\date{}
\def\tr{\qopname\relax o{tr}}
\theoremstyle{theorem}
\newtheorem{theo}{Theorem}[section]
\newtheorem{lemm}[theo]{Lemma}
\newtheorem{prop}[theo]{Proposition}
\newtheorem{coro}[theo]{Corollary}
\newtheorem{conj}[theo]{Conjecture}
\theoremstyle{remark}
\newtheorem{rema}[theo]{Remark}
\theoremstyle{definition}
\newtheorem{defi}[theo]{Definition}
\numberwithin{equation}{section}
\numberwithin{figure}{section}
\author {Ivan Dynnikov and Maxim Prasolov}
\address{\noindent V.A.Steklov Mathematical Institute of Russian Academy of Science, 8 Gubkina Str., Moscow 119991, Russia}
\email{dynnikov@mech.math.msu.su}
\email{0x00002a@gmail.com}
\thanks{The work is supported by the Russian Science Foundation under grant~22-11-00299.}
\title{An algorithm for comparing Legendrian knots}
\begin{document}
\maketitle

\begin{abstract}
We construct an algorithm to decide whether two given Legendrian or transverse
links are equivalent. In general, the complexity of the algorithm is too high for practical
implementation. However, in many cases, when the symmetry group of the link is
small and explicitly known, the most time-consuming part of the algorithm can be bypassed,
thus allowing one to compare many pairs of Legendrian and transverse links in practice.
\end{abstract}

\section{Introduction}

Legendrian knots have been intensely studied starting from groundbreaking work of D.\,Bennequin~\cite{ben}
where he used them to prove nonstandardness of a contact structure in~$\mathbb R^3$.
In the context of contact topology, it is natural to ask for a classification of
Legendrian links in contact three-manifolds, and the most fundamental case of such a manifold
is, of course, the three-sphere~$\mathbb S^3$ endowed with the standard contact structure.

Since any two Legendrian links having the same topological type are related by a sequence of
Legendrian stabilizations
and destabilizations (see~\cite{futa97}), and any sequence of consecutive destabilizations
terminates, the classification of Legendrian links of a fixed topological type amounts
to the description of the set of non-destabilizable Legendrian types within this topological type
and the relations between them via stabilizations and destabilizations.

For certain topological knot types in the three-sphere endowed with the standard contact structure,
such a classification is known. Namely, Ya.\,Eliashberg and M.\,Fraser
have shown that there is only one non-destabilizable Legendrian type representing the unknot~\cite{EF1,EF2}
(which is true for any tight contact manifold, not only for~$\mathbb S^3$).
J.\,Etnyre and K.\,Honda classified Legendrian torus knots and Legendrian figure eight
knots~\cite{etho2001};
J.\,Etnyre, L.\,Ng, and V.\,V\'ertesi did so for Legendrian twist knots~\cite{etngve2013}.
A number of partial classification results for composite and satellite Legendrian knots
are obtained in~\cite{etho2003,etlafato2012,etver2016}.

Legendrian link invariants of algebraic nature constructed in~\cite{che2002,CKESJ,El,fuchs2003,ng2003,ng2011,OST,PC2005}
yield more partial results by allowing to distinguish Legendrian link types in many cases. However,
in general, even for many knot types with small crossing number, the classification of Legendrain
types is unknown. In particular, the following general folklore conjecture is unsettled.

\begin{conj}\label{finitely-many-Leg-conj}
For any link type~$\mathscr L$ in~$\mathbb S^3$, there are only finitely many non-destabilizable
Legendrian link types having topological type~$\mathscr L$.
\end{conj}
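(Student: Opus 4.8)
The plan is to recast the conjecture as a \emph{uniform complexity bound} and then reduce it to a finite check. Represent Legendrian links by rectangular (grid) diagrams, so that Legendrian isotopy is generated by a finite list of elementary moves, stabilization is one distinguished move, and a type is non-destabilizable exactly when no destabilizing move applies to any of its diagrams. Attach to each diagram its complexity $c$ (the number of vertices, say) and let $c_{\min}(\Lambda)$ be the least complexity of a diagram representing a Legendrian type $\Lambda$. Since there are only finitely many rectangular diagrams of complexity at most $N$ for any fixed $N$, Conjecture~\ref{finitely-many-Leg-conj} would follow at once from a bound $c_{\min}(\Lambda)\le N(\mathscr L)$ valid for every non-destabilizable type $\Lambda$ of topological type $\mathscr L$. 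So the whole problem is to produce such an $N(\mathscr L)$.

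What the classical invariants give for free is only partial structure. The Thurston--Bennequin number is bounded above, $\mathrm{tb}(\Lambda)\le\mathrm{tb}_{\max}(\mathscr L)$, by the Bennequin and Kauffman/HOMFLY inequalities, and the Bennequin-type bound $\mathrm{tb}(\Lambda)+|\mathrm{rot}(\Lambda)|\le 2g(\mathscr L)-1$ ties the rotation number to $\mathrm{tb}$. These do not, however, bound $\mathrm{tb}$ from below, nor do they bound the number of non-destabilizable types sharing a fixed pair $(\mathrm{tb},\mathrm{rot})$. The guiding template is the unknot, where Dynnikov's monotonic simplification of rectangular diagrams forces $c_{\min}$ to be bounded---indeed produces a unique non-destabilizable type---and thereby recovers the Eliashberg--Fraser theorem. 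The strategy would be to seek a purely quantitative analogue of this: not that every diagram simplifies monotonically, but that \emph{some} minimal non-destabilizable representative has complexity controlled by a computable invariant of $\mathscr L$ (genus, crossing number, or the size of a chosen topological diagram).

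The hard part---and I expect it to be the decisive obstacle---is exactly the production of $N(\mathscr L)$, and it splits into two intertwined open difficulties, each of which the complexity bound would subsume. First, there is no known lower bound on $\mathrm{tb}$ among non-destabilizable representatives: a priori the ``mountain range'' of $\mathscr L$ could have summits at arbitrarily negative height, giving infinitely many types already at the coarse level of classical invariants. Second, knots that are not Legendrian simple carry several non-destabilizable types with identical $(\mathrm{tb},\mathrm{rot})$, and nothing known caps their number. Monotonic simplification fails for essentially every nontrivial knot, so the unknot argument does not transfer, and the classical invariants are provably too weak to close either gap. One could try to replace the crude vertex count by a finer invariant---the Chekanov--Eliashberg DGA with its augmentation variety, or the GRID/LOSS classes in knot Floer homology---and argue that only finitely many such invariants occur for non-destabilizable representatives of $\mathscr L$; but converting that into finiteness again needs a bound on the minimal complexity required to realize each invariant, which is precisely what is missing. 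In short, the comparison algorithm of the present paper decides equivalence of two \emph{given} Legendrian links, but a full classification, and with it this conjecture, additionally requires such a uniform bound, and I regard establishing it as a genuinely open problem rather than a routine extension of existing methods.
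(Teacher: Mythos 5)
The statement you were asked about is a \emph{conjecture}: the paper explicitly calls it an unsettled folklore conjecture and offers no proof of it, so there is nothing in the paper for your argument to be measured against. Your proposal correctly refrains from claiming a proof, and your reformulation --- finiteness would follow from a uniform bound $N(\mathscr L)$ on the minimal grid complexity of non-destabilizable representatives --- is essentially the same recasting the paper itself makes when it notes (via the commutation results of Dynnikov--Prasolov) that the conjecture is equivalent to the finiteness of non-simplifiable rectangular diagrams of type~$\mathscr L$. Your diagnosis of the obstacles (no lower bound on~$\mathrm{tb}$ over summits of the mountain range, no cap on the number of types sharing $(\mathrm{tb},\mathrm{rot})$, failure of monotonic simplification beyond the unknot) is accurate, and your conclusion that the problem remains open agrees with the paper.
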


The authors' interest in contact topology comes from the tight relation of Legendrian links
with rectangular diagrams of links (or grid diagrams) and the monotonic simplification approach to
the algorithmic recognition of links.
That some nice relation between the subjects exists was observed by Bill Menasco, who shared this observation
with the first present author as early as in 2003. However, only now we can state the exact form of this relation
in full detail.

We denote by~$\xi_+$ the standard contact structure on~$\mathbb S^3$, and
by~$\xi_-$ its mirror image.
With every rectangular diagram of a link~$R$ one naturally
associates two topologically equivalent Legendrian link types denoted~$\mathscr L_+(R)$
and~$\mathscr L_-(R)$, whose Legendrianness is with respect to~$\xi_+$ and~$\xi_-$, respectively.

Exchange moves of rectangular diagrams preserve both types~$\mathscr L_+$ and~$\mathscr L_-$,
whereas stabilizations and destabilizations preserve one of them and change the other
by a Legendrian stabilization or destabilization, respectively. We assign type~I to (de)stabilizations
that preserve~$\mathscr L_+$, and type~II to those that preserve~$\mathscr L_-$.

The easy part of the relation between Legendrian links and rectangular diagrams is the
one-to-one correspondence
between $\xi_+$-Legendrian (respectively, $\xi_-$-Legendrian)
link types and equivalence
classes of rectangular diagrams of links
that is induced by the map~$R\mapsto\mathscr L_+(R)$
(respectively, $R\mapsto\mathscr L_-(R)$), where the equivalence is generated by
exchange moves and type~I (respectively, type~II) stabilizations.

To state the difficult part, we consider the \emph{groupoid of links} in~$\mathbb S^3$
in which morphisms from a link~$L_1$ to a link~$L_2$ are defined as isotopy classes of
orientation preserving homeomorphisms~$(\mathbb S^3,L_1)\rightarrow(\mathbb S^3,L_2)$.
A pair of rectangular diagrams of links~$R_1$ and~$R_2$ accompanied with
a morphism from a link represented by~$R_1$ to the one represented by~$R_2$
is called \emph{a transformation} of rectangular diagrams. In particular, every elementary move
of rectangular diagrams is viewed as a transformation.
The difficult part of the relation between Legendrian links and rectangular diagrams
is the `commutation' property of type~I moves with type~II moves, which
can be formulated, slightly informally, as follows.

\begin{theo}\label{commute-th}
Any transformation~$R\rightsquigarrow R'$ of rectangular diagrams
represented by a sequence of elementary moves can be decomposed into
two transformations~$R\rightsquigarrow R''$ and~$R''\rightsquigarrow R'$
also represented by sequences of elementary moves so that
\begin{enumerate}
\item
the sequence of elementary moves representing~$R\rightsquigarrow R''$ \emph(respectively,
$R''\rightsquigarrow R'$\emph) does not
include type~II \emph(respectively, type~I\emph) stabilizations and destabilizations\emph;
\item
the Legendrian types~$\mathscr L_-$ and~$\mathscr L_+$ associated with the
considered diagrams undergo the same Legendrian stabilizations and destabilizations
during the sequence of elementary moves representing
transformations~$R\rightsquigarrow R''$ and~$R''\rightsquigarrow R'$, respectively,
as those occurring during the original sequence representing the transformation~$R\rightsquigarrow R'$.
\end{enumerate}
Moreover, in this construction, the diagram~$R''$ is uniquely defined up to exchange moves.
\end{theo}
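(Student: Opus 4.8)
The plan is to treat the given sequence of elementary moves as a word in the ``alphabet'' consisting of exchange moves and (de)stabilizations of types~I and~II, and to sort this word so that all type~I (de)stabilizations precede all type~II (de)stabilizations, inserting exchange moves as needed. Since exchange moves preserve both $\mathscr L_+$ and $\mathscr L_-$, they may be placed in either of the two resulting halves, so the only substantive task is to move each type~I (de)stabilization to the left past every type~II (de)stabilization that currently precedes it. The diagram $R''$ is then defined as the one occurring at the interface, after the last type~I (de)stabilization and before the first type~II (de)stabilization. Conditions~(1) and~(2) will then hold by construction: the first half contains no type~II (de)stabilizations and the second none of type~I, and because the sorting only transposes adjacent moves of opposite type and preserves the relative order of moves of the same type (it is a \emph{stable} sort), the Legendrian (de)stabilizations undergone by $\mathscr L_-$ along $R\rightsquigarrow R''$ and by $\mathscr L_+$ along $R''\rightsquigarrow R'$ are literally the same, in the same order, as in the original sequence.

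First I would establish two local commutation lemmas. The auxiliary one asserts that a single (de)stabilization of either type commutes with an adjacent exchange move, producing possibly a different exchange move (or a short chain of them) but the same (de)stabilization type and the same effect on the relevant Legendrian type; this is a direct inspection of the local grid pictures. The central lemma asserts that a type~II (de)stabilization immediately followed by a type~I (de)stabilization can be replaced by a type~I (de)stabilization, then exchange moves, then a type~II (de)stabilization \emph{of the same flavors}, realizing the same transformation of diagrams and the same pair of Legendrian (de)stabilizations. That the replacement uses exactly one move of each type is forced by condition~(2): a single type~I move contributes precisely one Legendrian (de)stabilization to $\mathscr L_-$, and this count must be preserved. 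When the two moves occur at disjoint locations the lemma is immediate, so the substance lies in the interacting cases, where the second move takes place at or next to the vertex created or removed by the first.

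I expect this interacting local analysis to be the main obstacle. The number of configurations is large, and must be split into subcases according to whether each move is a stabilization or a destabilization and according to the relative orientations (the NW/NE/SW/SE types) of the bends; in the worst cases the two (de)stabilizations cannot simply be transposed but must be routed past one another through a nontrivial chain of exchange moves, obtained by first separating the relevant rows and columns, and one must verify that each such chain preserves both the underlying morphism and the prescribed Legendrian (de)stabilizations. Granting the two lemmas, termination is clean: taking as complexity the number of \emph{inversions}, that is, pairs consisting of a type~II (de)stabilization preceding a type~I one, each application of the central lemma decreases this count by exactly one, while the auxiliary lemma lets (de)stabilizations slide past the newly inserted exchange moves without increasing it, so the bubble-sort procedure halts.

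Finally, for the uniqueness of $R''$ up to exchange moves I would argue by confluence. Any two diagrams $R''_1,R''_2$ produced by such decompositions are joined to $R$ by (exchange, type~I)-sequences and to $R'$ by (exchange, type~II)-sequences; composing these shows that $R''_1$ and $R''_2$ are related both by an (exchange, type~I)-sequence and by an (exchange, type~II)-sequence. Because type~I moves preserve $\mathscr L_+$ and type~II moves preserve $\mathscr L_-$, the two diagrams share both associated Legendrian types, so it suffices to prove that two diagrams related simultaneously by an (exchange, type~I)-sequence and by an (exchange, type~II)-sequence are already related by exchange moves alone; equivalently, that the pair $(\mathscr L_+,\mathscr L_-)$ determines the exchange class of a rectangular diagram. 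This is itself a confluence statement, which I would obtain from the same two local commutation lemmas together with the termination measure above by invoking Newman's lemma, the only additional work being the verification of local confluence, i.e. of the critical pairs arising when two commutation rewrites apply to overlapping portions of a sequence.
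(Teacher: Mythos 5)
Your plan founders on the ``central lemma''. The problematic configuration is a type~II \emph{stabilization} followed (possibly after exchanges) by a type~I \emph{destabilization} that consumes vertices created by that stabilization. This is exactly a \emph{bypass} in the sense of~\cite{bypasses}: the starting diagram~$R_1$ need not admit any type~I destabilization at all, so no local transposition ``(type~I move)(exchanges)(type~II move)'' with one move of each flavor exists at~$R_1$; the type~I destabilization only becomes available after a chain of exchange moves that is produced by a \emph{global} simplification of the bypass disc, not by ``separating the relevant rows and columns''. The assertion that such a chain always exists is the Key Lemma of~\cite{bypasses} (the result that yields the Jones conjecture), and the paper's proof of Theorem~\ref{r3-mor-th} does not attempt an adjacent-pair commutation: it inducts on the number of type~II moves, uses Proposition~2 of~\cite{bypasses} to drag the bypass disc~$d$ along the entire tail of exchanges and type~I moves, and only then invokes the Key Lemma to realize the type~II stabilization at the far end. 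A finite inspection of local grid pictures cannot substitute for this; the same obstruction also blocks your auxiliary lemma in the direction where a destabilization must be slid earlier past the exchange moves that enable it. (The harmless directions --- moving stabilizations earlier and destabilizations later --- are the paper's Lemmas~\ref{stab-related-by-exch-lem} and~\ref{stab-first-lem}, and they suffice only for Proposition~\ref{sym-no-stab-prop}, not for commuting type~I with type~II.)

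The uniqueness argument has a second, independent error: you reduce it to the claim that the pair~$(\mathscr L_+,\mathscr L_-)$ determines the exchange class of a rectangular diagram, and that claim is false. Corollary~\ref{count-coro} shows that the exchange classes with prescribed~$(\mathscr L_+,\mathscr L_-)$ are in bijection with~$\mathrm{Sym}_+(R_2)\backslash\mathrm{Mor}(R_1,R_2)/\mathrm{Sym}_-(R_1)$, which is typically not a singleton; the tables in Section~\ref{example-sec} (e.g.\ for~$7_7$) exhibit several distinct exchange classes sharing both Legendrian types. The correct statement, Theorem~\ref{I+II=>exch-th}, requires in addition that the (exchange, type~I)-chain and the (exchange, type~II)-chain joining the two diagrams induce the \emph{same morphism}; your confluence argument drops this hypothesis, and the hypothesis is also what makes the statement resistant to a Newman-lemma proof --- the paper establishes it by a global argument comparing two surface embeddings~$\phi_+$ and~$\phi_-$ following~\cite{dysha18}, not by checking critical pairs. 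Finally, even after Theorems~\ref{r3-mor-th} and~\ref{I+II=>exch-th} are granted, condition~(2) still needs the leap decomposition and Lemma~\ref{leap-lem} to certify that each intermediate transformation in the reordered chain is again a single Legendrian (de)stabilization; your appeal to the sort being ``stable'' presupposes the move-by-move commutation that is unavailable.
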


To prove Theorem~\ref{commute-th} we use the technique of~\cite{dp2021,dysha2023}.
This is done in Section~\ref{commute-sec} (where the precise formulation is also given).

A weaker version of the `commutation' of type~I moves with type~II moves
was established in~\cite{bypasses}, where
it allowed, among other things, to prove the so called Jones conjecture. It
also implied that Conjecture~\ref{finitely-many-Leg-conj} is equivalent to the following one.

\begin{conj}
For any link type~$\mathscr L$, there are only finitely many non-simplifiable rectangular
diagrams representing~$\mathscr L$.
\end{conj}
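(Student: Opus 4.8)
The plan is to attack the finiteness assertion directly, through a bound on the \emph{complexity} of non-simplifiable diagrams rather than through any translation into Legendrian language. Let $c(R)$ denote the complexity of a rectangular diagram $R$ (the number of its vertical, equivalently horizontal, edges). For each fixed value of $c$ there are only finitely many rectangular diagrams, so the conjecture is equivalent to the single quantitative statement that \emph{the complexity of non-simplifiable diagrams representing a fixed link type~$\mathscr L$ is bounded above.} Everything below is organized around producing such a bound.

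To bound the complexity I would read off an exact identity from the effect of the elementary moves recalled in the introduction. A type~I destabilization lowers $c(R)$ by one and raises $\mathrm{tb}(\mathscr L_-(R))$ by one; a type~II destabilization lowers $c(R)$ by one and raises $\mathrm{tb}(\mathscr L_+(R))$ by one; exchange moves leave all three quantities fixed. Hence the combination
\[
c(R)+\mathrm{tb}\bigl(\mathscr L_+(R)\bigr)+\mathrm{tb}\bigl(\mathscr L_-(R)\bigr)
\]
is unchanged by every elementary move, so it equals a constant $K(\mathscr L)$ depending only on the link type, and
\[
c(R)=K(\mathscr L)-\mathrm{tb}\bigl(\mathscr L_+(R)\bigr)-\mathrm{tb}\bigl(\mathscr L_-(R)\bigr).
\]
A diagram is non-simplifiable exactly when no destabilization is available even after exchange moves, which means that both $\mathscr L_+(R)$ and $\mathscr L_-(R)$ are non-destabilizable Legendrian types. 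Thus bounding $c(R)$ from above on non-simplifiable diagrams is identical to bounding $\mathrm{tb}(\mathscr L_+(R))+\mathrm{tb}(\mathscr L_-(R))$ from \emph{below} over such types.

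The main obstacle, and the reason the statement is still only a conjecture, sits precisely here. Every known estimate on the Thurston--Bennequin number controls it from \emph{above}: the Bennequin and slice--Bennequin inequalities, the HOMFLY and Kauffman polynomial bounds, and the grading bounds attached to the Legendrian classes in knot and grid Floer homology. Through the identity above these yield the already-known \emph{lower} bound on complexity, namely the arc index, but they say nothing about how negative $\mathrm{tb}$ may become for a representative that is nevertheless non-destabilizable. Excluding non-destabilizable representatives of arbitrarily negative $\mathrm{tb}$ is exactly the content of Conjecture~\ref{finitely-many-Leg-conj}, for which no general mechanism is presently known.

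The direction I would nonetheless pursue is to extract a lower bound on $\mathrm{tb}$ from non-vanishing: a non-destabilizable representative supports a nonzero $\mathscr L_\pm$-invariant in grid/Floer homology, and the aim would be to confine the gradings carrying such a class to a window determined by~$\mathscr L$. Here Theorem~\ref{commute-th} should be genuinely useful, since it decouples the two destabilization directions and reduces the simultaneous control of $\mathscr L_+$ and $\mathscr L_-$ to two one-parameter problems. The hard part, which I do not expect these tools to settle on their own, is to convert mere non-destabilizability into a \emph{uniform} lower bound on $\mathrm{tb}$; without it the complexity of non-simplifiable diagrams cannot yet be controlled and the conjecture remains open.
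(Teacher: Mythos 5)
The statement you were asked to prove is labeled a \emph{conjecture} in the paper, and the paper offers no proof of it: it only records (following~\cite{bypasses}) that it is equivalent to Conjecture~\ref{finitely-many-Leg-conj}, the finiteness conjecture for non-destabilizable Legendrian types. So the honest verdict on your proposal is that it is not a proof and could not have been one, and to your credit you say so explicitly. What you have actually produced is a correct derivation of that same equivalence. Your conservation law is right, and in fact sharper than you state: since a type~I (respectively, type~II) destabilization drops the grid complexity by one while performing a Legendrian destabilization on~$\mathscr L_-$ (respectively,~$\mathscr L_+$), raising the corresponding~$\mathrm{tb}$ by one, the quantity $c(R)+\mathrm{tb}(\mathscr L_+(R))+\mathrm{tb}(\mathscr L_-(R))$ is invariant under all elementary moves, and a check on the $2\times2$ unknot diagram shows the constant is~$0$ for every link type, recovering the known grid identity $\mathrm{tb}(\mathscr L_+(R))+\mathrm{tb}(\mathscr L_-(R))=-c(R)$. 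Given this, finiteness of non-simplifiable diagrams is indeed the same as a uniform lower bound on $\mathrm{tb}$ over non-destabilizable representatives, i.e.\ Conjecture~\ref{finitely-many-Leg-conj}.

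One step in your argument deserves a flag, because it silently uses the deepest available input. You assert that non-simplifiability of~$R$ means both $\mathscr L_+(R)$ and $\mathscr L_-(R)$ are non-destabilizable Legendrian types. One direction is immediate from the definitions, but the converse direction --- that if, say, $\mathscr L_+(R)$ admits a Legendrian destabilization, then $R$ itself admits a type~II destabilization after exchange moves \emph{only}, with no intervening stabilizations --- is precisely the nontrivial commutation phenomenon of~\cite{bypasses} (upgraded in Theorem~\ref{commute-th}); without it, a Legendrian destabilization of~$\mathscr L_+(R)$ is only known to be realizable after first performing type~I stabilizations on~$R$, which a simplification forbids. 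With that citation supplied, your reduction is complete and matches the paper's own remark verbatim; the remaining gap --- converting non-destabilizability into a uniform lower bound on~$\mathrm{tb}$, say via non-vanishing of the grid/Floer $\lambda^\pm$-classes in a controlled grading window --- is exactly the open problem, and you correctly identify that neither the upper-bound-type $\mathrm{tb}$ estimates nor Theorem~\ref{commute-th} alone can close it.
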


`Non-simplifiable' here means that the diagram does not admit a \emph{simplification},
where by a simplification we mean a sequence of elementary moves
including at least one destabilization and not including stabilizations.
Since deciding whether a given rectangular diagram admits a simplification is algorithmic,
and any sequence of simplifications terminates, the validity of the conjectures
above would mean that the monotonic simplification approach of~\cite{dyn06}
could be extended from the unknot to general links in a reasonable way.

To do so, the set of non-simplifiable rectangular diagrams has to be studied.
The above mentioned relation between rectangular diagrams and Legendrian links
allows us to fully describe the set of \emph{exchange classes}
of rectangular diagrams (i.\,e.\ rectangular
diagrams viewed up to exchange moves) in terms of Legendrian link types
and their symmetry groups. To confine to the case of
non-simplifiable rectangular diagrams, only Legendrian link types that
do not admit destabilizations should be taken into account.

For instance, the classification of Legendrian trefoils and Figure Eight knots
in~\cite{etho2001} can be used to show that there are exactly two
combinatorial types (obtained from one another by an orientation flip)
of non-simplifiable rectangular diagrams representing
each of these topological knot types.

In the present paper we use the connection between Legendrian links and rectangular diagrams
in the opposite direction. Namely, we prove the following statement, which is the main result
of this work.

\begin{theo}\label{main-th}
There exists an algorithm that, given two Rectangular diagrams of links~$R_1$ and~$R_2$,
decides whether or not~$\mathscr L_+(R_1)=\mathscr L_+(R_2)$.
\end{theo}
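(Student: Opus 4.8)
The plan is to reduce the decision of whether $\mathscr L_+(R_1)=\mathscr L_+(R_2)$ to a combination of sub-tasks that are individually algorithmic, the hard one being an enumeration governed by the symmetry group of the underlying link. By the correspondence recalled in the introduction, $\mathscr L_+(R_1)=\mathscr L_+(R_2)$ holds exactly when $R_1$ and $R_2$ are connected by a sequence of exchange moves and type~I stabilizations and destabilizations. The first step is to use Theorem~\ref{commute-th} to recast this as a statement about a \emph{canonical middle diagram}. If $\mathscr L_+(R_1)=\mathscr L_+(R_2)$, then a pure type~I sequence joining $R_1$ to $R_2$ is, in particular, a transformation containing no type~II (de)stabilizations; its decomposition may be taken with empty second leg, so by the uniqueness clause of Theorem~\ref{commute-th} the (unique up to exchange) middle diagram $R''$ is exchange-equivalent to $R_2$. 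Conversely, whenever a transformation $R_1\rightsquigarrow R_2$ has its canonical $R''$ exchange-equivalent to $R_2$, the first leg $R_1\rightsquigarrow R''$ is free of type~II (de)stabilizations, hence built only from exchange moves and type~I (de)stabilizations, all of which preserve $\mathscr L_+$; thus $\mathscr L_+(R_1)=\mathscr L_+(R'')=\mathscr L_+(R_2)$. I would therefore prove the equivalence
\[
\mathscr L_+(R_1)=\mathscr L_+(R_2)\iff \text{some transformation } R_1\rightsquigarrow R_2 \text{ has canonical middle exchange-equivalent to } R_2 .
\]

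Next I would verify that each ingredient on the right-hand side is effectively computable. A necessary condition is that $R_1$ and $R_2$ represent the same topological link type; this is algorithmically decidable, and if it fails the answer is ``no''. Exchange-equivalence of two diagrams is decidable because exchange moves preserve the grid number, so the exchange class of $R_2$ is a finite set generated by exhaustively applying exchange moves, and one simply tests membership of $R''$ in it. Finally, the decomposition furnished by Theorem~\ref{commute-th} is constructive, being obtained by the technique of~\cite{dp2021,dysha2023}, so from any given sequence of elementary moves realizing a transformation one can explicitly compute $R''$ and then test whether $R''\sim_{\mathrm{ex}}R_2$.

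It remains to make sense of the existential quantifier over transformations. The transformations $R_1\rightsquigarrow R_2$ form a torsor over the symmetry group (mapping class group) of the common link type: for each morphism one searches, among the enumerable sequences of elementary moves joining $R_1$ to $R_2$, for one inducing that morphism, then applies the constructive decomposition and tests exchange-equivalence of the resulting $R''$ with $R_2$. When the symmetry group is finite, one runs this procedure over a complete set of coset representatives and answers ``yes'' if and only if some representative produces $R''\sim_{\mathrm{ex}}R_2$.

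The hard part is precisely this last step: computing the symmetry group of the link and, when it is infinite, cutting the a~priori infinite family of morphisms down to a finite, effectively testable subfamily. I expect the main obstacle to be proving that only finitely many morphisms can yield a canonical middle lying in the (finite) exchange class of $R_2$ --- equivalently, bounding the complexity of the relevant $R''$ --- so that the enumeration terminates unconditionally. This is the computationally expensive component flagged in the abstract, and it is exactly the part that can be bypassed when the symmetry group is small and explicitly known, since a finite list of morphisms is then available from the outset.
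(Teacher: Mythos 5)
Your reduction captures the right structural picture (the middle diagram of Theorem~\ref{commute-th}, the finiteness of exchange classes, the symmetry group as the source of the difficulty), but the proposal stops exactly where the proof has to begin: you leave unresolved how to make the existential quantifier over transformations effective, and you say so yourself. Worse, the finiteness you hope for --- ``only finitely many morphisms can yield a canonical middle lying in the exchange class of $R_2$'' --- is not the true statement. By Corollary~\ref{count-coro} the set of morphisms producing a given exchange class as middle diagram is a full double coset $\mathrm{Sym}_+(R_2)\,\eta\,\mathrm{Sym}_-(R_1)$, which is infinite whenever these subgroups are; and the set of double cosets itself, i.e.\ $\Lambda(R_1,R_2)$, need not be finite either when $\mathrm{Sym}$ is infinite. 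So bounding the complexity of the relevant $R''$ would not by itself terminate your enumeration, and no procedure for listing double coset representatives is supplied.

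The paper's algorithm avoids enumerating morphisms altogether. It first finds (by parallel enumeration, using Theorem~\ref{r3-th}) a diagram $R_3$ with $\mathscr L_+(R_3)=\mathscr L_+(R_1)$ and $\mathscr L_-(R_3)=\mathscr L_-(R_2)$; then it computes, via Theorem~\ref{find-gen-th} (the genuinely hard ingredient, proved by reworking Matveev's Haken-manifold machinery), a finite set of chains of elementary moves generating $\mathrm{Sym}(R_3)$. Proposition~\ref{sym-no-stab-prop} then shows that after performing on $R_3$ (and correspondingly on $R_2$) enough type~I stabilizations --- namely the maxima of the type~I stabilization counts occurring in those generating chains --- \emph{every} element of the symmetry group of the stabilized diagram is induced by a chain with no type~I (de)stabilizations. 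This is what makes Lemma~\ref{final-lem} work: given a type-II-free chain certifying $\mathscr L_+(R_3')=\mathscr L_+(R_2')$, one corrects the type-I-free chain certifying $\mathscr L_-(R_3')=\mathscr L_-(R_2')$ by a symmetry so that the two chains induce the \emph{same} morphism, and Theorem~\ref{I+II=>exch-th} then forces $R_3'$ and $R_2'$ to be exchange-equivalent. The whole decision collapses to a single finite exchange-class search, with no quantification over $\mathrm{Mor}(R_1,R_2)$. To repair your argument you would need to supply both the algorithm for generators of the symmetry group and the stabilization trick that converts ``some morphism works'' into ``one fixed comparison suffices.''
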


All combinatorial types of
rectangular diagrams having fixed complexity (where the complexity
is defined as the number of vertices) can be searched in finite time,
so the main difficulty with applying the approach mentioned above comes
from the fact that the symmetry groups of the link and of the Legendrian types in question are involved in the formulation
of the classification of exchange classes representing these Legendrian types,
and these symmetry groups are generally unknown.

To overcome
this difficulty we show that we can cope with knowing just
generating sets of those groups without further investigation of the respective group structures,
and there is an algorithm to find a generating set in terms of sequences of elementary moves.

The method of~\cite{trleg} allows to extend the present approach to transverse links and
to prove that their equivalence is also decidable.

The rest of the paper is organized as follows.
In Section~\ref{links-and-diagrams-sec}, we give the basic definitions used in the formulation of our main result.
Further definitions and auxiliary statements are given in Section~\ref{moves-sec}.
Section~\ref{commute-sec} is devoted to various results about
commutation of transformations of rectangular diagrams.
In Section~\ref{algorithm-sec}, we describe an algorithm for comparing Legendrian links,
and thus prove our main result modulo Theorem~\ref{find-gen-th}, whose proof
is given in Section~\ref{generators-sec}.
In Section~\ref{transverse-sec}, we extend the present approach to transverse
links. Finally, Section~\ref{example-sec} demonstrates several applications
of our approach to distinguishing Legendrian knot types that have not been previously proven
to be inequivalent.

Our proofs rest heavily on several previous works that contain weaker statements than
those we want to use, though the respective techniques allow to establish the stronger
result with no or very little additional argumentation.
In such cases, we do not rewrite the proof completely, which would make the paper
unreasonably long, but provide a `patch' to the original proof. This situation occurs in
the proofs of our Theorems~\ref{repr-by-elem-move-th}, \ref{r3-mor-th}, \ref{I+II=>exch-th},
and~\ref{fertile-alg-exist-th}.

\section{Legendrian links and rectangular diagrams}\label{links-and-diagrams-sec}

By~$\mathbb S^3$ we denote the unit $3$-sphere in~$\mathbb R^4$.
By \emph{a link} in~$\mathbb S^3$ we mean a compact oriented (smooth or PL-) $1$-dimensional
submanifold of~$\mathbb S^3$ whose connected components are numbered. If~$L$ is a link, and~$f:\mathbb S^3\rightarrow\mathbb S^3$ is a (smooth or PL-) homeomorphism, then it is understood that the link~$f(L)$ inherits
the orientation and numbering of components from~$L$.

\begin{defi}
Let~$\xi$ be a \emph{cooriented contact structure} in the three-sphere~$\mathbb S^3$, that is, a smooth cooriented $2$-plane
distribution that locally has the form~$\ker\alpha$, where~$\alpha$ is a differential $1$-form
such that~$\alpha\wedge d\alpha$ does not vanish.

A smooth link in~$\mathbb S^3$ is called \emph{$\xi$-Legendrian} if it is tangent to~$\xi$
at every point.
Two smooth $\xi$-Legendrian links~$L$ and~$L'$ are said to be \emph{equivalent} (or \emph{$\xi$-Legendrian isotopic}) if
there is a smooth isotopy from~$L$ to~$L'$ through regularly parametrized
$\xi$-Legendrian links. If~$\xi$
is standard (see below), then this is equivalent to saying that there is a
diffeomorphism~$\varphi:\mathbb S^3\rightarrow\mathbb S^3$ preserving~$\xi$ such that~$\varphi(L)=L'$
(see, for instance, \cite{Ge}).
\end{defi}

The concepts of a Legendrian link and Legendrian isotopy can be extended naturally to piecewise smooth links
satisfying the restriction that, at every breaking point, the angle between the one-sided tangent lines
to the curve is non-zero. We call such piecewise smooth links \emph{cusp-free}.
This extension is done in such a way that every Legendrian isotopy class of piecewise smooth
Legendrian links is an extension of a unique Legendrian isotopy class of smooth Legendrian links.

This extension is defined in~\cite{EF1,EF2,OP} via a construction called \emph{standard smoothing}.
We do it in a different but equivalent way as follows.

\begin{defi}\label{pl-leg-def}
A cusp-free piecewise smooth link~$L\subset\mathbb S^3$ is called \emph{$\xi$-Legendrian}
if it is a union of smooth arcs each of which is tangent to~$\xi$ at every point.
Two Legendrian piecewise smooth links~$L$ and~$L'$ are \emph{equivalent}
(or \emph{Legendrian isotopic}) if there is a piecewise smooth
map~$F:L\times[0;1]\times[0;1]\rightarrow\mathbb S^3$ such that
\begin{enumerate}
\item
$F(\bullet,0,0)=\mathrm{id}_L$;
\item
$F(L\times\{0\}\times\{1\})=L'$;
\item
$F(L\times\{0\}\times\{t\})$ is a piecewise smooth Legendrian link for all~$t\in[0;1]$;
\item
$F(\bullet,\bullet,t)$ is an embedding~$L\times[0;1]\rightarrow\mathbb S^3$ for all~$t\in[0;1]$;
\item
$F(\{x\}\times[0;1]\times\{t\})$ is a smooth arc transverse to~$\xi$ for all~$x\in L$, $t\in[0;1]$.
\end{enumerate}
\end{defi}

We mostly follow the settings and notation of~\cite{dp17,dp2021,dysha2023}.
We deal with two contact structures on~$\mathbb S^3$, the standard one, denoted~$\xi_+$,
and its mirror image~$\xi_-$.
To define them, we identify~$\mathbb S^3$ with the group~$\mathrm{SU}(2)$ in the standard way and use the following
parametrization of this group:
$$(\theta,\varphi,\tau)\mapsto\begin{pmatrix}\cos(\pi\tau/2)e^{\mathbbm i\varphi}&\sin(\pi\tau/2)e^{\mathbbm i\theta}\\
-\sin(\pi\tau/2)e^{-\mathbbm i\theta}&\cos(\pi\tau/2)e^{-\mathbbm i\varphi}\end{pmatrix},$$
where~$(\theta,\varphi,\tau)\in(\mathbb R/(2\pi\mathbb Z))\times(\mathbb R/(2\pi\mathbb Z))\times[0;1]$.
The coordinate system~$(\theta,\varphi,\tau)$ can also be viewed as the one coming from the join
construction~$\mathbb S^3\cong\mathbb S^1*\mathbb S^1$, with~$\theta$ the coordinate
on~$\mathbb S^1_{\tau=1}$, and~$\varphi$ on~$\mathbb S^1_{\tau=0}$. This coordinate
system will be used throughout the paper.

We define \emph{the standard contact structure}~$\xi_+$ as~$\ker\alpha_+$,
where~$\alpha_+$ is the following right-invariant
$1$-form on~$\mathbb S^3\cong\mathrm{SU}(2)$:
\begin{equation}\label{alpha+-eq}
\alpha_+(X)=\frac12\tr\left(X^{-1}\begin{pmatrix}\mathbbm i&0\\0&-\mathbbm i\end{pmatrix}dX\right)=
\sin^2\Bigl(\frac{\pi\tau}2\Bigr)\,d\theta+\cos^2\Bigl(\frac{\pi\tau}2\Bigr)\,d\varphi.
\end{equation}
Similarly, the mirror image~$\xi_-$ of the standard contact structure is~$\ker\alpha_-$,
where
\begin{equation}\label{alpha--eq}
\alpha_-(X)=\sin^2\Bigl(\frac{\pi\tau}2\Bigr)\,d\theta-\cos^2\Bigl(\frac{\pi\tau}2\Bigr)\,d\varphi.
\end{equation}

We denote by~$\mathbb T^2$ the two-dimensional torus~$\mathbb T^2=\mathbb S^1\times\mathbb S^1$,
and by~$\theta$ and~$\varphi$ the angular coordinates on the first and the second~$\mathbb S^1$-factor, respectively.

\begin{defi}
\emph{An oriented rectangular diagram of a link} is a finite subset~$R\subset\mathbb T^2$
with an assignment of~`$+$' or~`$-$' to every point in~$R$ (this assignment is referred to as \emph{an orientation})
such that every meridian~$\{\theta\}\times\mathbb S^1$
and every longitude~$\mathbb S^1\times\{\varphi\}$ contains either no or exactly two points from~$R$,
and in the latter case one of the points is assigned~`$+$' and the other~`$-$'.
The points in~$R$ are then called \emph{vertices} of~$R$, and the pairs~$\{u,v\}\subset R$
such that~$\theta(u)=\theta(v)$ (respectively, $\varphi(u)=\varphi(v)$) are called \emph{vertical edges}
(respectively, \emph{horizontal edges}) of~$R$.

Any minimal non-empty subset of~$R$ that forms a rectangular diagram of a link
is called \emph{a connected component of~$R$}.

In what follows, by a rectangular diagram of a link we always mean an \emph{oriented}
rectangular diagram of a link whose connected components are \emph{numbered}.
\end{defi}

With every rectangular diagram of a link~$R$ we associate a link,
denoted~$\widehat R$, in~$\mathbb S^3$
as follows. For any point~$v\in \mathbb T^2$ denote by~$\widehat v$ the image of the
arc~$v\times[0;1]$ in~$\mathbb S^3\cong\mathbb S^1*\mathbb S^1=(\mathbb T^2\times[0;1])/{\sim}$
oriented from~$0$ to~$1$ if~$v$ is assigned~`$+$', and from~$1$ to~$0$ otherwise.
The link~$\widehat R$ is, by definition, the union~$\bigcup_{v\in R}\widehat v$.
The connected components of~$\widehat R$ have the form~$\bigcup_{v\in R'}\widehat v$
with~$R'$ a connected component of~$R$. Thus, their numbering is naturally inherited from
the numbering of connected components of~$R$.

To have a visual presentation of~$\widehat R$ it is useful to observe that
a planar diagram of a link in~$\mathbb R^3$ equivalent to~$\widehat R$ can be obtained as follows.
Cut the torus~$\mathbb T^2$ along a meridian and a longitude not passing through a vertex of~$R$
to get a square. For every edge~$\{u,v\}$ of~$R$ join~$u$ and~$v$ by a straight
line segment, and let vertical segments overpass horizontal ones at every crossing point.
Vertical edges are oriented from `$+$' to~`$-$', and the horizontal ones from~`$-$' to~`$+$', see Figure~\ref{rdiagram-fig}.
\begin{figure}[ht]
\includegraphics[scale=.3]{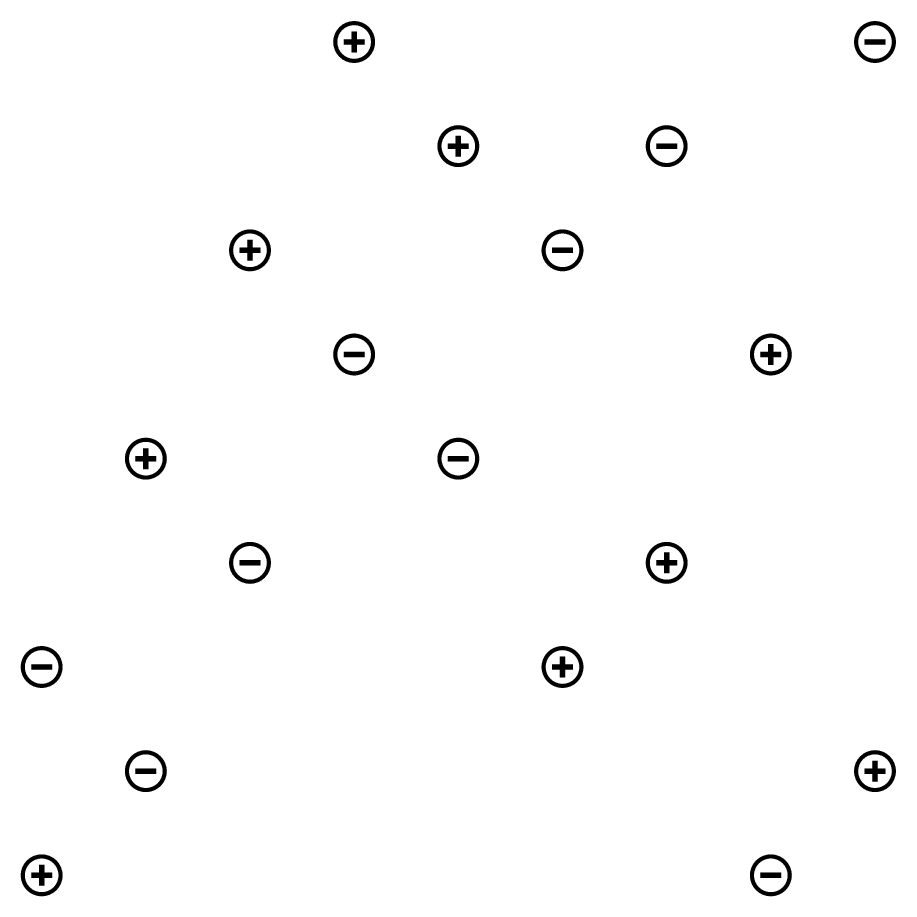}\hskip2cm
\includegraphics[scale=.3]{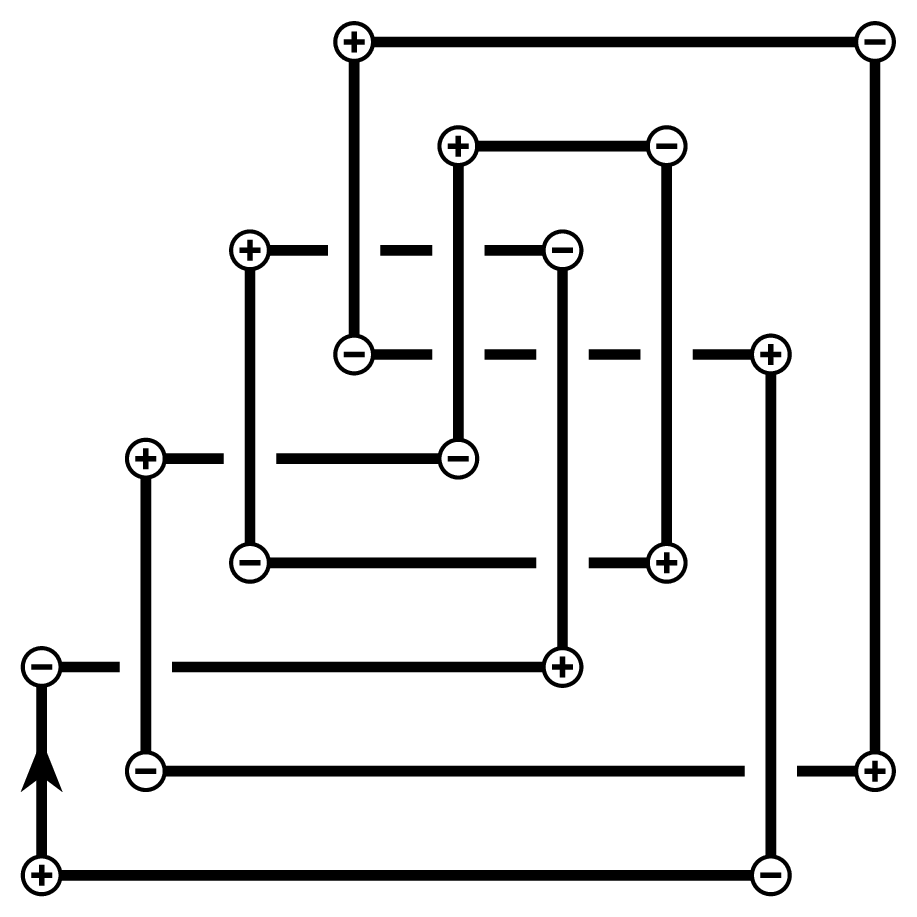}
\caption{A rectangular diagram of a knot and the corresponding planar diagram}\label{rdiagram-fig}
\end{figure}

One can see that every link  of the form~$\widehat R$, where~$R$ is a rectangular diagram,
is $\xi_+$-Legendrian and~$\xi_-$-Legendrian simultaneously. We denote by~$\mathscr L_+(R)$
and~$\mathscr L_-(R)$ the respective $\xi_\pm$-Legendrian isotopy classes of links.

It is known (see~\cite{ngth,OST}) that any $\xi_+$-Legendrian (respectively, $\xi_-$-Legendrian)
isotopy class of links has the form~$\mathscr L_+(R)$ (respectively, $\mathscr L_-(R)$)
for some rectangular diagram of a link~$R$.

One can see that the classes~$\mathscr L_\pm(R)$ are determined by the \emph{combinatorial type} of~$R$,
which is defined as the equivalence class of~$R$ with respect to the following
equivalence relation.

\begin{defi}
Two rectangular diagrams of a link~$R_1$ and~$R_2$ are \emph{combinatorially equivalent}
if one can be taken to the other by a homeomorphism of~$\mathbb T^2$
of the form~$(\theta,\varphi)\mapsto(f(\theta),g(\varphi))$, where~$f$ and~$g$
are orientation-preserving self-homeomorphisms of~$\mathbb S^1$.
\end{defi}

Clearly, the combinatorial type of any rectangular diagram can be presented in combinatorial
terms. For instance, in each class of combinatorially equivalent rectangular diagrams, one
can always pick a diagram whose vertices have coordinates that are rational multiples of~$\pi$.
When we speak about algorithms (in particular, in Theorem~\ref{main-th}) we
assume that some combinatorial way to represent rectangular diagrams
has been fixed.

\section{Moves, morphisms and symmetries}\label{moves-sec}

For two distinct points~$x,y\in\mathbb S^1$ we denote by~$[x;y]$ the arc of~$\mathbb S^1$ such that,
with respect to the standard orientation of~$\mathbb S^1$, it has the starting point at~$x$,
and the end point at~$y$. Accordingly, the interior of this arc is denoted by~$(x;y)$.

\begin{defi}\label{moves-def}
Let~$R_1$ and~$R_2$ be rectangular diagrams of a knot such that,
for some~$\theta_1,\theta_2,\varphi_1,\varphi_2\in\mathbb S^1$, the following holds:
\begin{enumerate}
\item
$\theta_1\ne\theta_2$, $\varphi_1\ne\varphi_2$;
\item
the symmetric difference~$R_1\triangle R_2$ is~$\{\theta_1,\theta_2\}\times\{\varphi_1,\varphi_2\}$;
\item
$R_1\triangle R_2$ contains an edge of one of the diagrams~$R_1$, $R_2$;
\item
none of~$R_1$ and~$R_2$ is a subset of the other;
\item
the intersection of the rectangle~$[\theta_1;\theta_2]\times[\varphi_1;\varphi_2]$
with~$R_1\cup R_2$ consists of its vertices, that is, $\{\theta_1,\theta_2\}\times\{\varphi_1,\varphi_2\}$;
\item
if~$v\in R_1\cap R_2$, then the orientation of~$v$ and the number of the component containing~$v$
are the same for~$R_1$ and~$R_2$.
\end{enumerate}
Then we say that the passage~$R_1\mapsto R_2$ is \emph{an elementary move},
and it is said to be \emph{associated} with the rectangle~$[\theta_1;\theta_2]\times[\varphi_1;\varphi_2]$.

If, additionally, the open annuli~$(\theta_1;\theta_2)\times\mathbb S^1$
and~$\mathbb S^1\times(\varphi_1;\varphi_2)$ are disjoint from~$R_1$ and~$R_2$,
then we say that the elementary move~$R_1\mapsto R_2$ is \emph{local}.

An elementary move~$R_1\mapsto R_2$ is called:
\begin{itemize}
\item
\emph{an exchange move} if~$|R_1|=|R_2|$,
\item
\emph{a stabilization move} if~$|R_2|=|R_1|+2$, and
\item
\emph{a destabilization move} if~$|R_2|=|R_1|-2$,
\end{itemize}
where~$|R|$ denotes the number of vertices of~$R$.
\end{defi}

We distinguish two \emph{types} and four \emph{oriented types} of stabilizations and destabilizations as follows.

\begin{defi}
Let~$R_1\mapsto R_2$ be a stabilization, and let~$\theta_1,\theta_2,\varphi_1,\varphi_2$ be as in Definition~\ref{moves-def}.
Denote by~$V$ the set of vertices of the rectangle~$[\theta_1;\theta_2]\times[\varphi_1;\varphi_2]$.
We say that the stabilization~$R_1\mapsto R_2$ and the destabilization~$R_2\mapsto R_1$
are of \emph{type~\rm I} (respectively, of \emph{type~\rm II}) if
$R_1\cap V\subset\{(\theta_1,\varphi_1),(\theta_2,\varphi_2)\}$
(respectively, $R_1\cap V\subset\{(\theta_1,\varphi_2),(\theta_2,\varphi_1)\}$).

Let~$\varphi_0\in\{\varphi_1,\varphi_2\}$ be such that~$\{\theta_1,\theta_2\}\times\{\varphi_0\}\subset R_2$.
The stabilization~$R_1\mapsto R_2$ and the destabilization~$R_2\mapsto R_1$
are of \emph{oriented type~$\overrightarrow{\mathrm I}$}
(respectively, of \emph{oriented type~$\overrightarrow{\mathrm{II}}$}) if they are of type~I (respectively, of type~II)
and~$(\theta_2,\varphi_0)$ is a positive vertex of~$R_2$.
The stabilization~$R_1\mapsto R_2$ and the destabilization~$R_2\mapsto R_1$
are of \emph{oriented type~$\overleftarrow{\mathrm I}$}
(respectively, of \emph{oriented type~$\overleftarrow{\mathrm{II}}$}) if they are of type~I (respectively, of type~II)
and~$(\theta_2,\varphi_0)$ is a negative vertex of~$R_2$. All these are illustrated in Figure~\ref{oriented-types-fig}.
\end{defi}

\begin{figure}[ht]
\centerline{\includegraphics[scale=.4]{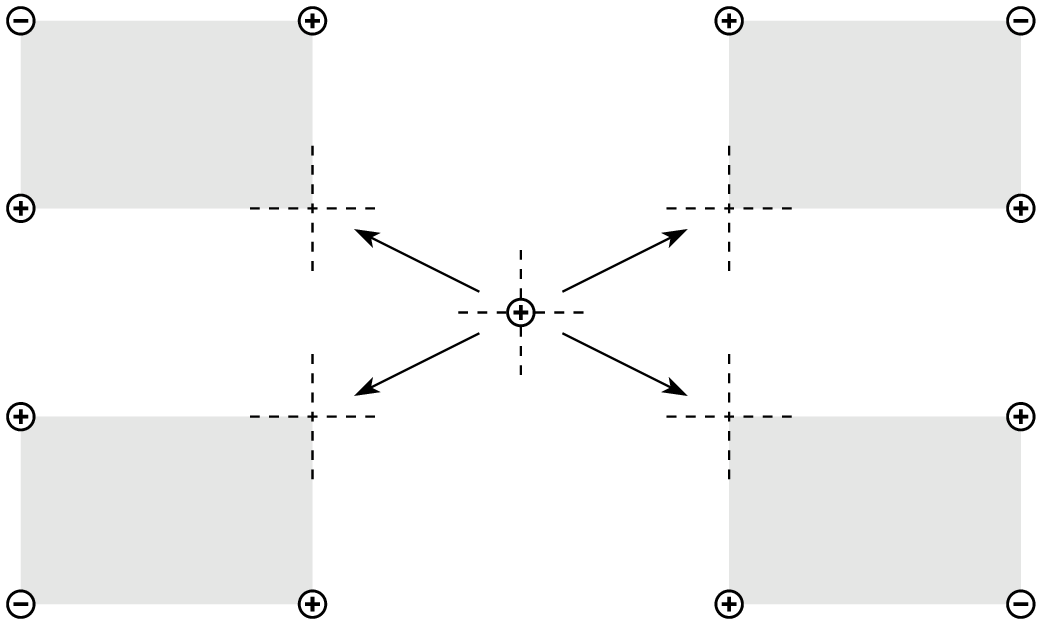}
\put(-88,72){$\scriptstyle\overleftarrow{\mathrm I}$}
\put(-118,72){$\scriptstyle\overrightarrow{\mathrm{II}}$}
\put(-88,42){$\scriptstyle\overleftarrow{\mathrm{II}}$}
\put(-118,42){$\scriptstyle\overrightarrow{\mathrm I}$}
\hskip1cm
\includegraphics[scale=.4]{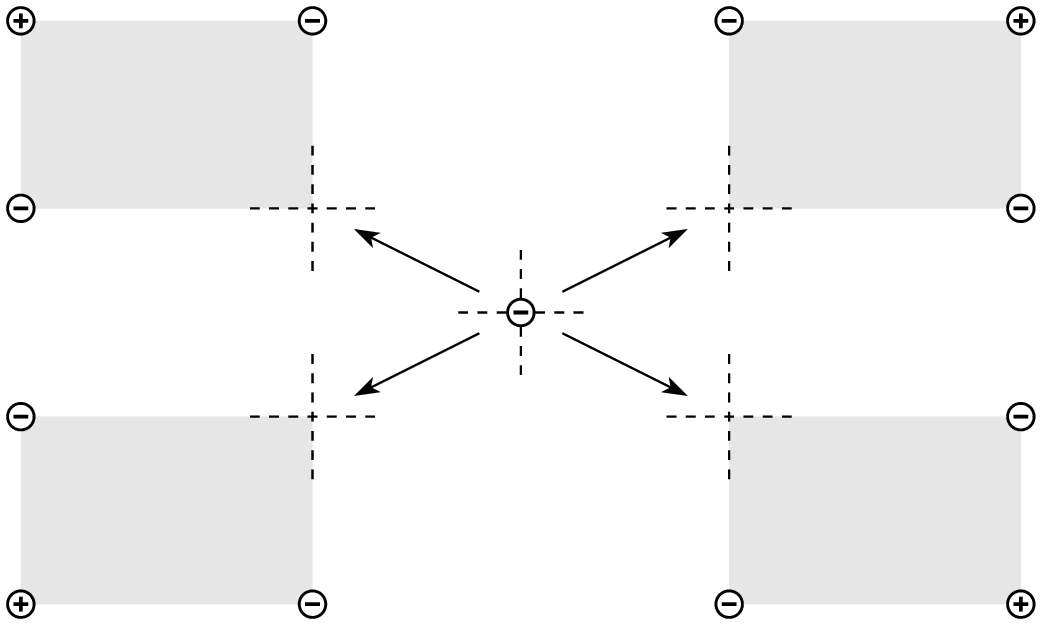}
\put(-88,72){$\scriptstyle\overrightarrow{\mathrm I}$}
\put(-118,72){$\scriptstyle\overleftarrow{\mathrm{II}}$}
\put(-88,42){$\scriptstyle\overrightarrow{\mathrm{II}}$}
\put(-118,42){$\scriptstyle\overleftarrow{\mathrm I}$}
}
\caption{Oriented types of stabilizations}\label{oriented-types-fig}
\end{figure}

Our notation for stabilization types follows~\cite{bypasses}. The correspondence with the notation of~\cite{OST} is as follows:

\centerline{\begin{tabular}{|l|c|c|c|c|}
\hline
notation of~\cite{bypasses}&
$\overrightarrow{\mathrm I}$&$\overleftarrow{\mathrm I}$&$\overrightarrow{\mathrm{II}}$&$\overleftarrow{\mathrm{II}}$\\\hline
notation of~\cite{OST}&
\emph{X:NE}, \emph{O:SW}&
\emph{X:SW}, \emph{O:NE}&
\emph{X:SE}, \emph{O:NW}&
\emph{X:NW}, \emph{O:SE}\\\hline
\end{tabular}}

\begin{rema}
The definition of elementary moves for rectangular diagrams slightly
varies from source to source, and the one given here (which follows~\cite{dysha2023}) is probably the most general one.
However, the equivalences generated by complexity preserving moves (exchange moves
and, when the $\mathbb R^2$-settings are used in the definition of a rectangular diagram,
cyclic permutations) together with stabilizations and destabilizations of selected oriented types
are the same for all definitions.
\end{rema}

The proof of the following statement can be found in~\cite{MM,ngth,OST}.

\begin{theo}\label{leg-rect-th}
\emph{(i)}
Any equivalence class of $\xi_+$-Legendrian \emph(respectively, $\xi_-$-Legendrian\emph) links
has the form~$\mathscr L_+(R)$ \emph(respectively, $\mathscr L_-(R)$\emph) for some rectangular diagram~$R$.

\emph{(ii)}
For two rectangular diagrams of links~$R_1$ and~$R_2$ we have~$\mathscr L_+(R_1)=\mathscr L_+(R_2)$
\emph(respectively, $\mathscr L_-(R_1)=\mathscr L_-(R_2)$\emph) if and only if~$R_1$ and~$R_2$ are related
by a sequence of elementary moves not including type~II \emph(respectively, type~I\emph) stabilizations
and destabilizations.
\end{theo}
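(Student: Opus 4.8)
The plan is to set up the correspondence between rectangular diagrams and Legendrian links through the front projections adapted to $\xi_\pm$, and then to treat existence and the two directions of the equivalence separately. I would use throughout the observation, already recorded in the excerpt, that since $\alpha_\pm$ has no $d\tau$-term every arc $\widehat v$ is tangent to both $\xi_+$ and $\xi_-$, so each $\widehat R$ is simultaneously $\xi_+$- and $\xi_-$-Legendrian. For part~(i) I would argue that every $\xi_+$-Legendrian link $L$ is Legendrian isotopic to some $\widehat R$: passing to the front projection adapted to $\xi_+$, the link is recovered uniquely from its front (the missing coordinate being read off as the slope of the front in the usual way), and a front-generic curve avoids tangencies in the forbidden direction and has only cusp singularities. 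Such a front can be approximated, through an ambient $\xi_+$-Legendrian isotopy, by a piecewise-linear front whose arcs run only in the two distinguished directions realized by the arcs $\widehat v$ and the gluing arcs on the circles $\{\tau=0\}$ and $\{\tau=1\}$; lifting this rectilinear front back to a Legendrian link produces a diagram $R$ with $\mathscr L_+(R)=L$, and the mirror computation with $\alpha_-$ gives the $\xi_-$ statement (this is essentially the content of the cited~\cite{ngth,OST}).

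For the ``if'' direction of part~(ii) I would exhibit an explicit isotopy for each permitted move. An exchange move slides one edge of the diagram past a parallel one, and the corresponding family $\widehat{R_t}$ keeps every arc of the form $\widehat v$, hence is a $\xi_+$-Legendrian (indeed simultaneously $\xi_\pm$-Legendrian) isotopy. For a type~I stabilization one checks that the two inserted arcs create only a removable kink in the $\xi_+$-front and can be absorbed by a $\xi_+$-Legendrian isotopy, so $\mathscr L_+$ is unchanged; the same local computation shows that a type~II stabilization instead adds a genuine zigzag to the $\xi_+$-front, changing $\mathscr L_+$ while leaving $\mathscr L_-$ fixed, which is the geometric content of the type assignment recalled before the theorem. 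Consequently any sequence of elementary moves avoiding type~II (de)stabilizations induces a $\xi_+$-Legendrian isotopy and gives $\mathscr L_+(R_1)=\mathscr L_+(R_2)$; the mirror argument handles $\xi_-$.

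The hard part is the converse. Given a $\xi_+$-Legendrian isotopy from $\widehat{R_1}$ to $\widehat{R_2}$, I would perturb it to be generic, so that its $\xi_+$-front is stationary except at finitely many times, at each of which it undergoes one elementary modification of the Legendrian front calculus---the Legendrian analogues of the second and third Reidemeister moves and the moves sliding a strand past a cusp. No zigzag is ever created or destroyed, since a Legendrian isotopy preserves the classical invariants. The proof then reduces to a finite check that each such front modification, performed on a rectilinear front, is reproduced on the diagram level by a short sequence of exchange moves and type~I stabilizations and destabilizations. The main obstacle, and the delicate point of the whole argument, is the orientation bookkeeping: it is precisely the sign of the $d\varphi$-term distinguishing $\alpha_+$ from $\alpha_-$ that splits the stabilization types into~I and~II, and one must verify that the front calculus for $\xi_+$ never calls for a type~II (de)stabilization, only for exchanges and type~I moves---with the roles of~I and~II interchanged for $\xi_-$. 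This is where the asymmetry between the two contact structures is genuinely used, and carrying out the move-by-move translation cleanly is the crux.
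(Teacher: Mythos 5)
The paper does not actually prove Theorem~\ref{leg-rect-th}: it is imported from~\cite{MM,ngth,OST}, so the only meaningful comparison is with the proofs in those references. Your sketch follows precisely their route --- fronts adapted to~$\xi_\pm$ and rectilinear approximation for part~(i); the local computation showing that exchange moves and type~I (de)stabilizations preserve~$\mathscr L_+$ while type~II ones effect a genuine Legendrian stabilization; and, for the converse, genericity of the Legendrian isotopy followed by a translation of front moves into grid moves. So the approach is the standard one and matches the cited sources.

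Two points in the hard direction are, however, more than bookkeeping. First, your list of generic front modifications (``the Legendrian analogues of the second and third Reidemeister moves and the moves sliding a strand past a cusp'') omits the swallowtail-type move in which a pair of cusps and a crossing are born or annihilated. This move does occur in generic Legendrian isotopies --- your observation that no zigzag is created or destroyed excludes stabilizations, not cusp pairs, since the two new cusps together with the new crossing leave the Thurston--Bennequin and rotation numbers unchanged --- and it, too, must be realized by exchanges and type~I moves. Second, besides checking each front move, one must verify that two different rectilinear approximations of one and the same front (and, more generally, of one Legendrian representative) are connected by the allowed grid moves; this is where most of the combinatorial work in~\cite{OST,ngth} actually lies. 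Since you defer the move-by-move verification, which you correctly identify as the crux, what you have is an accurate outline of the known proof rather than a complete argument; for the purposes of this paper, where the theorem is quoted rather than proved, that outline is consistent with the literature being invoked.
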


\begin{defi}
Let~$L_1$ and~$L_2$ be two PL-links in~$\mathbb S^3$. By \emph{a morphism}
from~$L_1$ to~$L_2$ we call a connected component of
the space of orientation preserving PL-homeomophisms~$(\mathbb S^3,L_1)\rightarrow(\mathbb S^3,L_2)$ (which are also supposed to preserve the orientation and numbering
of components of the link).
The set of all morphisms from~$L_1$ to~$L_2$ is denoted by~$\mathrm{Mor}(L_1,L_2)$.

For any three links~$L_1,L_2,L_3$, morphisms from~$\mathrm{Mor}(L_1,L_2)$ and~$\mathrm{Mor}(L_2,L_3)$ are composed in an obvious way. For any link~$L$, this composition operation turns~$\mathrm{Mor}(L,L)$ into a group, which is called \emph{the symmetry group of~$L$} and denoted~$\mathrm{Sym}(L)$.

If~$R_1$ and~$R_2$ are rectangular diagrams, then the
notation~$\mathrm{Mor}(\widehat R_1,\widehat
R_2)$ and~$\mathrm{Sym}(\widehat R_1)$ will be simplified to~$\mathrm{Mor}(R_1,R_2)$
and~$\mathrm{Sym}(R_1)$, respectively.
\end{defi}

Due to works of Cerf~\cite{cerf}, Munkres~\cite{munk}, and Craggs~\cite{craggs} the definition
of morphisms  and symmetry groups of links would not change if arbitrary homeomorphisms are
used instead of piecewise linear ones; neither it would if PL-homeomorphisms are replaced by
diffeomorphisms provided that the links under consideration are smooth. The same refers to
more general situations when arbitrary subpolyhedra or smooth submanifolds of a three-manifold
are considered instead of links in~$\mathbb S^3$. For this reason, throughout the paper we
silently ignore the difference between smooth, topological and piecewise-linear categories
when appropriate.

\begin{rema}
Note that, the homeomorphisms used in the definition of the symmetry group~$\mathrm{Sym}(L)$
are required to preserve the orientation of~$\mathbb S^3$ as well as that of~$L$.
In the literature, it is common to define the symmetry group without these requirements.
So, the symmetry group in our sense is often smaller than the conventional one.
\end{rema}

\begin{defi}
Let~$L_1$ and~$L_2$ be two links in~$\mathbb S^3$, and let~$\eta$ be a morphism from~$L_1$
to~$L_2$. Suppose that there is an embeddded two-disc~$d\subset\mathbb S^3$ such that
the following holds:
\begin{enumerate}
\item
the symmetric difference~$L_1\triangle L_2$ is a union of two open arcs~$\alpha\subset L_1$,
$\beta\subset L_2$;
\item
$d\cap(L_1\cup L_2)=\partial d=\overline{\alpha\cup\beta}$;
\item
the morphism~$\eta$ is represented by a homeomorphism~$(\mathbb S^3,L_1)\rightarrow(\mathbb S^3,
L_2)$ that is identical outside of an open three-ball~$B$ containing the interior of~$d$
and intersecting~$L_1\cup L_2$ in~$\alpha\cup\beta$ (see Figure~\ref{alpha-beta-fig}).
\end{enumerate}
Then we say that the triple~$(L_1,L_2,\eta)$ is \emph{a $\mathbb D^2$-move associated with~$d$}.
\end{defi}
\begin{figure}[ht]
\centerline{\includegraphics{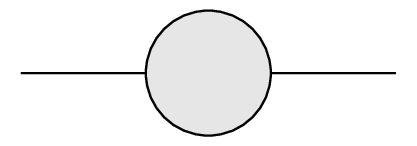}\put(-180,40){$L_1\cap L_2$}
\put(-55,40){$L_1\cap L_2$}\put(-104,32){$d$}\put(-113,68){$\alpha\subset L_1$}
\put(-113,-5){$\beta\subset L_2$}}
\caption{A $\mathbb D^2$-move}\label{alpha-beta-fig}
\end{figure}

Note that if~$L_1$ and~$L_2$ in this definition are not split, then there may exist
only one morphism~$\eta\in\mathrm{Mor}(L_1,L_2)$ such that~$(L_1,L_2,\eta)$
is a $\mathbb D^2$-move. In general, there is an exact transitive action of~$\pi_2(\mathbb S^3\smallsetminus
L_1)$ on the set of $\mathbb D^2$-moves~$(L_1,L_2,\eta)$, $\eta\in\mathrm{Mor}(L_1,L_2)$.

In what follows, by \emph{a transformation} of rectangular diagrams of links we mean
a triple~$(R_1,R_2,\eta)$ in which~$R_1$ and~$R_2$ are rectangular diagrams, and~$\eta$
is an element of~$\mathrm{Mor}(R_1,R_2)$. We use the notation~$R_1\xmapsto\eta
R_2$ in this case.
One can see that all rectangular diagrams
together will all their transformations form a groupoid.
A sequence of transformations in which any two successive ones are composable
will be referred to as \emph{a chain}.

Due to the following statement all elementary moves are canonically turned into transformations
in the above mentioned sense.

\begin{prop}\label{associated-morphism-prop}
Let~$R_1\mapsto R_2$ be an elementary move associated with a rectangle~$[\theta_1;\theta_2]\times
[\varphi_1;\varphi_2]$. Then there is a unique element~$\eta$ of~$\mathrm{Mor}
(R_1,R_2)$ 
such that~$(\widehat R_1,\widehat R_2,\eta)$ is a $\mathbb D^2$-move associated
with a two-disc contained in the tetrahedron~$[\theta_1;\theta_2]*[\varphi_1;\varphi_2]$.
\end{prop}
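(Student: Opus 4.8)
The plan is to realise everything inside the join model $\mathbb S^3\cong(\mathbb T^2\times[0;1])/{\sim}$ and to produce the spanning disc explicitly inside the tetrahedron $T=[\theta_1;\theta_2]*[\varphi_1;\varphi_2]$. First I would record the geometry of $T$. Writing $V=\{\theta_1,\theta_2\}\times\{\varphi_1,\varphi_2\}$ for the four corners of the associated rectangle, $T$ is a PL $3$-ball whose six edges are the arc $[\theta_1;\theta_2]$ on $\mathbb S^1_{\tau=1}$ (its top edge), the arc $[\varphi_1;\varphi_2]$ on $\mathbb S^1_{\tau=0}$ (its bottom edge), and the four arcs $\widehat v$, $v\in V$, which are exactly its four lateral edges joining the two top vertices to the two bottom vertices. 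A point $(\theta,\varphi,\tau)$ with $\tau\in(0;1)$ lies in $T$ iff $\theta\in[\theta_1;\theta_2]$ and $\varphi\in[\varphi_1;\varphi_2]$, so by condition~(5) of Definition~\ref{moves-def} the set $(\widehat R_1\cup\widehat R_2)\cap\mathrm{int}(T)$ consists of nothing but parts of the four lateral edges $\widehat v$, $v\in V$, every other arc $\widehat w$ meeting $T$ only in isolated points of the top or bottom edge.

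Next I would identify the symmetric difference. Put $\alpha=\widehat R_1\smallsetminus\widehat R_2$ and $\beta=\widehat R_2\smallsetminus\widehat R_1$. Condition~(2) gives $R_1\triangle R_2=V$, so $\alpha$ and $\beta$ are unions of interiors of those lateral edges $\widehat v$ with $v\in R_1\smallsetminus R_2$, respectively $v\in R_2\smallsetminus R_1$. The crucial point, where the defining axioms of a rectangular diagram enter, is that each meridian and longitude carries either no or exactly two vertices: at the vertex of $T$ where the edges appearing in only one of the diagrams meet, the relevant meridian or longitude is already saturated by two vertices of that diagram, so the opposite diagram has no vertex there. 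This forces $\alpha$ and $\beta$ each to be a single open arc, and $\overline{\alpha\cup\beta}$ to be the unique $4$-cycle running through all four lateral edges (the Hamiltonian cycle of the bipartite graph $K_{2,2}$ they form). The description is uniform in the three cases: an exchange move splits the four edges as $2+2$, a stabilization as $1+3$, a destabilization as $3+1$, and condition~(3) excludes the diagonal configuration in the exchange case. In particular $\widehat R_1\triangle\widehat R_2=\alpha\cup\beta$ is a union of two open arcs, as the definition of a $\mathbb D^2$-move requires.

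To produce the disc I would observe that this $4$-cycle is a simple closed curve on $\partial T\cong\mathbb S^2$ separating the top edge from the bottom edge, hence bounds a disc $d_0\subset\partial T$; pushing $\mathrm{int}(d_0)$ slightly into $\mathrm{int}(T)$ gives an embedded disc $d\subset T$ with $\partial d=\overline{\alpha\cup\beta}$ and $\mathrm{int}(d)\subset\mathrm{int}(T)$. Since $\mathrm{int}(T)$ is disjoint from $\widehat R_1\cup\widehat R_2$, we obtain $d\cap(\widehat R_1\cup\widehat R_2)=\partial d$, so $d$ is a spanning disc. A homeomorphism supported in a small ball about $d$ and pushing $\alpha$ across $d$ onto $\beta$ carries $\widehat R_1$ to $\widehat R_2$; by condition~(6) it respects the orientations and the numbering of components, and being supported in a ball it preserves the orientation of $\mathbb S^3$. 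Thus it represents an element $\eta\in\mathrm{Mor}(R_1,R_2)$ making $(\widehat R_1,\widehat R_2,\eta)$ the desired $\mathbb D^2$-move.

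The hard part is uniqueness, and this is precisely where confinement to $T$ is used. Any morphism $\eta'$ admitting a spanning disc $d'\subset T$ must satisfy $\partial d'=\overline{\widehat R_1\triangle\widehat R_2}$, i.e.\ have the same $4$-cycle as boundary. By the remark following the definition of a $\mathbb D^2$-move, two such moves differ by the action of $\pi_2(\mathbb S^3\smallsetminus\widehat R_1)$, read off from the sphere assembled from $d$ and $d'$. Since both discs lie in the ball $T$ and meet $\widehat R_1$ only along the fixed arc $\alpha\subset\partial d=\partial d'$, I would first isotope $d'$ off $\partial T$ to make it properly embedded (an isotopy rel $\partial d'$ through $\mathrm{int}(T)$, hence disjoint from $\widehat R_1$), and then invoke the standard fact that two properly embedded discs in a $3$-ball with common boundary are ambient isotopic rel boundary. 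This isotopy stays inside $T$ and fixes $\widehat R_1$, so $\eta'=\eta$; equivalently, no nontrivial class of $\pi_2(\mathbb S^3\smallsetminus\widehat R_1)$ is supported in the ball $T$. I expect this killing of the $\pi_2$-indeterminacy by the ball $T$, rather than the construction of $d$, to be the main obstacle to write carefully.
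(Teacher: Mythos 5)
Your proof is correct and follows essentially the same route as the paper's: identify $\widehat R_1\triangle\widehat R_2$ as two open arcs whose closure is the $4$-cycle $\{\theta_1,\theta_2\}*\{\varphi_1,\varphi_2\}$ on the boundary of the join ball $[\theta_1;\theta_2]*[\varphi_1;\varphi_2]$, whose interior is disjoint from $\widehat R_1\cup\widehat R_2$, and obtain the spanning disc by pushing a boundary disc inward. The only difference is that you spell out the uniqueness step (any two such discs in the ball are isotopic rel boundary away from the link), which the paper leaves implicit in ``the claim follows''; your elaboration is sound.
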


\begin{proof}
We use the notation from~Definition~\ref{moves-def}.
The symmetric difference~$\widehat R_1\triangle\widehat R_2$ is a union of two
open arcs~$\alpha,\beta$ such that~$\alpha\subset\widehat R_1$, $\beta\subset\widehat R_2$,
and the closure~$\overline{\alpha\cup\beta}$ is the closed curve~$\{\theta_1,\theta_2\}*\{\varphi_1,\varphi_2\}
\subset\mathbb S^1*\mathbb S^1=\mathbb S^3$. This curve is unknotted, and, moreover,
there is an embedded $2$-disc~$d\subset\mathbb[\theta_1;\theta_2]*[\varphi_1;\varphi_2]$ with~$d\cap(\widehat R_1\cup\widehat R_2)=
\partial d=d\cap\partial([\theta_1;\theta_2]*[\varphi_1;\varphi_2])=\overline{\alpha\cup\beta}$.
This is a consequence of the fact that
the curve~$\overline{\alpha\cup\beta}$ lies on the boundary of the $3$-ball~$[\theta_1;\theta_2]*
[\varphi_1;\varphi_2]$ whose interior is disjoint from~$\widehat R_1\cup\widehat R_2$.
The claim follows.
\end{proof}

\begin{defi}
The morphism specified in Proposition~\ref{associated-morphism-prop} is said
to be \emph{associated} with the corresponding elementary move~$R_1\mapsto R_2$.
Thus, by saying that~$R_1\xmapsto\eta R_2$ is an elementary move we
mean that~$R_1\mapsto R_2$ is an elementary move and~$\eta$ is the corresponding
associated morphism.

If~$s$ is a chain of elementary moves~$R_0\xmapsto{\eta_1}R_1\xmapsto{\eta_2}
\ldots\xmapsto{\eta_N}R_N$,
then by~$\widehat s$ we denote the composition of all the morphisms associated with these moves:
$\widehat s=\eta_N\circ\ldots\circ\eta_2\circ\eta_1$,
and say that the morphism~$\widehat s\in\mathrm{Mor}(R_0,R_N)$ is \emph{induced by~$s$}.
\end{defi}

\begin{theo}\label{repr-by-elem-move-th}
For any rectangular diagrams of a link~$R$ and~$R'$ any morphism in~$\mathrm{Mor}(R,R')$
is induced by a chain of elementary moves.
\end{theo}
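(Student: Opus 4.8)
The plan is to realise a prescribed morphism by transporting the diagram along an ambient isotopy and then discretising that isotopy into elementary moves, checking at each step that the associated morphisms match. Fix a morphism~$\eta\in\mathrm{Mor}(R,R')$ (if this set is empty there is nothing to prove) and choose a representative homeomorphism~$h\colon(\mathbb S^3,\widehat R)\to(\mathbb S^3,\widehat{R'})$. Since~$h$ preserves the orientation of~$\mathbb S^3$ and the group of orientation-preserving self-homeomorphisms of~$\mathbb S^3$ is connected (which, via the category-independence noted after the definition of morphisms, reduces to Cerf's theorem that~$\pi_0\mathrm{Diff}^+(\mathbb S^3)$ is trivial), there is an ambient isotopy~$H_t$, $t\in[0;1]$, with~$H_0=\mathrm{id}$ and~$H_1=h$. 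The family~$t\mapsto H_t(\widehat R)$ is then an isotopy of links from~$\widehat R$ to~$\widehat{R'}$ whose time-one map represents exactly~$\eta$.

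The first step is to convert this isotopy into a chain of elementary moves. After a small perturbation I would put~$H_t(\widehat R)$ in general position with respect to the join coordinates~$(\theta,\varphi,\tau)$, so that for all but finitely many~$t$ the link~$H_t(\widehat R)$ is carried by a rectangular diagram, and so that the finitely many exceptional times correspond to the events at which the combinatorial type of that diagram changes. This is precisely the content of the Reidemeister-type theorem for rectangular diagrams established in the works whose technique we follow, which already yields the \emph{existence} of a chain~$R=R_0\mapsto R_1\mapsto\cdots\mapsto R_N=R'$ accompanying the isotopy. The only additional point needed here---the `patch'---is to keep track of the morphism induced by this chain.

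For the bookkeeping I would argue that each exceptional time is supported in an arbitrarily small ball and realises a~$\mathbb D^2$-move whose spanning disc may be taken inside the tetrahedron~$[\theta_1;\theta_2]*[\varphi_1;\varphi_2]$ associated with the corresponding elementary move. By Proposition~\ref{associated-morphism-prop} the morphism of this~$\mathbb D^2$-move is then the unique associated morphism~$\eta_j\in\mathrm{Mor}(R_{j-1},R_j)$, so the local contribution of the isotopy agrees with the associated morphism rather than differing from it by an element of~$\pi_2(\mathbb S^3\smallsetminus\widehat R_{j-1})$. Composing over all exceptional times, the induced morphism~$\eta_N\circ\cdots\circ\eta_1$ coincides with the isotopy class of~$H_1=h$, that is, with~$\eta$, as required. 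Equivalently, one may organise the argument through the torsor structure of~$\mathrm{Mor}(R,R')$ over~$\mathrm{Sym}(\widehat R)$: the bare connecting theorem supplies one chain inducing some~$\eta_0$, and it then suffices to realise the symmetry~$\eta_0^{-1}\circ\eta$ by a loop of elementary moves, which the very same isotopy construction provides.

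The main obstacle is the general-position step together with the morphism matching. Keeping~$H_t(\widehat R)$ in rectangular position throughout, and cutting the isotopy at precisely the moments where an elementary move occurs, is exactly the delicate part handled by the technique of the cited references; the genuinely new work is to verify that the canonical disc placement of Proposition~\ref{associated-morphism-prop} can be arranged to coincide with the support of each local isotopy event, thereby eliminating the~$\pi_2$-ambiguity inherent in a general~$\mathbb D^2$-move. Once this compatibility is secured, the equality between the induced morphism and~$\eta$ follows, and the theorem is proved.
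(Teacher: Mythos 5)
Your overall skeleton matches the paper's: represent $\eta$ by a homeomorphism isotopic to the identity, cut the isotopy into local events realized as $\mathbb D^2$-moves, and use Proposition~\ref{associated-morphism-prop} to pin down the morphism contributed by each event. The gap is in the central step, where you claim that after a small perturbation the family $H_t(\widehat R)$ is ``carried by a rectangular diagram'' for all but finitely many $t$, with the exceptional times being elementary moves. Links of the form $\widehat R$ are unions of join arcs $v\times[0;1]$; being in this position is an infinite-codimension condition, so no genericity argument keeps $H_t(\widehat R)$ in that form, and the existing Reidemeister-type theorems (Cromwell, Dynnikov) are not proved by such a time-discretization---they produce a chain of elementary moves by a quite different construction. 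Consequently your ``bookkeeping'' step has nothing to attach to: the elementary moves of the chain supplied by the existence theorem do not correspond to localized events of your isotopy, so the claim that each local contribution of $H_t$ agrees with the associated morphism of some elementary move is unsupported. Your fallback via the torsor structure is circular: realizing the symmetry $\eta_0^{-1}\circ\eta$ by a loop of elementary moves is precisely the theorem in the case $R=R'$, so saying that ``the very same isotopy construction provides'' it begs the question.

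What the paper does at this point is the actual technical content. First the isotopy is arranged so that at each moment only an open arc of the link moves, which factors $\eta$ into $\mathbb D^2$-moves $(L_{i-1},L_i,\eta_i)$ with specified spanning discs $d_i$. Then the machinery of rectangular diagrams of surfaces from~\cite{dp17} is invoked: after suitable chains of stabilizations applied at both ends one may assume that each intermediate link has the form $\widehat R_i$ and that each disc is isotopic relative to the link to $\widehat\Pi_i$ for a rectangular diagram of a surface $\Pi_i$. Finally the rectangles of $\Pi_i$ are removed one at a time in a suitable order; each removal is an elementary move whose associated morphism is exactly the $\mathbb D^2$-move of that single rectangle, and the composite of these is $\eta_i$ because the whole disc is exhausted. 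This is the mechanism that produces the chain and controls the induced morphism simultaneously, and it is what eliminates the $\pi_2$-ambiguity you correctly identify; without it, or an equivalent substitute, your argument does not close.
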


\begin{proof}
We assume that~$\mathrm{Mor}(R,R')$ is not empty, since otherwise there is
nothing to prove.

It follows from~\cite[Theorem on page~45]{crom} and~\cite[Proposition~5]{dyn06}
that \emph{some} element of~$\mathrm{Mor}(R,R')$
is induced by a chain of elementary moves. Intuitively, it is obvious that the proofs of these
statements are based on producing
a chain of elementary moves from a morphism that is picked arbitrarily at the very beginning
of the proof, and the morphism induced by the obtained chain must be the same. This implies
that, actually, \emph{any} morphism can be induced by a chain of elementary moves.
But since the question about which morphism is induced by the obtained chain of moves is
not discussed in~\cite{crom,dyn06}, we briefly sketch a more formal argument.

Let~$\eta\in\mathrm{Mor}(R,R')$. Any homeomorphism of~$\mathbb S^3$
representing~$\eta$ is PL-isotopic to identity, and, moreover, the isotopy can
be chosen so that, at every moment, only an open arc of the image of~$\widehat R$ is moving
whereas the rest of the link stays fixed. This means that there exists a sequence
$$(L_0,L_1,\eta_1),(L_1,L_2,\eta_2),\ldots,(L_{N-1},L_N,\eta_N)$$
of $\mathbb D^2$-moves associated with some two-discs~$d_1,d_2,\ldots,d_N$
such that~$L_0=\widehat R$, $L_N=\widehat R'$, and
$$\eta=\eta_N\circ\ldots\circ\eta_2\circ\eta_1.$$
The union~$L_0\cup L_1\cup\ldots\cup L_N$ is then a graph.

Let~$\phi_1,\ldots,\phi_N$ be self-homeomorphisms of~$\mathbb S^3$ representing~$\eta_1,\ldots,
\eta_N$, respectively.
Using the technique of~\cite{dp17}, one can show that
there exist transformations~$R\xmapsto{\widehat s}R_0$
and~$R'\xmapsto{\widehat s'}R_N$ with $s$ and~$s'$ chains of stabilizations
starting from~$R$ and~$R'$, respectively,
and a self-homeomorphism~$\psi$ of~$\mathbb S^3$ such that the following holds:
\begin{enumerate}
\item
$\psi$ represents both morphisms~$\widehat s$ and~$\widehat s'$;
\item
for any~$i=0,1,\ldots,N$, the link~$\psi(L_i)$ has the form~$\widehat R_i$,
where~$R_i$ is a rectangular diagram of a link;
\item
for any~$i=0,1,\ldots,N$, the two-disc~$\psi(d_i)$ is isotopic relative to~$\widehat R_i$ to a disc of the form~$\widehat\Pi_i$,
where~$\Pi_i$ is a rectangular diagram of a surface (see~\cite[Definitions~1 and~9]{dp17}).
\end{enumerate}
Denote by~$\zeta_i$ the element of~$\mathrm{Mor}(R_{i-1},R_i)$
represented by~$\psi\circ\phi_i\circ\psi^{-1}$, $i=1,\ldots,N$. We have
$$\eta=(\widehat s')^{-1}\circ\zeta_N\circ\ldots\circ\zeta_2\circ\zeta_1\circ\widehat s,$$
and for each~$i=1,\ldots,N$, the triple~$(\widehat R_{i-1},\widehat R_i,\zeta_i)$
is a~$\mathbb D^2$-move associated with~$\widehat\Pi_i$. Therefore, it suffices
to establish the statement of the theorem in the particular case of a $\mathbb D^2$-move
associated with a disc represented by a rectangular diagram of a surface.

So, from now on we suppose that~$(R,R',\zeta)$ is a $\mathbb D^2$-move
associated with a disc of the form~$\widehat\Pi$, where~$\Pi$ is a rectangular
diagram of a surface. The rectangles in~$\Pi$ can be numbered~$r_1,r_2,\ldots,r_q$
so that, for any~$i=1,2,\ldots,q$ the intersection of~$\widehat r_i$
with~$\widehat R\cup\widehat r_1\cup\ldots\cup\widehat r_{i-1}$ is an arc.
Then by removing them one-by-one from~$\Pi$ we obtain a sequence of
$\mathbb D^2$-moves~$(\widehat R^0=\widehat
R,\widehat R^1,\chi_1),(\widehat R^1,\widehat R^2,\chi_2),\ldots,(\widehat R^{q-1},\widehat
R^q=\widehat R',\chi_q)$
associated, respectively, with~$\widehat r_1,\widehat r_2,\ldots,\widehat r_q$.
Each transformation~$R^{i-1}\xmapsto{\chi_i}R^i$ is then an elementary move, and we
obviously have
$$\zeta=\chi_q\circ\ldots\circ\chi_2\circ\chi_1,$$
which concludes the proof of Theorem~\ref{repr-by-elem-move-th}
\end{proof}

\begin{theo}\label{find-gen-th}
There is an algorithm that, given a rectangular diagram of a link~$R$,
produces a finite family of chains~$s_1,s_2,\ldots,s_r$ of elementary moves
such that~$\widehat s_i$, $i=1,\ldots,r$, generate the group~$\mathrm{Sym}(R)$.
\end{theo}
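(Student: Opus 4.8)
The plan is to split the statement into two largely independent tasks and join them through Theorem~\ref{repr-by-elem-move-th}. First I would obtain, by purely topological means, some finite generating set of $\mathrm{Sym}(R)$ as an abstract group, each generator presented by an explicit self-homeomorphism of the pair $(\mathbb S^3,\widehat R)$. Second, I would rewrite each such generator as a chain of elementary moves. The bridge is Theorem~\ref{repr-by-elem-move-th}: every element of $\mathrm{Mor}(R,R)=\mathrm{Sym}(R)$ equals $\widehat s$ for some chain $s$ of elementary moves leading from $R$ back to a diagram combinatorially equivalent to $R$ (whose associated link is canonically identified with $\widehat R$). Hence the second task is in principle solvable by a blind search: enumerate all finite chains of elementary moves issuing from $R$ and returning to $R$, and for each one compute the induced morphism $\widehat s$ as the composition of the associated $\mathbb D^2$-moves.

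For this search to be effective I need two decidability inputs. The first is a procedure that decides, given two chains, whether they induce the same morphism; equivalently, whether two prescribed self-homeomorphisms of $(\mathbb S^3,\widehat R)$ are isotopic. This is an instance of the isotopy and homeomorphism problems for the complement $\mathbb S^3\setminus\widehat R$, which are algorithmically solvable via the geometric decomposition of $3$-manifolds, once one keeps track of the peripheral structure, the orientation of $\mathbb S^3$, and the orientation and numbering of the components. With such a test in hand I can carry out exact computations in $\mathrm{Sym}(R)$, using chains merely as names for group elements and recognizing coincidences between them.

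The second input is an \emph{a priori} finite generating set of $\mathrm{Sym}(R)$ produced independently of the elementary-move formalism. Here I would invoke the computability of the mapping class group of the compact $3$-manifold obtained by deleting open tubular neighbourhoods of the components of $\widehat R$: its JSJ and geometric decomposition is algorithmic, the mapping class group of each geometric piece is computable (finite and rigid in the hyperbolic case, explicitly understood in the Seifert-fibred case), and these data assemble into a finite generating set; restricting to the symmetries that respect the required orientations and numbering yields generators $g_1,\dots,g_r$ of $\mathrm{Sym}(R)$, each given by an explicit homeomorphism. Feeding these targets into the search above and using the equality test of the previous paragraph to detect a match, Theorem~\ref{repr-by-elem-move-th} guarantees that for each $g_i$ a chain $s_i$ with $\widehat{s_i}=g_i$ is eventually produced; the search then halts and outputs $s_1,\dots,s_r$.

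The hard part will be the halting certificate, that is, knowing when to stop. Running the enumeration on its own only ever exhibits an increasing sequence of finitely generated subgroups of $\mathrm{Sym}(R)$ converging to the whole group, and nothing internal to the move formalism reveals when the whole group has been reached. This is precisely why the externally computed generating set of $\mathrm{Sym}(R)$ is indispensable, and why the decidability of equality of morphisms is what makes matching a found chain against a prescribed generator effective. I expect the remaining effort to lie in making these two topological inputs precise in the PL category and in verifying that the identifications between $\mathrm{Sym}(R)$, the orientation- and peripheral-structure-preserving mapping class group of the complement, and the morphisms $\widehat s$ induced by chains are mutually compatible.
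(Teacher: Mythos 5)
Your top-level architecture coincides with the paper's: compute a finite generating set of $\mathrm{Sym}(R)$ by external topological means, enumerate chains of elementary moves starting from and arriving at $R$, match the induced morphisms against the generators, and use Theorem~\ref{repr-by-elem-move-th} as the halting certificate guaranteeing that every generator is eventually realized by some chain. That is exactly how the paper's proof of Theorem~\ref{find-gen-th} is organized, and your identification of the halting issue as the crux is correct. Where you diverge is in how the two technical inputs are obtained. The paper passes to the exterior $M$ of $\widehat R$ and invokes its Theorem~\ref{fertile-alg-exist-th} to build a \emph{fertile} family of special spines $P_0,\dots,P_N$ with ball complements. This single device delivers both inputs at once: fertility gives the generating set, and the ball-complement property means a self-homeomorphism of $M$ is determined up to isotopy by its restriction to $P_0$, so that matching a chain against a generator reduces to enumerating PL representatives of $\widehat s$ and testing \emph{literal coincidence} of restrictions to $P_0$ against a finite precomputed list. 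In particular the paper never needs a general decision procedure for isotopy of two given self-homeomorphisms, which you posit as your first input.

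The caveat is that your second input --- an algorithm producing explicit homeomorphism representatives of a generating set of the (orientation-, numbering- and peripheral-structure-preserving) mapping class group of the link exterior --- is not an off-the-shelf citation. Finite generation is classical (Johannson), and the JSJ-based sketch you give is morally right, but assembling the piecewise data (isometry groups of hyperbolic pieces, vertical/horizontal symmetries and Dehn twists in Seifert pieces, twists along JSJ tori, permutations of pieces) into an \emph{algorithm} with explicit representatives is precisely the content of the paper's Section~\ref{generators-sec}: the authors had to reopen Matveev's characteristic-spine algorithm and modify its triangulation-dependent extension rules (conditions~\eqref{newE-eq} and~\eqref{gpp-eq}) to make the output fertile, handling each extension type $E_1,\dots,E_{10}$ separately. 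So your proposal is correct in outline and would go through, but the two black boxes you defer to ``remaining effort'' are where essentially all of the proof lives; if you intend to cite rather than construct them, you need a precise reference for the computability of generating sets of mapping class groups of Haken manifolds with the required decorations, and you should also address the degenerate cases (split or unknotted-component links, where the exterior fails to be Haken in Matveev's sense) that any such reduction must confront.
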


The proof of this theorem is given in Section~\ref{generators-sec}.

\section{Commutations of moves}\label{commute-sec}

Each transformation of rectangular diagrams of links can be decomposed into elementary moves
in infinitely many ways. Here we present two results stating that among those decompositions there are always
ones in which elementary moves follow in a certain order. The first result, which is very easy and
mentioned already in~\cite{dyn06} in a weaker form, states that stabilizations can
always be moved to the beginning of the sequence, whereas destabilizations can be postponed
to the end. This will imply Proposition~\ref{sym-no-stab-prop}. The second result, which is much harder,
states, vaguely speaking, that transformations preserving~$\mathscr L_+$ commute with those
preserving~$\mathscr L_-$. This is the matter of Theorem~\ref{r3-mor-th}.

Let~$s$ be a chain of elementary moves~$R_1\mapsto R_2\mapsto\ldots\mapsto R_N$
each rectangular diagram in which has~$m$ connected components. For~$i\in\{1,\ldots,m\}$
and~$t\in\{\overrightarrow{\mathrm I},\overleftarrow{\mathrm I},\overrightarrow{\mathrm{II}},\overleftarrow{\mathrm{II}}\}$ we denote by~$n_{i,t}(s)$ the number of stabilizations
of oriented type~$t$ in~$s$ performed on the connected component number~$i$.

\begin{prop}\label{sym-no-stab-prop}
Let~$R$ be a rectangular diagram of a link having~$m$ components, and let~$s_1,\ldots,s_r$
be chains of elementary moves starting from and arriving at~$R$.
Suppose that the morphisms~$\widehat s_1,\ldots,\widehat s_r$ generate the entire
group~$\mathrm{Sym}(R)$.

Take any subset~$X$ of~$\{1,\ldots,m\}\times\{\overrightarrow{\mathrm I},\overleftarrow{\mathrm I},\overrightarrow{\mathrm{II}},\overleftarrow{\mathrm{II}}\}$, and let~$R'$ be a rectangular diagram of a link obtained
from~$R$ by a chain of stabilizations in which, for every pair~$(i,t)\in X$, there are~$\max_jn_{i,t}(s_j)$
stabilizations of oriented type~$t$ performed on the component number~$i$. Then any element of~$\mathrm{Sym}(\widehat{R'})$
is induced by a chain~$s$ of elementary moves such that~$n_{i,t}(s)=0$ for all~$(i,t)\in X$.
\end{prop}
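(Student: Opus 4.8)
The plan is to reduce the statement to a claim about a single generator by exploiting the conjugation action of the stabilization chain that produces~$R'$, and then to absorb the forbidden stabilizations of that generator into the stabilizations that are already present in~$R'$, using the first commutation result together with the sliding properties of stabilizations. Throughout, write~$M_{i,t}:=\max_j n_{i,t}(s_j)$.

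First I would fix a chain of stabilizations~$\sigma$ realizing the passage from~$R$ to~$R'$, so that~$\widehat\sigma\in\mathrm{Mor}(R,R')$ and, for every~$(i,t)\in X$, the chain~$\sigma$ performs exactly~$M_{i,t}$ stabilizations of oriented type~$t$ on component~$i$. Since stabilizations preserve the topological type of the link, $\widehat\sigma$ is an invertible morphism, and conjugation~$g\mapsto\widehat\sigma\circ g\circ\widehat\sigma^{-1}$ is an isomorphism~$\mathrm{Sym}(R)\to\mathrm{Sym}(\widehat{R'})$. Consequently the morphisms~$\widehat\sigma\circ\widehat s_j\circ\widehat\sigma^{-1}$, $j=1,\ldots,r$, generate~$\mathrm{Sym}(\widehat{R'})$, and each of them is induced by the loop~$\bar s_j:=\sigma^{-1}s_j\sigma$ based at~$R'$, where~$\sigma^{-1}$ is the reversed chain of destabilizations. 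A concatenation of chains each having~$n_{i,t}=0$ for all~$(i,t)\in X$ again has this property, and the same holds for reversals provided the chains also contain no destabilizations of the types in~$X$. Hence it suffices to show that each~$\bar s_j$ is induced, via another chain~$\tilde s_j$ based at~$R'$, inducing the same morphism and containing no stabilization and no destabilization of any oriented type~$t$ on any component~$i$ with~$(i,t)\in X$.

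To establish this for a fixed~$j$, I would apply the first commutation result to~$s_j$ alone, rewriting it as~$s_j\sim S_jE_jD_j$, where~$S_j$ consists of stabilizations, $E_j$ of exchange moves, and~$D_j$ of destabilizations; this rearrangement preserves the induced morphism and, for each oriented type and component, the numbers of stabilizations and of destabilizations. In particular, for each~$(i,t)\in X$, the chain~$S_j$ contains~$n_{i,t}(s_j)\le M_{i,t}$ stabilizations of type~$t$ on component~$i$, and~$D_j$ contains as many destabilizations of the corresponding kinds, so that~$\bar s_j\sim\sigma^{-1}S_jE_jD_j\sigma$. The key point is that the~$M_{i,t}$ stabilizations carried out by~$\sigma$ are already present in~$R'$: the~$n_{i,t}(s_j)$ stabilizations of~$S_j$ of a forbidden type need not be created afresh but can be matched, one by one, with pre-existing stabilizations of the same oriented type on the same component. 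Concretely, I would use the first commutation result to bring each forbidden stabilization of~$S_j$ adjacent to a destabilization of~$\sigma^{-1}$ of the same kind, slide it (by exchange moves, which do not contribute to any~$n_{i,t}$) into the exact position of that pre-existing stabilization, and cancel the resulting destabilization-stabilization pair. Performing the symmetric cancellation at the end of the loop, between the forbidden destabilizations of~$D_j$ and the stabilizations of~$\sigma$, and finally canceling the remaining, unmatched pre-existing stabilizations of~$R'$ (destroyed near the start of the loop and recreated near its end) against one another, eliminates every stabilization and destabilization of the types in~$X$ while keeping these extra stabilizations slid clear of the moves of~$s_j$. The outcome is a loop~$\tilde s_j$ with the required vanishing of~$n_{i,t}$ and of the corresponding destabilization counts, inducing the same morphism as~$\bar s_j$.

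The main obstacle is the last step, namely making rigorous the claim that a stabilization and a destabilization of the same oriented type on the same component cancel modulo exchange moves, and that the unmatched pre-existing stabilizations can always be slid out of the way of the moves of~$s_j$. Both facts rest on the principle that a stabilization of a fixed oriented type on a fixed component is determined, up to exchange moves, by its type and its component alone, independently of its position on the diagram; this is precisely the behaviour of stabilizations established by the technique of~\cite{bypasses,dp2021,dysha2023}. The bookkeeping is governed by the choice~$M_{i,t}=\max_j n_{i,t}(s_j)$, which guarantees that, for each generator treated in isolation, there are always enough pre-existing stabilizations in~$R'$ to absorb all of its forbidden stabilizations. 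This is exactly why the maximum over~$j$, rather than any cumulative count over a word in the generators, is the correct quantity, and why the reduction must be performed one generator at a time.
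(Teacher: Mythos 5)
Your proposal is correct and follows essentially the same route as the paper: both arguments rest on the same two tools --- sliding a stabilization to an earlier position in a chain (Lemma~\ref{stab-first-lem}) and the fact that two stabilizations of the same oriented type on the same component differ by exchange moves, so that an adjacent destabilization--stabilization pair of the same kind cancels (Lemma~\ref{stab-related-by-exch-lem}) --- and both exploit the choice of~$\max_jn_{i,t}(s_j)$ by treating one generator at a time. The only difference is organizational: the paper conjugates by one stabilization at a time and inducts on~$\sum_{(i,t)\in X}N_{i,t}$ after padding each~$s_j$ so that~$n_{i,t}(s_j)=N_{i,t}$, whereas you conjugate by the whole stabilization chain~$\sigma$ at once and then cancel matched pairs, disposing of the unmatched leftovers of~$\sigma^{-1}$ and~$\sigma$ against each other.
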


To prove the statement, we need some preparation.

\begin{lemm}\label{stab-local-lem}
Any stabilization of rectangular diagrams can be decomposed into a chain
of three elementary moves in which the first move is a local stabilization, and the two
others are exchange moves.
\end{lemm}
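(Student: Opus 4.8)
The plan is to realize a general stabilization as a \emph{small} (local) stabilization performed right next to the vertex being split, followed by two exchange moves that slide the newly created small rectangle out to the position prescribed by the original move. Fix notation as in Definition~\ref{moves-def}: the stabilization $R_1\mapsto R_2$ is associated with a rectangle $[\theta_1;\theta_2]\times[\varphi_1;\varphi_2]$, and since $|R_2|=|R_1|+2$ exactly one of the four corners belongs to $R_1$ while the other three belong to $R_2$. Up to relabelling the coordinates (which turns local stabilizations into local stabilizations and exchanges into exchanges), I may assume this corner is $(\theta_1,\varphi_1)$, so that $R_2$ contains $(\theta_2,\varphi_1)$, $(\theta_1,\varphi_2)$, $(\theta_2,\varphi_2)$.

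First I would choose auxiliary coordinates $\theta_0\in(\theta_1;\theta_2)$ and $\varphi_0\in(\varphi_1;\varphi_2)$ so close to $\theta_1$ and $\varphi_1$ that neither of the arcs $(\theta_1;\theta_0)$ and $(\varphi_1;\varphi_0)$ contains the coordinate of any vertex of $R_1$; this is possible because $R_1$ is finite. The three moves are then: (1) the stabilization associated with the small rectangle $[\theta_1;\theta_0]\times[\varphi_1;\varphi_0]$, which splits $(\theta_1,\varphi_1)$ into the same corner pattern as the original move; (2) the exchange associated with $[\theta_0;\theta_2]\times[\varphi_1;\varphi_0]$, which slides the vertical edge $\{(\theta_0,\varphi_1),(\theta_0,\varphi_0)\}$ to $\{(\theta_2,\varphi_1),(\theta_2,\varphi_0)\}$; and (3) the exchange associated with $[\theta_1;\theta_2]\times[\varphi_0;\varphi_2]$, which slides the horizontal edge $\{(\theta_1,\varphi_0),(\theta_2,\varphi_0)\}$ to $\{(\theta_1,\varphi_2),(\theta_2,\varphi_2)\}$. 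Tracing the vertices shows that the resulting diagram is exactly $R_2$, and move~(1) is local by the choice of $\theta_0,\varphi_0$.

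The core of the verification is checking that (2) and (3) genuinely satisfy the conditions of Definition~\ref{moves-def}, the essential one being condition~(5) that each exchange rectangle meets the current diagram only in its four corners. For (2) this is immediate: its $\varphi$-extent $[\varphi_1;\varphi_0]$ contains no vertex coordinate other than $\varphi_1,\varphi_0$, so no vertex outside the four corners can lie in it. For (3) the $\theta$-extent is the full $[\theta_1;\theta_2]$, so the verification rests on condition~(5) for the original big rectangle: any vertex of $R_1$ with $\theta\in(\theta_1;\theta_2)$ must have $\varphi\notin[\varphi_1;\varphi_2]\supset[\varphi_0;\varphi_2]$, and the only vertices created in the first two moves sitting in the relevant columns are the intended corners. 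I would also record that the column $\theta_2$ and the row $\varphi_2$ are empty in $R_1$ (a standard consequence of the symmetric difference being the four corners), which makes the appearance of the new vertices at $\theta_2$ and $\varphi_2$ legitimate. This bookkeeping, together with the edge and orientation conditions (which are automatic here), is the only genuinely technical part.

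Finally, if the statement is read at the level of transformations, so that the induced morphism of the chain must equal the morphism of the original stabilization, I would invoke Proposition~\ref{associated-morphism-prop}: the three small tetrahedra $[\theta_1;\theta_0]*[\varphi_1;\varphi_0]$, $[\theta_0;\theta_2]*[\varphi_1;\varphi_0]$ and $[\theta_1;\theta_2]*[\varphi_0;\varphi_2]$ all lie inside the single tetrahedron $[\theta_1;\theta_2]*[\varphi_1;\varphi_2]$ whose interior is disjoint from $\widehat R_1\cup\widehat R_2$, so the composite is again a $\mathbb D^2$-move with disc in that tetrahedron and hence coincides with the morphism associated to $R_1\mapsto R_2$. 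The main obstacle I anticipate is precisely the condition~(5) verification for move~(3), where the side annuli of the big rectangle may contain arbitrarily many vertices; the whole point of choosing $\theta_0,\varphi_0$ close to $\theta_1,\varphi_1$ is to confine both exchanges to regions where these stray vertices cannot interfere.
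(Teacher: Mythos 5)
Your proposal is correct and follows essentially the same route as the paper: the paper exhibits exactly this decomposition (a local stabilization at the vertex being split, followed by two exchange moves sliding the new edge to its final position) by a picture in one of the four symmetric cases, and handles the morphism identity by the same observation that everything happens inside a slightly perturbed copy of the ball $[\theta_1;\theta_2]*[\varphi_1;\varphi_2]$ meeting each link in an unknotted arc. Your write-up merely makes the coordinate bookkeeping for condition~(5) explicit where the paper relies on the figure.
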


\begin{proof}
\begin{figure}[ht]
\centerline{\includegraphics[scale=.4]{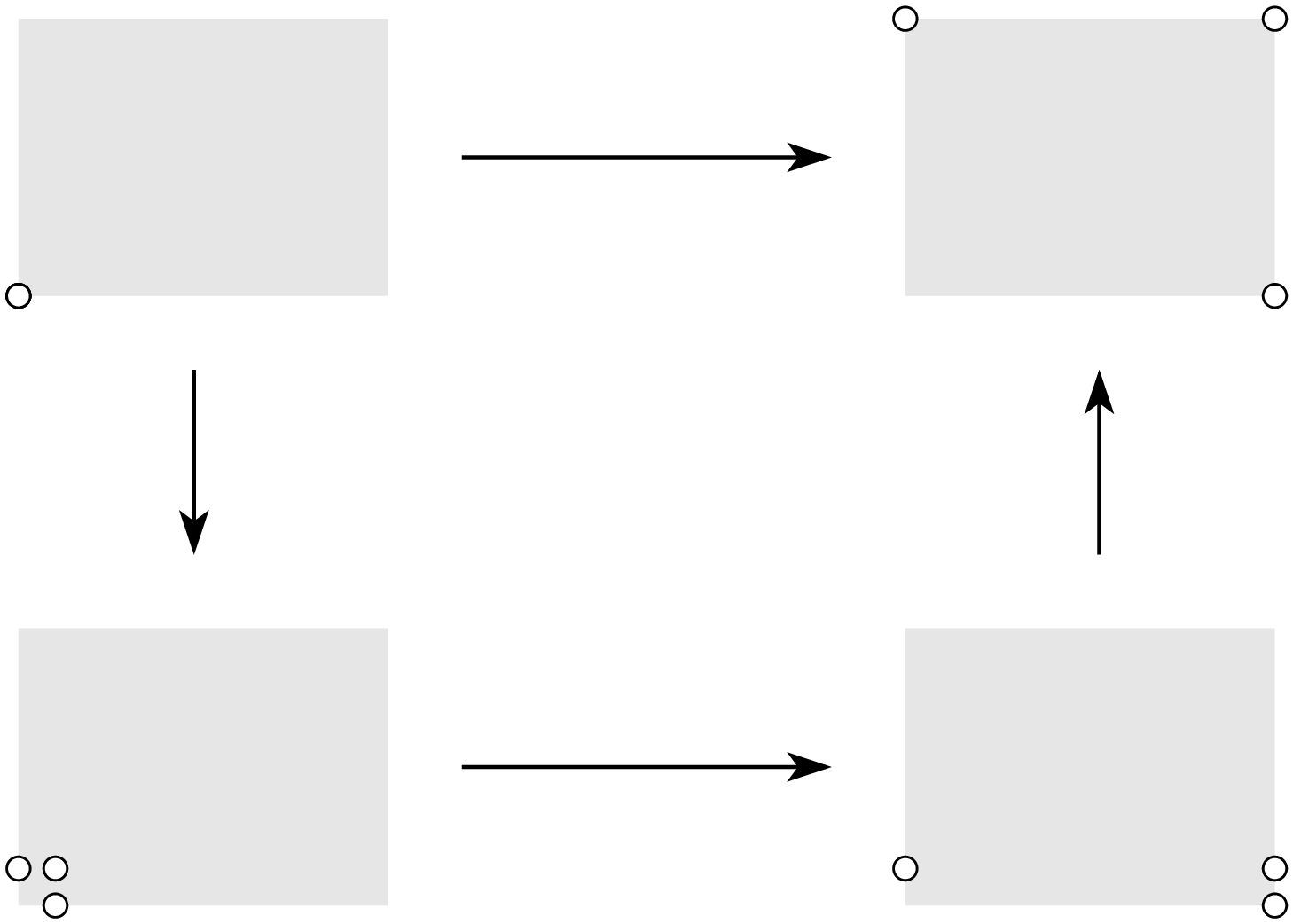}
\put(-168,172){stabilization}
\put(-233,100){\parbox{2cm}{local\\stabilization}}
\put(-163,25){exchange}
\put(-37,100){exchange}}
\caption{Decomposition of a stabilization into a local stabilization followed
by exchange moves}\label{stab-local-global-fig}
\end{figure}
The decomposition is shown in Figure~\ref{stab-local-global-fig} in one of the four possible cases.
The other cases are similar.

Let~$[\theta_1;\theta_2]\times[\varphi_1;\varphi_2]$
be the grey rectangle in the pictures. Then all changes of the links represented by these
diagrams occur in the $3$-ball~$[\theta_1;\theta_2]*[\varphi_1;\varphi_2]$.
One can disturb this $3$-ball slightly to obtain a $3$-ball~$B$ such that the interior of~$B$
intersect each of the four links in an open unknotted arc. This implies that
Figure~\ref{stab-local-global-fig} is a `commutative diagram' for the respective
morphisms.
\end{proof}

\begin{lemm}\label{stab-related-by-exch-lem}
Let~$R\xmapsto{\eta_1}R_1$ and~$R\xmapsto{\eta_2}R_2$ be two stabilizations
of the same oriented type and performed on the same connected component of~$R$.
Then the transformation~$R_1\xmapsto{\eta_2\circ\eta_1^{-1}}R_2$
can be decomposed into exchange moves.
\end{lemm}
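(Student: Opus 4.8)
The plan is to reduce everything to the local case via Lemma~\ref{stab-local-lem}, and then to argue that two local stabilizations of the same oriented type, applied to the same connected component, differ only by where along that component they are inserted --- so that moving one insertion point to the other is achieved by a sequence of exchange moves. First I would apply Lemma~\ref{stab-local-lem} to each of the two stabilizations $R\xmapsto{\eta_1}R_1$ and $R\xmapsto{\eta_2}R_2$, writing each as a local stabilization followed by two exchange moves. Since the trailing exchange moves are invertible by exchange moves, it suffices to treat the case where both $\eta_1$ and $\eta_2$ are \emph{local} stabilizations of the same oriented type on the same component; the general statement then follows by composing with the (exchange-move) corrections supplied by the lemma.

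For the local case, the key observation is that a local stabilization of a fixed oriented type is determined, up to exchange moves, by the choice of an edge of the diagram $R$ (equivalently, a point on the link $\widehat R$) at which the little `spike' is grown, together with the combinatorial side data fixing the oriented type. Two such insertions on the same connected component are related by sliding the insertion point along that component from the edge used by $\eta_1$ to the one used by $\eta_2$. I would realize this slide as a finite sequence of exchange moves: each time the insertion point is pushed past one vertex (or one edge) of the diagram, the configuration changes by a single exchange move, and the oriented type of the little stabilizing spike is preserved because the spike is local and its orientation data is carried along unchanged. Concatenating these gives a chain of exchange moves taking $R_1$ to $R_2$; and because exchange moves induce the morphisms associated with them, the induced morphism of this chain is forced to be exactly $\eta_2\circ\eta_1^{-1}$, since both sides are morphisms in $\mathrm{Mor}(\widehat R_1,\widehat R_2)$ agreeing outside a small ball where the link is unknotted (so the $\mathbb D^2$-move rigidity from the discussion after the $\mathbb D^2$-move definition pins down the morphism uniquely).

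The main obstacle I anticipate is \textbf{bookkeeping of the induced morphism}, not the existence of an exchange-move chain connecting $R_1$ and $R_2$ combinatorially. It is geometrically clear that the two diagrams are exchange-equivalent, but the statement asserts that the \emph{specific} composite $\eta_2\circ\eta_1^{-1}$ is induced, and one must check that no `extra' automorphism of the link (an element of the symmetry group, or a nontrivial element of $\pi_2(\mathbb S^3\smallsetminus\widehat R_1)$ acting on $\mathbb D^2$-moves) sneaks in during the slide. I would control this by keeping all the moves supported in a thickened neighborhood of an embedded arc on the component running from the first insertion point to the second, so that the entire transformation is represented by a homeomorphism supported near an unknotted arc and hence isotopic to the identity relative to the rest of the link; this forces the induced morphism to coincide with the geometrically evident one $\eta_2\circ\eta_1^{-1}$. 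The remaining verification --- that pushing the spike past a single vertex is literally an exchange move of the correct oriented type --- is a routine case check over the finitely many local configurations, entirely analogous to the case analysis in the proof of Lemma~\ref{stab-local-lem}.
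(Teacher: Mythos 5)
Your proposal follows essentially the same route as the paper's proof: reduce to local stabilizations via Lemma~\ref{stab-local-lem}, then slide the stabilization point along the component step by step (the paper does this by induction on the length of an edge-path from $v_1$ to $v_2$, realizing each step by exchange moves), and pin down the induced morphism by confining everything to a ball meeting the relevant links in unknotted open arcs. The only cosmetic difference is that the paper uses three exchange moves per elementary slide rather than one, which falls under the routine case check you already flag.
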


\begin{proof}
Due to Lemma~\ref{stab-local-lem} it suffices to prove the claim in the case when
both given stabilizations are local. We can also assume that the two
vertices~$v_1$ and~$v_2$ at which the stabilizations occur form an edge of~$R$. The general
case follows from this one by induction on the length of
the shortest sequence of vertices of~$R$ starting from~$v_1$ and ending at~$v_2$
in which any two successive ones form an edge of~$R$.
\begin{figure}[ht]
\centerline{\includegraphics[scale=.5]{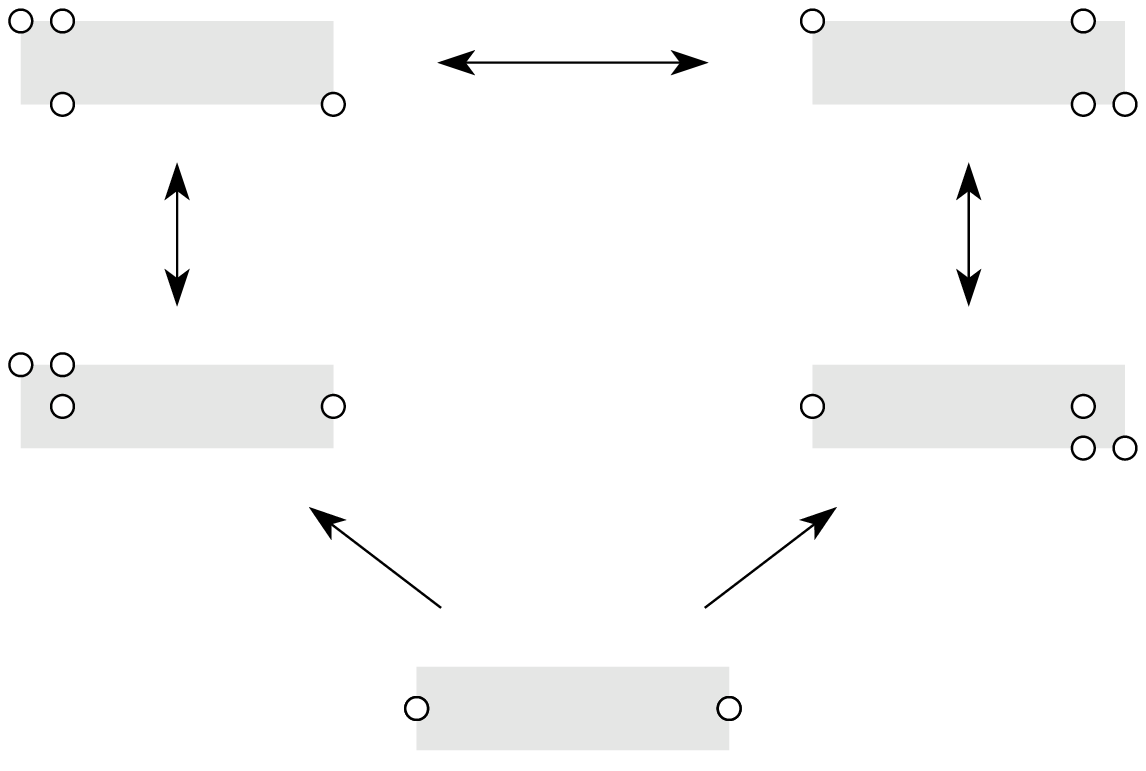}
\put(-157,175){exchange}\put(-229,127){exchange}\put(-38,127){exchange}
\put(-262,44){stabilization at $v_1$}\put(-90,44){stabilization at $v_2$}
\put(-142,-5){$R$}\put(-190,13){$v_1$}\put(-95,13){$v_2$}\put(-237,67){$R_1$}\put(-48,67){$R_2$}}
\caption{The results of two stabilizations of the same oriented type on the same
component are related by exchange moves}\label{stab-local-edge-fig}
\end{figure}

The proof in the case when the two stabilizations are local and the vertices~$v_1,v_2$
form a horizontal edge of~$R$ is illustrated in Figure~\ref{stab-local-edge-fig}. Other possible
cases are symmetric to this one.

Let the grey rectangle in the pictures of Figure~\ref{stab-local-edge-fig}
be $[\theta_1;\theta_2]\times[\varphi_1;\varphi_2]$. Then by a small perturbation
of~$[\theta_1;\theta_2]*[\varphi_1;\varphi_2]$ one can obtain a $3$-ball
that meets each of the links represented by the five diagrams in Figure~\ref{stab-local-edge-fig}
in an open arc. This ensures the `commutativity' of the diagram.
\end{proof}

Let~$R_1\mapsto R_2$ and~$R_1'\mapsto R_2'$ be elementary moves.
We say that the move~$R_1'\mapsto R_2'$ \emph{does the same thing as the
move~$R_1\mapsto R_2$ does} if the following holds:
\begin{enumerate}
\item
$R_1'\smallsetminus R_2'=R_1\smallsetminus R_2$;
\item
$R_2'\smallsetminus R_1'=R_2\smallsetminus R_1$;
\item
the two moves are associated with the same rectangle;
\item
the orientations of vertices in~$R_1'\triangle R_2'$ are the same as those in~$R_1\triangle R_2$;
\item
the connected component of~$R_1'$ modified by the move~$R_1'\mapsto R_2'$
has the same number as that modified by the move~$R_1\mapsto R_2$ has.
\end{enumerate}

\begin{lemm}\label{stab-first-lem}
Let~$R_1\mapsto R_2$ be an arbitrary elementary move, and let~$R_2\mapsto R_3$
be a stabilization. Then there is a chain of elementary moves
$$R_1\mapsto R_2'\mapsto R_3'\mapsto\ldots\mapsto R_n'=R_3$$
that induces the same morphism as the chain~$R_1\mapsto R_2\mapsto R_3$ does,
and the following holds\emph:
\begin{enumerate}
\item
the move~$R_1\mapsto R_2'$ is a stabilization having the same oriented type as~$R_2\mapsto R_3$ has
and modifying the same connected component\emph;
\item
the move~$R_2'\mapsto R_3'$ does the same thing as~$R_1\mapsto R_2$ does\emph;
\item
all other moves are exchanges.
\end{enumerate}
\end{lemm}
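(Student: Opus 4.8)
The plan is to argue by cases on the relative position of the rectangles associated with the two moves $R_1\mapsto R_2$ and $R_2\mapsto R_3$, and in each case exhibit the desired reordering explicitly. Denote by $\rho=[\theta_1;\theta_2]\times[\varphi_1;\varphi_2]$ the rectangle of the move $R_1\mapsto R_2$ and by $\sigma$ the rectangle of the stabilization $R_2\mapsto R_3$. The key observation is that a stabilization is a purely local modification: it replaces a single vertex $v\in R_2$ by three new vertices clustered in a small rectangle $\sigma$ one of whose corners is $v$, together with inserting a short edge. Thus the vertex $v$ that gets stabilized in $R_2$ either already belongs to $R_1$ (i.e.\ $v\in R_1\cap R_2$), or $v$ is one of the two newly created vertices of the move $R_1\mapsto R_2$ (i.e.\ $v\in R_2\smallsetminus R_1$).

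The easy case is $v\in R_1\cap R_2$. Here the stabilization can be performed first, directly on $R_1$, at the very same vertex $v$ and in the very same small rectangle $\sigma$; I would take $R_1\mapsto R_2'$ to be exactly this stabilization, which by construction has the same oriented type and modifies the same connected component. One must check that the rectangle $\rho$ of the original move $R_1\mapsto R_2$ is still available after the stabilization, i.e.\ that the three new vertices do not obstruct the move in the sense of condition~(5) of Definition~\ref{moves-def}. Since $\sigma$ can be chosen arbitrarily small and placed on whichever side of $v$ avoids the interior of $\rho$, this is arranged by a suitable choice of $\sigma$; the move $R_2'\mapsto R_3'$ is then the same move as $R_1\mapsto R_2$, literally doing the same thing, and no intermediate exchanges are needed (so $n=3$ in the statement). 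The commutativity of the induced morphisms follows, as in the proofs of Lemmas~\ref{stab-local-lem} and~\ref{stab-related-by-exch-lem}, by perturbing the union of the two $3$-balls $\widehat\rho$, $\widehat\sigma$ to a single $3$-ball meeting each of the four links in an unknotted arc.

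The harder case is $v\in R_2\smallsetminus R_1$, where the vertex being stabilized was itself just created by the move $R_1\mapsto R_2$. The idea is to first perform, on $R_1$, a stabilization $R_1\mapsto R_2'$ of the prescribed oriented type at the appropriate preimage location on the same component; because the move $R_1\mapsto R_2$ is combinatorially a translation of one edge across the rectangle $\rho$, stabilizing ``before'' the translation produces a diagram that, after the translation is carried out by the move $R_2'\mapsto R_3'$, differs from $R_3$ only in the placement of the small stabilizing rectangle. I expect this discrepancy to be absorbable by a bounded number of exchange moves, exactly as in Lemma~\ref{stab-related-by-exch-lem}: two stabilizations of the same oriented type on the same component differ only by exchanges. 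The main obstacle will be bookkeeping in this second case, namely verifying that after stabilizing first one can still realize a move that ``does the same thing'' as $R_1\mapsto R_2$ (conditions (1)--(5) of the definition preceding the lemma), and that the leftover adjustment is genuinely a chain of exchanges rather than something requiring a further stabilization or destabilization. To control this I would reduce, via Lemma~\ref{stab-local-lem}, to the case where $R_2\mapsto R_3$ is a local stabilization, and then appeal to Lemma~\ref{stab-related-by-exch-lem} to clean up the position of the stabilizing rectangle. Throughout, equality of the induced morphisms is checked by the same $3$-ball perturbation argument, ensuring each rearranged diagram is a genuine ``commutative diagram'' of $\mathbb D^2$-moves.
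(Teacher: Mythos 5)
Your overall strategy---perform a stabilization of the prescribed oriented type first, then carry out the original move, then absorb the discrepancy by exchange moves via Lemma~\ref{stab-related-by-exch-lem}---is the strategy of the paper's proof, and your ``harder case'' is essentially the argument the paper runs uniformly. However, your ``easy case'' contains a genuine error. You claim that when the stabilized vertex~$v$ lies in $R_1\cap R_2$ the rectangle~$\sigma$ ``can be chosen arbitrarily small and placed on whichever side of~$v$ avoids the interior of~$\rho$.'' It cannot: for a fixed sign of~$v$, each of the four quadrants at~$v$ realizes exactly one oriented type of local stabilization (see Figure~\ref{oriented-types-fig}), so the quadrant is forced by the oriented type of~$R_2\mapsto R_3$ and may well point into~$\rho$. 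Moreover, even when the quadrant is favourable, shrinking or relocating~$\sigma$ changes the resulting diagram, so your conclusion that $R_3'=R_3$ and ``no intermediate exchanges are needed ($n=3$)'' is inconsistent with your own repositioning of~$\sigma$; the exchanges supplied by Lemma~\ref{stab-related-by-exch-lem} are needed in this case as well.

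The paper avoids both the case split and the quadrant problem by never trying to keep the stabilization near its original location: it picks \emph{any} vertex~$v$ of~$R_1$ on the component with the correct number that is not a corner of~$r$ (such a vertex exists, since a component has at least four vertices and at most three vertices of~$R_1$ are corners of~$r$), performs a local stabilization of the required oriented type there with the new vertices also outside~$r$, then performs the move doing the same thing as $R_1\mapsto R_2$ to reach some~$R_3'$, and observes that $R_2\mapsto R_3'$ is then a stabilization of the same oriented type on the same component as $R_2\mapsto R_3$, so Lemma~\ref{stab-related-by-exch-lem} supplies the closing chain of exchanges together with the required equality of morphisms. One further detail to repair in your write-up: the equality of induced morphisms for the commuting square is not obtained by perturbing $\widehat\rho\cup\widehat\sigma$ to a single ball meeting each link in one unknotted arc---the two balls may sit on distant parts of the link, so that any connected enlargement of their union meets the link in two arcs---but follows simply because the two $\mathbb D^2$-moves have disjoint supports and therefore commute.
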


\begin{proof}
Let~$r$ be the rectangle with which the move~$R_1\mapsto R_2$ is associated,
and let~$t$ be the oriented type of the stabilization~$R_2\mapsto R_3$.
Choose a vertex~$v$ of~$R_1$ outside of~$r$ on the connected component having
the same number as the one modified by the stabilization~$R_2\mapsto R_3$.
Let~$R_1\mapsto R_2'$ be a local stabilization at the vertex~$v$ such that
the vertices in~$R_2'\smallsetminus R_1$ also lie outside of~$r$. Then
there is a unique rectangular diagram~$R_3'$ and an elementary move~$R_2'\mapsto R_3'$
that does the same thing as~$R_1\mapsto R_2$ does. Fix this~$R_3'$ from now on.

One can see that~$R_2\mapsto R_3'$ is then a stabilization doing the same thing as~$R_1\mapsto R_2'$
does, and the two
chains~$R_1\mapsto R_2\mapsto R_3'$ and~$R_1\mapsto R_2'\mapsto R_3'$
induce the same morphism. An application of Lemma~\ref{stab-related-by-exch-lem} concludes
the proof.
\end{proof}

\begin{proof}[Proof of Proposition~\ref{sym-no-stab-prop}]
Let~$s$ be a chain of elementary moves starting from and arriving at~$R$.
Suppose that~$s$ includes a stabilization of an oriented type~$t$
on the connected component number~$i$. Then it also must include
a destabilization of oriented type~$t$ on this component.

By induction on the number of
elementary moves preceding the first such stabilization, and using Lemma~\ref{stab-first-lem}
we see that there is a chain of elementary moves~$s'$
such that:
\begin{enumerate}
\item
the first move in~$s'$ is
a stabilization of oriented type~$t$ on the connected component number~$i$;
\item
$\widehat s=\widehat s'$;
\item
$n_{i,t}(s')=n_{i,t}(s)$.
\end{enumerate}
Applying the same argument to the reversed chain~$s^{-1}$ we can
additionally achieve:
\begin{enumerate}
\setcounter{enumi}{3}
\item
the last move in~$s'$ is a destabilization of oriented type~$t$ on the connected component number~$i$.
\end{enumerate}

Let~$R\xmapsto\eta R_1$ be a stabilization of oriented type~$t$ on the connected component number~$i$.
It follows from the above argument and Lemma~\ref{stab-related-by-exch-lem}
that there exists a chain of elementary moves~$s''$ starting from and arriving at~$R_1$
such that~$\widehat s''=\eta\circ\widehat s\circ\eta^{-1}$ and~$n_{i,t}(s'')=n_{i,t}(s)-1$.

Now, for~$(i,t)\in X$, let~$N_{i,t}=\max_jn_{i,t}(s_j)$. We may assume without loss of generality
that~$n_{i,t}(s_j)=N_{i,t}$ for all~$j\in\{1,\dots,m\}$. Indeed, if~$n_{i,t}(s_j)<N_{i,t}$
we can add more stabilizations immediatedly followed by the inverse destabilizations
to~$s_j$ to increase~$n_{i,t}(s_j)$ without changing the morphism induced by~$s_j$.

Let~$R\xmapsto{\zeta}R'$ be a chain of stabilizations that includes
exactly~$N_{i,t}$ stabilizations of type~$t$ on the connected component number~$i$
provided~$(i,t)\in X$. By induction on~$\sum_{(i,t)\in X}N_{i,t}$ we obtain
chains~$s_1',\ldots,s_r'$ of elementary moves starting from and arriving at~$R'$
such that
\begin{enumerate}
\item
$n_{i,t}(s_j')=0$ for all~$(i,t)\in X$, $j\in\{1,\ldots,m\}$;
\item
$s_j'=\zeta\circ s_j\circ\zeta^{-1}$ for all~$j\in\{1,\ldots,m\}$.
\end{enumerate}
Clearly, the morphisms~$\widehat s_j'$, $j=1,\ldots,m$, generate the symmetry group~$\mathrm{Sym}(\widehat{R'})$.
The claim follows.
\end{proof}

The following theorem is a reformulation of Theorem~7 in~\cite{bypasses}.

\begin{theo}\label{r3-th}
Suppose that two rectangular diagrams~$R_1$ and~$R_2$ represent isotopic links. Then there is a
third rectangular diagram~$R_3$ such that~$\mathscr L_+(R_3)=\mathscr L_+(R_1)$
and~$\mathscr L_-(R_3)=\mathscr L_-(R_2)$.
\end{theo}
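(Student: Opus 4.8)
The plan is to exploit the two independent characterizations of Legendrian equivalence in terms of elementary moves provided by Theorem~\ref{leg-rect-th}(ii), together with the very general ``commutation'' statement of Theorem~\ref{commute-th}. Since $R_1$ and $R_2$ represent isotopic links, there is some morphism $\eta\in\mathrm{Mor}(R_1,R_2)$, and by Theorem~\ref{repr-by-elem-move-th} this morphism is induced by a chain $s$ of elementary moves, so we obtain a transformation $R_1\xmapsto{\widehat s}R_2$. This is exactly the situation to which Theorem~\ref{commute-th} applies.

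\begin{proof}[Proof sketch]
Since $R_1$ and $R_2$ represent isotopic links, the set $\mathrm{Mor}(R_1,R_2)$ is non-empty; pick any $\eta\in\mathrm{Mor}(R_1,R_2)$. By Theorem~\ref{repr-by-elem-move-th}, $\eta$ is induced by a chain $s$ of elementary moves, giving a transformation $R_1\rightsquigarrow R_2$ represented by a sequence of elementary moves. Apply Theorem~\ref{commute-th} to this transformation: it decomposes as $R_1\rightsquigarrow R_3$ followed by $R_3\rightsquigarrow R_2$, where the first sequence of elementary moves contains no type~II stabilizations or destabilizations, and the second contains no type~I stabilizations or destabilizations.

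It remains to identify the Legendrian types of $R_3$. The sequence of moves realizing $R_1\rightsquigarrow R_3$ uses no type~II (de)stabilizations; by Theorem~\ref{leg-rect-th}(ii) (the $\xi_-$ case), such a sequence preserves $\mathscr L_-$, so $\mathscr L_-(R_3)=\mathscr L_-(R_1)$. Symmetrically, the sequence realizing $R_3\rightsquigarrow R_2$ uses no type~I (de)stabilizations, and by Theorem~\ref{leg-rect-th}(ii) (the $\xi_+$ case) it preserves $\mathscr L_+$, whence $\mathscr L_+(R_3)=\mathscr L_+(R_2)$. The roles of $R_1$ and $R_2$ are thus distributed across the two halves of the decomposition in precisely the asymmetric way the statement demands.
\end{proof}

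The only subtlety I anticipate is a bookkeeping mismatch between the roles of $+$ and $-$ in the target statement versus the decomposition direction in Theorem~\ref{commute-th}: the theorem as stated yields a diagram whose $\mathscr L_-$ agrees with $R_1$ and whose $\mathscr L_+$ agrees with $R_2$, whereas Theorem~\ref{r3-th} asks for $\mathscr L_+(R_3)=\mathscr L_+(R_1)$ and $\mathscr L_-(R_3)=\mathscr L_-(R_2)$. This is resolved simply by applying Theorem~\ref{commute-th} to the \emph{reversed} transformation $R_2\rightsquigarrow R_1$ (equivalently, by swapping the names of $R_1$ and $R_2$ at the outset), so that the half avoiding type~I moves is adjacent to $R_1$ and the half avoiding type~II moves is adjacent to $R_2$. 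Since the statement is symmetric in the two contact structures up to relabeling, no genuine obstacle arises; the entire content is the hard commutation Theorem~\ref{commute-th}, which is being quoted rather than reproved here.
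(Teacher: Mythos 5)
Your overall strategy is legitimate but it is not the paper's: the paper does not prove Theorem~\ref{r3-th} at all, it imports it as a reformulation of Theorem~7 of~\cite{bypasses}. Deriving it instead from Theorem~\ref{commute-th} is logically admissible only because of a fact you could not verify blind, namely that the proof of Theorem~\ref{commute-th} given in the paper never invokes Theorem~\ref{r3-th}: it rests on Theorem~\ref{r3-mor-th} (proved directly by the methods of~\cite{bypasses}) and Theorem~\ref{I+II=>exch-th}. So there is no circularity, but you are deducing the weakest statement in the chain from the strongest; the short route would be to quote Theorem~\ref{r3-mor-th} and simply forget the morphisms, or to cite \cite{bypasses} as the paper does.

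There is also a concrete bookkeeping error in the middle of your argument. By the convention fixed in the introduction and in Theorem~\ref{leg-rect-th}(ii), type~I (de)stabilizations \emph{preserve}~$\mathscr L_+$ and type~II ones preserve~$\mathscr L_-$; hence a chain with no type~II (de)stabilizations preserves~$\mathscr L_+$, not~$\mathscr L_-$ as you assert. With the correct reading, applying Theorem~\ref{commute-th} directly to~$R_1\rightsquigarrow R_2$ gives~$R_1\rightsquigarrow R_3$ with no type~II moves, whence~$\mathscr L_+(R_3)=\mathscr L_+(R_1)$, and~$R_3\rightsquigarrow R_2$ with no type~I moves, whence~$\mathscr L_-(R_3)=\mathscr L_-(R_2)$ --- which is verbatim the conclusion of Theorem~\ref{r3-th}. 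The ``bookkeeping mismatch'' you flag at the end is therefore an artifact of your own inversion, and the proposed fix (reversing the transformation) would, under the correct convention, deliver the statement with~$R_1$ and~$R_2$ interchanged; this is salvageable only because the hypothesis is symmetric in the two diagrams, so one relabels. Correct the type~I/II identification, drop the reversal, and the argument closes cleanly.
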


For rectangular diagrams of a link~$R_1$ and~$R_2$ we denote by~$\mathrm{Mor}_+(R_1,R_2)$
(respectively, by~$\mathrm{Mor}_-(R_1,R_2)$)
the subset of~$\mathrm{Mor}(R_1,R_2)$
consisting of all morphisms that can be induced by a sequence of elementary
moves including only exchange moves and type~I (respectively, type~II) stabilizations
and destabilizations. We also denote by~$\mathrm{Mor}_0(R_1,R_2)$ the subset
of~$\mathrm{Mor}(R_1,R_2)$ consisting of all morphisms that can be induced by a sequence of exchange moves. Accordingly, we use the notation~$\mathrm{Sym}_\pm(R)$
and~$\mathrm{Sym}_0(R)$
for~$\mathrm{Mor}_\pm(R,R)$ and~$\mathrm{Mor}_0(R,R)$, respectively.

The method of~\cite{bypasses} allows to upgrade Theorem~\ref{r3-th} as follows.

\begin{theo}\label{r3-mor-th}
Let~$R_1$ and~$R_2$ be rectangular diagrams of a link. Then, for any morphism~$\eta\in\mathrm{Mor}(R_1,R_2)$,
there exists a rectangular diagram of a link~$R_3$ and morphisms~$\eta_+\in\mathrm{Mor}_+(R_1,R_3)$
and~$\eta_-\in\mathrm{Mor}_-(R_3,R_2)$ such that~$\eta=\eta_-\circ\eta_+$.

Equivalently\emph: any morphism~$\eta\in\mathrm{Mor}(R_1,R_2)$ is induced by a chain
of elementary moves in which all type~I stabilizations and destabilizations occur
before type~II ones.
\end{theo}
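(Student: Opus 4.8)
The plan is to upgrade Theorem~\ref{r3-th} from a statement about $\xi_\pm$-Legendrian types to one about morphisms, in the same spirit as the patch applied in the proof of Theorem~\ref{repr-by-elem-move-th}. First I would record that the two formulations in the statement carry the same data: a factorization $\eta=\eta_-\circ\eta_+$ with $\eta_+\in\mathrm{Mor}_+(R_1,R_3)$ and $\eta_-\in\mathrm{Mor}_-(R_3,R_2)$ amounts to a chain of elementary moves inducing $\eta$ whose first part (from $R_1$ to $R_3$) contains no type~II stabilizations or destabilizations and whose second part (from $R_3$ to $R_2$) contains no type~I ones, with $R_3$ the diagram reached at the transition; since exchange moves carry no type, the type~I (de)stabilizations then all precede the type~II ones. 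Thus it suffices to produce $R_3$ together with two morphisms whose composition is the \emph{given} $\eta$, rather than merely some morphism in $\mathrm{Mor}(R_1,R_2)$.

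Next I would set up the morphism bookkeeping. By Theorem~\ref{repr-by-elem-move-th}, $\eta$ is induced by some chain $s$ of elementary moves from $R_1$ to $R_2$, so the task is to reorder $s$ into a chain with the same induced morphism in which all type~I (de)stabilizations precede all type~II ones. Theorem~\ref{r3-th} already supplies, at the level of Legendrian types, a diagram $R_3$ with $\mathscr L_+(R_3)=\mathscr L_+(R_1)$ and $\mathscr L_-(R_3)=\mathscr L_-(R_2)$; by Theorem~\ref{leg-rect-th}(ii) this is precisely the assertion that $\mathrm{Mor}_+(R_1,R_3)$ and $\mathrm{Mor}_-(R_3,R_2)$ are both nonempty. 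The content that remains is to arrange the composition to equal $\eta$ on the nose. I would do this by examining the construction behind Theorem~\ref{r3-th} in~\cite{bypasses}: it manufactures the separating diagram $R_3$, and the accompanying type~I and type~II chains, out of a chosen isotopy, equivalently a chosen representative homeomorphism of the ambient sphere carrying $\widehat{R_1}$ to $\widehat{R_2}$. Feeding in a homeomorphism representing the prescribed $\eta$ and following it through the construction with the surface technique of~\cite{dp2021,dysha2023}, exactly as in Theorem~\ref{repr-by-elem-move-th}, the two resulting chains induce morphisms $\eta_+\in\mathrm{Mor}_+(R_1,R_3)$ and $\eta_-\in\mathrm{Mor}_-(R_3,R_2)$ whose composition $\eta_-\circ\eta_+$ is forced to be the morphism we started from.

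The hard part will be precisely this naturality claim, because~\cite{bypasses} states its reordering only up to link isotopy and never tracks which homeomorphism realizes the passage through $R_3$. Two difficulties must be confronted. The first is geometric: commuting a type~II move past a type~I move is not a local operation on $\mathbb T^2$, and resolving the interaction of the two associated rectangles requires the bypass and rectangular-diagram-of-a-surface machinery rather than a combinatorial swap; this is where essentially all the weight of~\cite{bypasses} lies. The second is the bookkeeping itself: one must check that every elementary commutation used in that machinery preserves the \emph{induced morphism}, not merely the isotopy class of the link, and that the reordering terminates. It is worth stressing that the discrepancy cannot be cheaply corrected after the fact by a single symmetry: if $\alpha\in\mathrm{Mor}_+(R_1,R_3)$ and $\beta\in\mathrm{Mor}_-(R_3,R_2)$ are chosen arbitrarily, then $\eta\circ\alpha^{-1}$ lies in $\mathrm{Mor}_-(R_3,R_2)$ if and only if the symmetry $\beta^{-1}\circ\eta\circ\alpha^{-1}\in\mathrm{Sym}(R_3)$ happens to lie in $\mathrm{Sym}_-(R_3)$, which is not automatic. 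This is exactly why the morphism has to be carried through the construction from the very beginning instead of being repaired at the end, and verifying that the construction of~\cite{bypasses} does carry it correctly is the crux of the proof.
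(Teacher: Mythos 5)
Your overall strategy coincides with the paper's: upgrade Theorem~\ref{r3-th} by running the bypass machinery of~\cite{bypasses} while tracking the induced morphism, and you correctly observe both that the two formulations of the statement are equivalent and that the factorization cannot be repaired after the fact by an appeal to $\mathrm{Sym}(R_3)$. However, the proposal stops exactly where the actual proof begins: the assertion that feeding a representative homeomorphism of $\eta$ into the construction produces chains whose composition ``is forced to be'' $\eta$ is announced as the crux but never argued, and no mechanism is given for why the commutations in~\cite{bypasses} preserve the induced morphism rather than merely the isotopy class, nor for why the reordering terminates.

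The paper closes this gap with two specific ingredients you would need to supply. First, a well-founded induction on the lexicographically ordered pair $(n_{\mathrm{II}},n)$, where $n_{\mathrm{II}}$ is the number of type~II (de)stabilizations and $n$ the length of the chain; this reduces everything to the single nontrivial configuration in which the chain begins with one type~II stabilization $R_1\mapsto R_1'$ followed only by exchanges and type~I moves (a leading type~II \emph{destabilization} is disposed of by Lemma~\ref{stab-first-lem} applied to the reversed chain), so the morphism bookkeeping is done one type~II move at a time. Second, and decisively, the constructions invoked from~\cite{bypasses} have a naturality property that the proposal does not identify: the chain $s''$ produced by Proposition~2 there \emph{goes in parallel} with $s'$, meaning $\widehat s'$ and $\widehat s''$ are represented by one and the same self-homeomorphism $\phi$ of $\mathbb S^3$; the disc $d$ of the initial stabilization is transported to the bypass disc $\phi(d)$; and the Key Lemma's simplification of $\phi(d)$ yields chains $s_0$, $s_0'$ again represented by a common homeomorphism $\psi$, with the final type~II stabilization's $\mathbb D^2$-move associated with $\psi(\phi(d))$. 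It is this chain of identifications of $\mathbb D^2$-moves via transported discs that yields the identity $\chi\circ\widehat s_0'\circ\widehat s''=\widehat s_0\circ\widehat s'\circ\eta_0$ and hence the equality of induced morphisms. Without isolating this ``same representing homeomorphism'' property, the naturality claim at the heart of your plan remains unproved.
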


\begin{proof}
Let~$s$ be a chain of elementary moves representing~$\eta\in\mathrm{Mor}(R_1,R_2)$.
The proof is by induction on the pair~$(n_{\mathrm{II}},n)$ in which~$n_{\mathrm{II}}$ is the number of type~II
(de)stabilizations, and~$n$ is the total number of moves in~$s$. The pairs are ordered lexicographically.

Clearly, if the first move of~$s$ is not a type~II (de)stabilization or
the last move is not a type~I (de)stabilization, then we can make
the induction step by truncating~$s$.

Otherwise, to make the induction step, it suffices to consider the situation when~$n_{\mathrm{II}}=1$
and the only type~II (de)stabilization in~$s$
is the first move, and to prove, in this case, that~$\eta$
can be induced by a chain of elementary moves in which there is exactly
one type~II (de)stabilization, and it occurs after all type~I (de)stabilizations.

So, from now on, we suppose that the first move in~$s$ is a type~II stabilization or a destabilization,
and all other moves are exchanges or type~I (de)stabilizations. If the first move in~$s$
is a destabilization, then the required statement follows by induction on~$n$ from Lemma~\ref{stab-first-lem}
applied to the reversed chain~$s^{-1}$. Thus, the only non-trivial case is when the first move is a type~II
stabilization.

Let~$R_1\mapsto R_1'$ be the first move in~$s$, and let~$s'$ be the chain~$s$ with the first move deleted.
We assume that~$R_1\mapsto R_1'$ is a type~II stabilization and~$s'$ contains only exchanges and type~I stabilizations
and destabilizations (consult Figure~\ref{bypass-proof-fig}).
\begin{figure}[ht]
\includegraphics{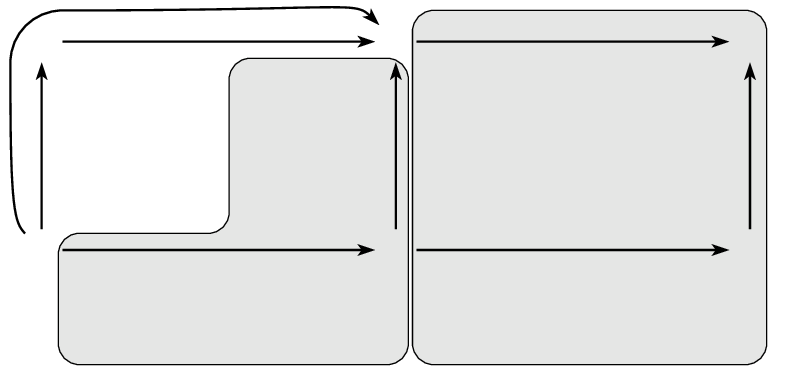}\put(-367,57){$R_1$}\put(-367,157){$R_1'$}
\put(-197,57){$R_2'$}\put(-197,157){$R_2$}
\put(-27,57){$R_3$}\put(-27,157){$R_2''$}
\put(-373,173){$s$}\put(-300,151){$s'$}
\put(-354,110){\parbox{30 pt}{\tiny type~II\\sta\-bi\-li\-za\-tion}}
\put(-55,110){\parbox{30 pt}{\tiny type~II\\sta\-bi\-li\-za\-tion}}
\put(-232,110){\parbox{40pt}{\tiny $\mathbb D^2$-move associated with~$\phi(d)$\\ (creation of\\ a bypass)}}
\put(-343,165){\tiny exchanges and type~I (de)stabilizations}
\put(-120,165){\tiny exchanges}
\put(-335,53){\tiny exchanges and type~I stabilizations}
\put(-170,53){\tiny exchanges and type~I destabilizations}
\put(-327,15){Proof of~\cite[Proposition 2]{bypasses}}
\put(-152,15){Proof of~\cite[Key Lemma]{bypasses}}
\put(-250,65){$s''$}\put(-110,151){$s_0$}\put(-110,65){$s_0'$}
\caption{Induction step in the proof of Theorem~\ref{r3-mor-th}}\label{bypass-proof-fig}
\end{figure}
Proposition~2 in~\cite{bypasses} implies, in the present terms, that
there exists a chain~$s''$ of exchanges and type~I stabilizations starting from~$R_1$
and arriving at some diagram~$R_2'$ such that the Legendrian graphs~$\widehat R_1'\cup\widehat R_1$
and~$\widehat R_2\cup\widehat R_2'$ are Legendrian isotopic (Legendrian isotopy is defined for
Legendrian graphs in exactly the same way as for links; see Definition~\ref{pl-leg-def}).
Moreover, the chain~$s''$, by construction,
`goes in parallel' with~$s'$, which means that the morphisms~$\widehat s'\in\mathrm{Mor}(R_1',R_2)$
and~$\widehat s''\in\mathrm{Mor}(R_1,R_2')$ can be represented by the same self-homemorphism~$\phi$ of~$\mathbb S^3$.

The stabilization~$R_1\mapsto R_1'$ gives rise to a $\mathbb D^2$-move~$(\widehat R_1,\widehat R_1',\eta_0)$
associated with
a two-disc~$d\subset\mathbb S^3$ such that~$d$ is tangent to~$\xi_+$
along~$\widehat R_1\smallsetminus\widehat R_1'$. Therefore, $(\widehat R_2',\widehat R_2,\widehat s'\circ\eta_0\circ\widehat s''^{-1})$
is a $\mathbb D^2$-move associated with the disc~$\phi(d)$, and the homeomorphism~$\phi$ can be adjusted
outside of~$\widehat R_2\cup\widehat R_2'$ so that~$\phi(d)$ is tangent to~$\xi_+$ along~$\widehat R_2'\smallsetminus\widehat R_2$.
Moreover, in terms of~\cite{bypasses}, the pair~$(R_2'\smallsetminus R_2,R_2\smallsetminus R_2')$ is an elementary bypass for~$R_2$
(see~\cite[Definition~7]{bypasses}).

Now, the simplification procedure for~$\phi(d)$ used in the proof of Key Lemma of~\cite{bypasses} (in which we put~$b=1$)
produces two transformations~$R_2\xmapsto{\widehat s_0}R_2''$ and~$R_2'\xmapsto{\widehat s_0'}R_3$
represented by chains of elementary moves~$s_0$ and~$s_0'$, respectively, such that the following conditions are satisfied:
\begin{enumerate}
\item
$s_0$ consists of exchange moves only;
\item
$s_0'$ consists of exchange moves and type~I destabilizations only;
\item
$s_0$ and $s_0'$ `go in parallel', which means that there is a homeomorphism~$\psi:(\mathbb S^3,\widehat
R_2,\widehat  R_2')\rightarrow(\mathbb S^3,\widehat R_2'',\widehat R_3)$
representing both morphisms~$\widehat s_0$ and~$\widehat s_0'$;
\item
the transformation~$R_3\mapsto R_2''$ is a type~II stabilization, and the corresponding $\mathbb D^2$-move~$(\widehat
R_3,\widehat R_2'',\chi)$ is associated with the disc~$\psi(\phi(d))$.
\end{enumerate}
This implies that
$$\chi\circ\widehat s_0'\circ\widehat s''=\widehat s_0\circ\widehat s'\circ\eta_0.$$
Thus, the required chain is obtained by concatenating the following four chains: $s''$, $s_0'$, $(R_3\mapsto R_2'')$,
and~$s_0^{-1}$. This concludes the induction step and the proof of Theorem~\ref{r3-mor-th}.
\end{proof}

\begin{theo}\label{I+II=>exch-th}
For any two rectangular diagrams~$R_1$ and~$R_2$ we have~$\mathrm{Mor}_0(R_1,R_2)=
\mathrm{Mor}_+(R_1,R_2)\cap\mathrm{Mor}_-(R_1,R_2)$.

In other words,
let~$R_1\xmapsto{\eta}R_2$ be a transformation of
rectangular diagrams, and let~$s',s''$ be two chains of elementary
moves inducing~$\eta$ such that
all stabilizations and destabilizations in~$s'$ \emph(respectively, $s''$\emph) are of type~I
\emph(respectively, of type~II\emph). Then~$R_1$ and~$R_2$ are related by a chain
of exchange moves.
\end{theo}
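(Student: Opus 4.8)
The plan is to prove the two inclusions separately. The inclusion $\mathrm{Mor}_0(R_1,R_2)\subseteq\mathrm{Mor}_+(R_1,R_2)\cap\mathrm{Mor}_-(R_1,R_2)$ is immediate from the definitions: a chain consisting of exchange moves only contains no type~I and no type~II stabilizations or destabilizations, so it simultaneously witnesses membership in~$\mathrm{Mor}_+$ and in~$\mathrm{Mor}_-$. All the work is in the reverse inclusion, so from now on I fix a morphism~$\eta\in\mathrm{Mor}_+(R_1,R_2)\cap\mathrm{Mor}_-(R_1,R_2)$ together with a chain~$s_+$ of exchanges and type~I (de)stabilizations and a chain~$s_-$ of exchanges and type~II (de)stabilizations, both inducing~$\eta$, and I aim to produce a chain of exchange moves inducing~$\eta$.

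The first step is to normalize~$s_+$. Using Lemma~\ref{stab-first-lem} repeatedly (the same mechanism that drives the proof of Proposition~\ref{sym-no-stab-prop}), I would push all type~I stabilizations to the front and all type~I destabilizations to the back of~$s_+$ without changing the induced morphism. Read Legendrian-ly via Theorem~\ref{leg-rect-th}, the resulting chain exhibits~$\widehat R_1$ and~$\widehat R_2$ as $\xi_+$-Legendrian isotopic through an isotopy inducing~$\eta$, while the $\xi_-$-Legendrian type is altered only by a sequence of Legendrian stabilizations and destabilizations. Because~$\eta$ also lies in~$\mathrm{Mor}_-$, the chain~$s_-$ shows that this net sequence of $\xi_-$-Legendrian (de)stabilizations in fact realizes a genuine $\xi_-$-Legendrian isotopy inducing~$\eta$. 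Thus~$\widehat R_1$ and~$\widehat R_2$ are simultaneously $\xi_+$- and $\xi_-$-Legendrian isotopic, and both isotopies induce the very same topological morphism~$\eta$.

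At this point the purely combinatorial conclusion, namely that two rectangular diagrams with $\mathscr L_+(R_1)=\mathscr L_+(R_2)$ and $\mathscr L_-(R_1)=\mathscr L_-(R_2)$ are related by exchange moves, is essentially contained in the analysis of~\cite{bypasses} behind Theorem~\ref{r3-th}. I would re-run that analysis, feeding each surviving stabilization/destabilization pair into the bypass-simplification procedure used in the proof of Theorem~\ref{r3-mor-th} (the Key Lemma of~\cite{bypasses}); each such step cancels a (de)stabilization against its partner at the cost of exchange moves only. The essential new ingredient, and the step I expect to be the main obstacle, is to carry the \emph{specific} morphism~$\eta$ through this cancellation rather than merely establishing exchange-equivalence of the underlying diagrams. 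Concretely, every cancellation should be organized as a chain of $\mathbb D^2$-moves whose associated morphisms are pinned down by Proposition~\ref{associated-morphism-prop}, so that their composition is forced to equal~$\eta$; this is what removes the $\pi_2(\mathbb S^3\smallsetminus\widehat R_1)$-ambiguity in the choice of morphism flagged after the definition of $\mathbb D^2$-moves. Once all (de)stabilizations have been removed in this morphism-faithful way, the remaining chain consists of exchange moves and still induces~$\eta$, giving~$\eta\in\mathrm{Mor}_0(R_1,R_2)$. Finally I would note that the statement is precisely the uniqueness-up-to-exchange clause of Theorem~\ref{commute-th}: given two decompositions $\eta=\eta_-^{(1)}\circ\eta_+^{(1)}=\eta_-^{(2)}\circ\eta_+^{(2)}$ as in Theorem~\ref{r3-mor-th}, the morphism $\eta_+^{(2)}\circ(\eta_+^{(1)})^{-1}=(\eta_-^{(2)})^{-1}\circ\eta_-^{(1)}$ lies in $\mathrm{Mor}_+\cap\mathrm{Mor}_-$, so the present theorem forces the two intermediate diagrams to be exchange-equivalent.
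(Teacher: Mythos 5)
Your easy inclusion is fine, but the hard inclusion rests on a false intermediate claim. You assert that ``two rectangular diagrams with $\mathscr L_+(R_1)=\mathscr L_+(R_2)$ and $\mathscr L_-(R_1)=\mathscr L_-(R_2)$ are related by exchange moves'' is essentially contained in~\cite{bypasses}. It is not contained there, and it is not true: by Corollary~\ref{count-coro} the exchange classes with prescribed $\mathscr L_+$ and $\mathscr L_-$ are in bijection with the double cosets $\mathrm{Sym}_+\backslash\mathrm{Mor}/\mathrm{Sym}_-$, which in general has more than one element, and the tables in Section~\ref{example-sec} exhibit explicit pairs of diagrams sharing both Legendrian types yet lying in distinct exchange classes. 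So the hypothesis that $s'$ and $s''$ induce the \emph{same morphism} is not a refinement to be ``carried through'' an otherwise valid combinatorial argument --- it is the entire content of the theorem, and your proposal never supplies the mechanism by which it forces exchange equivalence. The bypass-simplification procedure you invoke (the Key Lemma of~\cite{bypasses}) is the engine behind Theorem~\ref{r3-mor-th}, i.e.\ the factorization $\eta=\eta_-\circ\eta_+$; you give no argument that iterating it cancels all (de)stabilizations against ``partners'' leaving only exchanges, nor that such a process terminates, and pinning down morphisms via Proposition~\ref{associated-morphism-prop} does not by itself produce such an argument. Your closing observation is moreover circular: the uniqueness-up-to-exchange clause of Theorem~\ref{commute-th} is \emph{deduced from} the present theorem (together with Theorem~\ref{r3-mor-th}, via Corollary~\ref{count-coro}), so it cannot be used to prove it.

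The paper's route is entirely different. It patches the proof of Theorem~4.2 of~\cite{dysha18} (via Proposition~5.1 there, which rests on the surface technique of~\cite{dp2021}): from the two chains $s'$ and $s''$ one builds two embeddings $\phi_+$ and $\phi_-$ of a surface into $\mathbb S^3$, and the single place where~\cite{dysha18} uses triviality of the symmetry group of the knot is to conclude that $\phi_+$ and $\phi_-$ are isotopic. The hypothesis that $s'$ and $s''$ induce the same morphism $\eta$ is exactly what replaces that triviality assumption and yields the isotopy of $\phi_+$ and $\phi_-$; the rest of the argument of~\cite{dysha18} is unchanged. If you want to repair your write-up, this is the missing idea you need to import; the normalization of $s_+$ via Lemma~\ref{stab-first-lem} is harmless but does not advance the proof.
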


\begin{proof}
This theorem generalizes Theorem~4.2 of~\cite{dysha18} in a way that allows
to use the same proof with an inessential modification. Namely, in the proof
of Proposition~5.1 of~\cite{dysha18}, which is used to establish Theorem~4.2 there,
two embeddings~$\phi_+$ and~$\phi_-$ of a surface into~$\mathbb S^3$ are
claimed to be isotopic due to the triviality of the symmetry group of the knot,
and this is the only place where this triviality is used.

In the present situation, the equalities~$\mathscr L_+(R_1)=\mathscr L_+(R_2)$
and~$\mathscr L_-(R_1)=\mathscr L_-(R_2)$ are certified by the chains~$s'$
and~$s''$, respectively, which induce the same morphism by hypothesis. This implies
that the embeddings~$\phi_+$ and~$\phi_-$ can be chosen isotopic
due to the nature of their origin. The rest of the proof need not be changed.
\end{proof}

\begin{defi}
For a rectangular diagram of a link~$R$, we call the set of combinatorial
types of all rectangular diagrams obtained from~$R$ by a chain of exchange moves
\emph{the exchange class of~$R$} and denote it by~$[R]$.
\end{defi}

According to Theorem~\ref{leg-rect-th} the equality~$[R_1]=[R_2]$ implies~$\mathscr L_\pm(R_1)=\mathscr L_\pm(R_2)$.
For any exchange class~$c$ and a
rectangular diagram of a link~$R$ such that~$[R]=c$ we define~$\mathscr L_\pm(c)$ as~$\mathscr L_\pm(R)$.

For two rectangular diagrams of a link~$R_1$ and~$R_2$, denote by~$\Lambda(R_1,R_2)$ the set of exchange classes~$c$
such that~$\mathscr L_+(c)=\mathscr L_+(R_1)$ and~$\mathscr L_-(c)=\mathscr L_-(R_2)$.

Theorems~\ref{r3-mor-th} and~\ref{I+II=>exch-th} can be summarized as follows.

\begin{coro}\label{count-coro}
For any two rectangular diagrams~$R_1$ and~$R_2$ there is a bijection
$$\Xi:\mathrm{Sym}_+(R_2)\backslash\mathrm{Mor}(R_1,R_2)/\mathrm{Sym}_-(R_1)\rightarrow
\Lambda(R_1,R_2)$$
such that the equality~$\Xi(x)=[R_3]$ is equivalent to
$$x=\{\eta_-\circ\eta_+:\eta_+\in\mathrm{Mor}_+(R_1,R_3),\
\eta_-\in\mathrm{Mor}_-(R_3,R_2)\}.$$
\end{coro}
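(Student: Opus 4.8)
The plan is to exhibit the inverse of $\Xi$ explicitly and to read off both bijectivity and the stated description of the fibres from Theorems~\ref{r3-mor-th} and~\ref{I+II=>exch-th}. Given an exchange class $[R_3]\in\Lambda(R_1,R_2)$, I have $\mathscr L_+(R_3)=\mathscr L_+(R_1)$ and $\mathscr L_-(R_3)=\mathscr L_-(R_2)$, so by Theorem~\ref{leg-rect-th}(ii) both $\mathrm{Mor}_+(R_1,R_3)$ and $\mathrm{Mor}_-(R_3,R_2)$ are non-empty. Picking $\eta_+$ and $\eta_-$ in these two sets and forming $\eta=\eta_-\circ\eta_+\in\mathrm{Mor}(R_1,R_2)$, I would send $[R_3]$ to the double coset of $\eta$; the set $M(R_3):=\{\eta_-\circ\eta_+:\eta_+\in\mathrm{Mor}_+(R_1,R_3),\ \eta_-\in\mathrm{Mor}_-(R_3,R_2)\}$ is precisely the candidate fibre over $[R_3]$.

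First I would check that $M(R_3)$ is a single double coset. Since inverses and composites of type~I (respectively type~II) chains are again of type~I (respectively type~II), for any two factorizations one has
$$\eta_-'\circ\eta_+'=\bigl(\eta_-'\circ\eta_-^{-1}\bigr)\circ\bigl(\eta_-\circ\eta_+\bigr)\circ\bigl(\eta_+^{-1}\circ\eta_+'\bigr),$$
where the left correction factor lies in the symmetry group of $R_2$ and the right one in that of $R_1$, each of the restricted type matching the adjacent factor it replaces; conversely, multiplying a factorization on those two sides by such symmetries keeps it inside $M(R_3)$. Thus $M(R_3)$ is exactly one double coset for the two-sided action by the appropriate $\mathrm{Sym}_\pm$ groups. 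Independence of the chosen representative $R_3$ of its exchange class is then immediate: if $[R_3]=[R_3']$, choose $\mu\in\mathrm{Mor}_0(R_3,R_3')$ (non-empty by definition of the exchange class); since $\mathrm{Mor}_0\subseteq\mathrm{Mor}_+\cap\mathrm{Mor}_-$, replacing $(\eta_+,\eta_-)$ by $(\mu\circ\eta_+,\eta_-\circ\mu^{-1})$ leaves $\eta$ unchanged and identifies $M(R_3)$ with $M(R_3')$. This makes $\Xi^{-1}$ well defined and yields the characterization $\Xi(x)=[R_3]\iff x=M(R_3)$.

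Surjectivity of $\Xi^{-1}$ is exactly Theorem~\ref{r3-mor-th}: any representative $\eta$ of a given double coset factors as $\eta=\eta_-\circ\eta_+$ with $\eta_+\in\mathrm{Mor}_+(R_1,R_3)$ and $\eta_-\in\mathrm{Mor}_-(R_3,R_2)$ for some $R_3$, and then $\mathscr L_+(R_3)=\mathscr L_+(R_1)$ and $\mathscr L_-(R_3)=\mathscr L_-(R_2)$ by Theorem~\ref{leg-rect-th}(ii), so $[R_3]\in\Lambda(R_1,R_2)$ maps to that coset. For injectivity, suppose $M(R_3)=M(R_3')$; picking a common element and writing it both as $\eta_-\circ\eta_+$ (through $R_3$) and as $\eta_-'\circ\eta_+'$ (through $R_3'$), I set
$$\mu=(\eta_-')^{-1}\circ\eta_-=\eta_+'\circ\eta_+^{-1}\in\mathrm{Mor}(R_3,R_3').$$
The first expression shows $\mu\in\mathrm{Mor}_-(R_3,R_3')$ and the second shows $\mu\in\mathrm{Mor}_+(R_3,R_3')$, so $\mu\in\mathrm{Mor}_+\cap\mathrm{Mor}_-=\mathrm{Mor}_0(R_3,R_3')$ by Theorem~\ref{I+II=>exch-th}; hence $R_3$ and $R_3'$ are related by exchange moves and $[R_3]=[R_3']$.

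The two genuinely hard inputs — the factorization of an arbitrary morphism (Theorem~\ref{r3-mor-th}) and the rigidity statement $\mathrm{Mor}_0=\mathrm{Mor}_+\cap\mathrm{Mor}_-$ (Theorem~\ref{I+II=>exch-th}) — are already available, so what remains is purely formal. I expect the only delicate point to be the coset bookkeeping of the second paragraph: one must match, on each side, the symmetry group that genuinely stabilizes $M(R_3)$ with the type of the neighbouring factor in $\eta=\eta_-\circ\eta_+$, and verify that varying $\eta_+$ and $\eta_-$ produces exactly that two-sided action and nothing more. Once this is pinned down, bijectivity and the explicit description of the fibres follow at once.
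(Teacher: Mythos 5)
Your argument is correct and is, in substance, the proof the paper intends: Corollary~\ref{count-coro} is presented there as a direct ``summary'' of Theorems~\ref{r3-mor-th} and~\ref{I+II=>exch-th} with no separate proof, and your four verifications (that $M(R_3)$ depends only on $[R_3]$ via $\mathrm{Mor}_0\subseteq\mathrm{Mor}_+\cap\mathrm{Mor}_-$, that $M(R_3)$ is a single double coset, surjectivity from Theorem~\ref{r3-mor-th}, injectivity from Theorem~\ref{I+II=>exch-th} combined with Theorem~\ref{leg-rect-th}(ii)) are exactly the routine steps being elided. The one point you must not leave as ``the appropriate $\mathrm{Sym}_\pm$ groups'' is the coset bookkeeping, because your own computation already settles it and the answer does not match the statement as printed. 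Writing $\mathrm{Mor}_-(R_3,R_2)=\mathrm{Sym}_-(R_2)\circ\eta_-$ and $\mathrm{Mor}_+(R_1,R_3)=\eta_+\circ\mathrm{Sym}_+(R_1)$ gives
$$M(R_3)=\mathrm{Sym}_-(R_2)\circ(\eta_-\circ\eta_+)\circ\mathrm{Sym}_+(R_1),$$
so the fibres of $\Xi$ are double cosets for $\mathrm{Sym}_-(R_2)$ acting by post-composition and $\mathrm{Sym}_+(R_1)$ by pre-composition, i.e.\ the source of $\Xi$ should read $\mathrm{Sym}_-(R_2)\backslash\mathrm{Mor}(R_1,R_2)/\mathrm{Sym}_+(R_1)$. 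For the labelling in the corollary as stated ($\mathrm{Sym}_+(R_2)$ on the left, $\mathrm{Sym}_-(R_1)$ on the right) one would additionally need, for instance, that $\eta_-^{-1}\circ\mathrm{Sym}_+(R_2)\circ\eta_-\subseteq\mathrm{Sym}_+(R_3)$ for $\eta_-\in\mathrm{Mor}_-(R_3,R_2)$, which neither your argument nor the quoted theorems supply (and which is exactly the kind of commutation that fails to be automatic here). So state your version of the double coset space explicitly and record the discrepancy with the printed formula; everything else in your write-up goes through as is.
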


In many cases, this statement together with the knowledge of the symmetry
group of the given link type~$\mathscr L$ allows one to distinguish Legendrian types within~$\mathscr L$.
For instance, suppose that the symmetry group~$G$ of~$\mathscr L$ is finite,
and we have found~$m=|G|$ rectangular diagrams~$R_1,\ldots,R_m$ such that
their exchange classes are pairwise distinct and~$\mathscr L_+(R_1)=\mathscr L_+(R_2)=\ldots=\mathscr L_+(R_m)$,
$\mathscr L_-(R_1)=\mathscr L_-(R_2)=\ldots=\mathscr L_-(R_m)$. Then for any
rectangular diagram~$R$ such that the exchange class of~$R$ is not contained in~$\{[R_1],\ldots,[R_m]\}$,
we have either~$\mathscr L_+(R)\ne\mathscr L_+(R_1)$ or~$\mathscr L_-(R)\ne\mathscr L_-(R_1)$ (or both).

Examples are given in Section~\ref{example-sec}.

We conclude this section with the proof of Theorem~\ref{commute-th}].
Without condition~(2) in the formulation, this theorem is equivalent to Corollary~\ref{count-coro}.
To prove that condition~(2) can also be maintained for the proclaimed decomposition, we first write it more formally.
To this end, we introduce the following definition.

\begin{defi}
A transformation of rectangular diagrams
that can be decomposed into a chain of elementary moves in which all but one moves are exchanges
and the remaining one is a stabilization or a destabilization will be called \emph{a leap}.
It will be said to be of \emph{type~I} or \emph{type~II} depending on the type of a stabilization
or a destabilization in its decomposition.
\end{defi}

Suppose that a transformation~$R\xmapsto\eta R'$ is decomposed into
a chain of leaps
\begin{equation}\label{RRRRR-orig-eq}
R=R_0\xmapsto{\eta_1}R_1\xmapsto{\eta_2}R_2\xmapsto{\eta_3}\ldots\xmapsto{\eta_N}R_N=R'.
\end{equation}
Let~$k_1,k_2,\ldots,k_p$ and~$\ell_0,\ell_1,\ldots,\ell_{q-1}$ be the maximal subsequences of
the sequence~$0,1,2,\ldots,N$ such that all leaps~$R_{k_i-1}\xmapsto{\eta_{k_i}}R_{k_i}$
are of type~I, and all leaps~$R_{\ell_i}\xmapsto{\eta_{\ell_i+1}}R_{\ell_i+1}$ are of type~II. (We obviously have~$p+q=N$.)
We also put~$k_0=0$, $\ell_q=N$.

The first assertion of Theorem~\ref{commute-th} means that
there are a decomposition of the transformation~$R\xmapsto\eta R'$ into another chain of leaps
\begin{equation}\label{RRRRR-eq}
R=R_0'\xmapsto{\eta_1'}R_1'\xmapsto{\eta_2'}R_2'\xmapsto{\eta_3'}\ldots\xmapsto{\eta_p'}R_p'
\xmapsto{\eta_1''}R_{p+1}'\xmapsto{\eta_2''}R_{p+2}'\xmapsto{\eta_3''}\ldots\xmapsto{\eta_q''}R_N'=R'
\end{equation}
and a collection of morphisms
\begin{equation}\label{zeta-xi-eq}
\zeta_i\in\mathrm{Mor}_-(R_i',R_{k_i}),\
i=0,1,\ldots,p,\quad\text{and}\quad
\xi_{p+j}\in\mathrm{Mor}_+(R_{\ell_j},R_{p+j}'),\
j=0,1,\ldots,q,
\end{equation}
such that the leaps~$R_{i-1}'\xmapsto{\eta_i'}R_i'$, $i=1,\ldots,p$,
(respectively, $R_{p+j-1}'\xmapsto{\eta_j''}R_{p+j}'$, $j=1,\ldots,q$)
are of type~I (respectively, type~II) and the following diagrams are commutative:
\begin{equation}\label{comm-diag-eq}
\begin{matrix}R_{k_{i-1}}&\xmapsto{\eta_{k_{i-1}+1}}\ldots\xmapsto{\ \ \,\eta_{k_i}\ \ \,}&R_{k_i}
&\hskip1.5cm&
R_{\ell_{j-1}}&\xmapsto{\eta_{\ell_{j-1}+1}}\ldots\xmapsto{\ \ \,\eta_{\ell_j}\ \ \,}&R_{\ell_j}
\\[-1mm]
\hskip-2mm\begin{rotate}{-90}$\longmapsfrom$\end{rotate}\put(-17,-10){$\zeta_{i-1}$}&&
\hskip-2mm\begin{rotate}{-90}$\longmapsfrom$\end{rotate}\put(5,-10){$\zeta_i$}&&
\hskip-2mm\begin{rotate}{-90}$\longmapsto$\end{rotate}\put(-28,-10){$\xi_{p+j-1}$}&&
\hskip-2mm\begin{rotate}{-90}$\longmapsto$\end{rotate}\put(5,-10){$\xi_{p+j}$}
\\[4mm]
R_{i-1}'&\xmapsto{\hskip1.27cm\eta_i'\hskip1.27cm}&R_i'
&&
R_{p+j-1}'&\xmapsto{\hskip1.24cm\eta_j''\hskip1.24cm}&R_{p+j}'
\end{matrix}
\end{equation}
$i=1,\ldots,p$, $j=1,\ldots,q$.

\begin{lemm}\label{leap-lem}
Let~$R_1\xmapsto\xi R_2$ be a first type leap, and let~$R_3\xmapsto{\xi'}R_4$ be a transformation such that~$\xi'\in\mathrm{Mor}_+(R_3,R_4)$.
Suppose that there exist~$\zeta\in\mathrm{Mor}_-(R_2,R_4)$ and~$\zeta'\in\mathrm{Mor}_-(R_1,R_3)$ such that
$\zeta\circ\xi=\xi'\circ\zeta'$. Then~$R_3\xmapsto{\xi'}R_4$ is a leap.
\end{lemm}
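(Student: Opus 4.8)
We have a type I leap $R_1 \xmapsto{\xi} R_2$ (so $\xi \in \mathrm{Mor}_+(R_1,R_2)$, and $\xi$ is realized by a chain of exchanges plus exactly one type I (de)stabilization), together with a type I transformation $R_3 \xmapsto{\xi'} R_4$. The commuting square $\zeta \circ \xi = \xi' \circ \zeta'$ relates these via type II morphisms $\zeta, \zeta'$ down the sides. We must conclude that $\xi'$ is not merely in $\mathrm{Mor}_+(R_3,R_4)$ but is actually a \emph{leap} — i.e.\ its type I content is a single (de)stabilization, with everything else an exchange.

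**The plan.** The key quantitative invariant is the number of (de)stabilizations of each oriented type that a type I morphism must perform. The plan is to track the oriented-type stabilization counts $n_{i,t}$ (for $t \in \{\overrightarrow{\mathrm I}, \overleftarrow{\mathrm I}\}$, the type I oriented types) as numerical data attached to each morphism. The crucial observation is that these counts should be genuine \emph{invariants} of a type I morphism, not just of a particular chain realizing it: since a type I morphism preserves $\mathscr L_-$ but changes $\mathscr L_+$ by a controlled Legendrian stabilization, the net change in the $\xi_+$-Legendrian invariants (Thurston–Bennequin and rotation numbers of each component) is determined by $\xi$ alone. First I would make precise that a type I leap performs exactly one type I (de)stabilization and hence changes the relevant Legendrian invariant of $\mathscr L_+$ by the minimal possible amount on a single component.

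**Using the commuting square.** Next I would exploit that $\zeta, \zeta' \in \mathrm{Mor}_-$ are type II morphisms: by Theorem~\ref{leg-rect-th}, they preserve $\mathscr L_+$ entirely. Therefore, reading the relation $\zeta \circ \xi = \xi' \circ \zeta'$ at the level of the $\xi_+$-Legendrian classes and their classical invariants, the net effect of $\xi'$ on the $\mathscr L_+$-invariants must equal the net effect of $\xi$. Since $\xi$ is a type I leap, this net effect is a single-step change on one component (one unit of Thurston–Bennequin together with the corresponding rotation-number change dictated by the oriented type). Because $\xi' \in \mathrm{Mor}_+(R_3,R_4)$ by hypothesis, it is \emph{a priori} induced by \emph{some} chain of exchanges and type I (de)stabilizations; the total algebraic count of type I (de)stabilizations weighted by oriented type is forced by the $\mathscr L_+$-invariant change to be exactly that of a single stabilization (one creation, no cancelling destabilization). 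Any superfluous stabilization in a realizing chain of $\xi'$ must be paired with an inverse destabilization of the same oriented type on the same component, and by Lemma~\ref{stab-related-by-exch-lem} the pair can be absorbed into exchange moves. Thus $\xi'$ admits a realization with precisely one type I (de)stabilization, which is exactly the assertion that $R_3 \xmapsto{\xi'} R_4$ is a leap.

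**The main obstacle.** The delicate point will be justifying that the oriented-type stabilization count is a true invariant of the morphism and that it is additive under composition with $\mathrm{Mor}_-$ morphisms in the way the square requires — in particular, that composing with the type II morphisms $\zeta, \zeta'$ genuinely leaves the type I count unchanged rather than merely leaving $\mathscr L_+$ unchanged up to destabilization. I would handle this by passing to the classical $\xi_+$-Legendrian invariants (which are additive under stabilization with a known sign depending only on the oriented type $\overrightarrow{\mathrm I}$ versus $\overleftarrow{\mathrm I}$) and reading off, from the commuting square, that $\xi'$ and $\xi$ induce identical changes in the pair $(\mathrm{tb}, \mathrm{rot})$ on each component. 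Once the net type I count of $\xi'$ is pinned to that of a single leap, the reduction of any realizing chain to one with a single (de)stabilization is routine via the stabilization-commutation lemmas already established (Lemmas~\ref{stab-local-lem}, \ref{stab-related-by-exch-lem}, and \ref{stab-first-lem}).
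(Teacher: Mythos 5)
There is a genuine gap at the final reduction step, and it sits exactly where the real content of the lemma lies. Your invariant-counting argument establishes only that the \emph{net} number of type~I (de)stabilizations of each oriented type on each component, in any chain realizing~$\xi'$, equals that of a single (de)stabilization. (Incidentally, you have the roles of~$\mathscr L_+$ and~$\mathscr L_-$ reversed: type~I moves preserve~$\mathscr L_+$ and change~$\mathscr L_-$, so the relevant classical invariants are those of~$\mathscr L_-$; that slip is repairable by swapping labels.) But knowing the net count does not let you cancel a stabilization against a later destabilization of the same oriented type. Lemma~\ref{stab-related-by-exch-lem} only compares two stabilizations applied to the \emph{same} diagram; it provides no mechanism for absorbing a stabilization--destabilization pair occurring at different places in a chain into exchange moves. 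After pushing stabilizations to the front and destabilizations to the back via Lemma~\ref{stab-first-lem}, you are left with $k$ stabilizations, then exchanges, then $k\pm1$ destabilizations, and reducing this to a single (de)stabilization amounts to showing that a morphism realizable both with and without certain extra stabilizations is realizable by exchanges alone --- which is precisely the hard Theorem~\ref{I+II=>exch-th}, nowhere invoked in your argument. Calling this reduction ``routine'' begs the question.

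A symptom of the problem: your argument uses the hypothesis $\zeta\circ\xi=\xi'\circ\zeta'$ only through the induced equalities of Legendrian types $\mathscr L_-(R_3)=\mathscr L_-(R_1)$ and $\mathscr L_-(R_4)=\mathscr L_-(R_2)$, so if it worked it would prove the much stronger claim that \emph{every} $\xi'\in\mathrm{Mor}_+(R_3,R_4)$ with the correct net oriented-type counts is a leap, making the morphism-level commuting square superfluous. The paper's proof uses that square essentially: it reduces by symmetry to the destabilization case, pushes the single type~I destabilization in a chain for~$\zeta\circ\xi$ to the end (Lemma~\ref{stab-first-lem}) to write $\zeta\circ\xi=\xi''\circ\zeta''$ with $R_3'\xmapsto{\xi''}R_4$ a type~I destabilization and $\zeta''\in\mathrm{Mor}_-(R_1,R_3')$, and then observes that $(\xi'')^{-1}\circ\xi'=\zeta''\circ(\zeta')^{-1}$ lies in $\mathrm{Mor}_+(R_3,R_3')\cap\mathrm{Mor}_-(R_3,R_3')=\mathrm{Mor}_0(R_3,R_3')$ by Theorem~\ref{I+II=>exch-th}, whence $\xi'=\xi''\circ\eta$ is exchanges followed by one destabilization. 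To repair your proof you would have to route the cancellation of superfluous stabilization pairs through that theorem, at which point you are essentially reproducing the paper's argument.
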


\begin{proof}
Due to the symmetry~$(R_1,R_3)\leftrightarrow(R_2,R_4)$ it suffices to consider the case when~$|R_2|=|R_1|-2$,
that is, when $\mathscr L_-(R_1)\mapsto\mathscr L_-(R_2)$ is a Legendrian
destabilization. It follows from Lemma~\ref{stab-first-lem} that the transformation~$R_1\xmapsto{\zeta\circ\xi}R_4$
can be represented by a chain of elementary moves in which the last move is a type~I destabilization,
and all the others are not type~I stabilizations or destabilizations. Let~$R_3'\xmapsto{\xi''}R_4$ be the last
move in such a chain, and let~$\zeta''=(\xi'')^{-1}\circ\xi'\circ\zeta'$. We have~$\zeta''\in\mathrm{Mor}_-(R_1,R_3')$.

The morphism~$\eta=(\xi'')^{-1}\circ\xi'=\zeta''\circ(\zeta')^{-1}$ belongs to~$\mathrm{Mor}_+(R_3,R_3')\cap
\mathrm{Mor}_-(R_3,R_3')$, hence by Theorem~\ref{I+II=>exch-th} we have~$(\xi'')^{-1}\circ\xi'\in\mathrm{Mor}_0(R_3,R_3')$.
The composition~$\xi''\circ\eta=\xi'$ is, therefore, a morphism represented by a chain of elementary moves
all of which but the last one are exchange moves, and the last move is a type~I destabilization. The claim follows.
\end{proof}

\begin{proof}[Proof of Theorem~\ref{commute-th}]
We have only to show how to construct the decomposition~\eqref{RRRRR-eq} and
the morphisms~\ref{zeta-xi-eq} satisfying~\eqref{comm-diag-eq}.
By induction in~$N$ the general case can be reduced to the one when the first leap in~\eqref{RRRRR-orig-eq}
is of type~II and the last one is of type~I. This is assumed in the sequel. Observe that, in this case,
we have~$k_p=N$ and~$\ell_0=0$.

For each~$i=1,2,\ldots,p$, let~$R_0\xmapsto{\xi_i}R_i'\xmapsto{\zeta_i}R_{k_i}$ be a decomposition of the
transformation obtained by composing the first~$k_i$ transformations in the chain~\eqref{RRRRR-orig-eq}
such that~$\xi_i\in\mathrm{Mor}_+(R_0,R_i')$ and~$\zeta_i\in\mathrm{Mor}_-(R_i',R_{k_i})$.
For each~$j=1,\ldots,q-1$, let~$R_{\ell_j}\xmapsto{\xi_{p+j}}R_{p+j}'\xmapsto{\zeta_{p+j}}R_N$
be a similar decomposition of the transformation obtained by composing the last~$N-\ell_j$
transformations in~\eqref{RRRRR-orig-eq}. We also put~$\xi_0=\zeta_0=\mathrm{id}_{R_0}$
and~$\xi_N=\zeta_N=\mathrm{id}_{R_N}$.

Now, for~$i=1,\ldots,p$, we put~$\eta_i'=\xi_i\circ\xi_{i-1}^{-1}\in\mathrm{Mor}_+(R_{i-1}',R_i')$,
and for~$j=1,\ldots,q$, put~$\eta_j''=\zeta_{p+j}\circ\zeta_{p+j-1}^{-1}\in\mathrm{Mor}_-(R_{p+j-1}',R_{p+j}')$
(consult Figure~\ref{commutative-fig} for an example of the obtained commutative diagram).
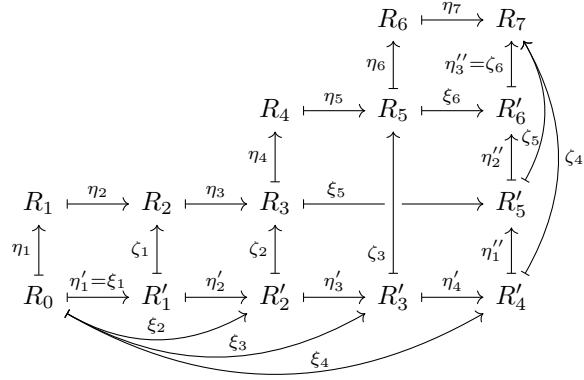
\begin{figure}[ht]
\begin{tikzcd}
&&&R_6\arrow[mapsto]{r}{\eta_7}&R_7\\
&&R_4
\arrow[mapsto]{r}{\eta_5}
&R_5
\arrow[mapsto]{u}{\eta_6}\arrow[mapsto]{r}{\xi_6}
&R_6'
\arrow[mapsto]{u}{\eta_3''=\zeta_6}\\
R_1
\arrow[mapsto]{r}{\eta_2}
&R_2
\arrow[mapsto]{r}{\eta_3}
&R_3
\arrow[mapsto]{u}{\eta_4}\arrow[mapsto]{rr}{\xi_5\hskip15mm}
&&R_5'
\arrow[bend right, mapsto]{uu}{\raisebox{-9mm}{$\zeta_5\hskip-1pt$}}
\arrow[mapsto]{u}{\eta_2''}
\\
R_0
\arrow[mapsto]{u}{\eta_1}
\arrow[mapsto]{r}{\eta_1'=\xi_1}
\arrow[bend right,mapsto]{rr}{\xi_2}
\arrow[bend right,mapsto]{rrr}{\hskip6mm\xi_3}
\arrow[bend right,mapsto]{rrrr}{\hskip12mm\xi_4}
&R_1'
\arrow[mapsto]{u}{\zeta_1}\arrow[mapsto]{r}{\eta_2'}
&R_2'
\arrow[mapsto]{u}{\zeta_2}\arrow[mapsto]{r}{\eta_3'}
&R_3'
\arrow[mapsto,crossing over]{uu}{\raisebox{-43pt}{$\zeta_3$}}\arrow[mapsto]{r}{\eta_4'}
&R_4'
\arrow[mapsto]{u}{\eta_1''}
\arrow[bend right, mapsto]{uuu}[swap]{\zeta_4}\\
\end{tikzcd}
\caption{Morphisms in the proof  of Theorem~\ref{commute-th} in the case $N=7$, $p=4$, $q=3$, $(k_0,k_1,k_2,k_3,k_4)=(0,2,3,5,7)$, $(\ell_0,\ell_1,
\ell_2,\ell_3)=(0,3,5,7)$}\label{commutative-fig}
\end{figure}
The diagrams~\eqref{comm-diag-eq} will be commutative by construction.
All transformations in the chain~\eqref{RRRRR-eq} will be leaps by Lemma~\ref{leap-lem}.
\end{proof}

\section{The algorithm}\label{algorithm-sec}
Here we prove Theorem~\ref{main-th} by presenting the scheme of an algorithm that
decides whether or not given Legendrian links are Legendrian isotopic.
The links are supposed to be represented by rectangular diagrams~$R_1$ and~$R_2$,
which are the input of the algorithm. The number of connected components of them
is denoted by~$m$.

\subsection*{Step 1.}
Check whether~$\widehat R_1$ and~$\widehat R_2$ have the same topological type.
To do so one can use the algorithm for comparing Haken's manifolds, which is described
in detail in S.\,Matveev's book~\cite{mat}. Another option is to use the upper estimate
given in~\cite{cola}
on the number of Reidemeister moves sufficient for transforming one planar link
diagram to another provided they define ambient isotopic links.

If~$\widehat R_1$ and~$\widehat R_2$ are not ambient isotopic, then return `False'.
Otherwise, proceed to the next step.

\subsection*{Step 2.}
Find a rectangular diagram of a link~$R_3$ such that~$\mathscr L_+(R_3)=\mathscr L_+(R_1)$
and~$\mathscr L_-(R_3)=\mathscr L_-(R_2)$.

Such a diagram exists by Theorem~\ref{r3-th}. To find one, we run two processes in parallel:
one enumerates combinatorial types of all rectangular diagrams that can be obtained from~$R_1$ by
a chain of elementary moves in which all stabilizations and destabilizations are of type~I,
and the other enumerates combinatorial types of all rectangular diagrams that can be obtained from~$R_2$ by
a chain of elementary moves in which all stabilizations and destabilizations are of type~II.
Once an intersection of the two lists generated in this way is encountered both processes are stopped.

\subsection*{Step 3.}
Find a family of chains of elementary moves~$s_1,\ldots,s_r$ such that
the morphisms~$\widehat s_1,\ldots\widehat s_r$ generate~$\mathrm{Sym}(R_3)$.
This is possible due to Theorem~\ref{find-gen-th}.

\subsection*{Step 4.}
Pick a chain of stabilizations starting from~$R_3$ such that, for each~$(i,t)\in\{1,\ldots,m\}\times\{\overrightarrow{\mathrm I},
\overleftarrow{\mathrm I}\}$, it includes exactly~$\max_{j=1}^rn_{i,t}(s_j)$ stabilizations of oriented type~$t$
on the component number~$i$, and no type~II stabilizations. Let~$R_3'$ be the resulting diagram.
Pick a similar chain of stabilizations starting from~$R_2$, and denote the result by~$R_2'$.
By construction, we have~$\mathscr L_+(R_1)=\mathscr L_+(R_3')$ and~$\mathscr L_+(R_2')=\mathscr L_+(R_2)$.

\begin{lemm}
We have~$\mathscr L_-(R_3')=\mathscr L_-(R_2')$.
\end{lemm}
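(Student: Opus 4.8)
The plan is to track what happens to the $\mathscr L_-$ type under the two chains of stabilizations constructed in Step 4, and to show that any discrepancy between $\mathscr L_-(R_3')$ and $\mathscr L_-(R_2')$ is absorbed by exchange moves. We already know from Step 2 that $\mathscr L_-(R_3)=\mathscr L_-(R_2)$, so the two diagrams $R_3$ and $R_2$ are related by a chain of elementary moves in which every stabilization and destabilization is of type~I (by Theorem~\ref{leg-rect-th}(ii)). Call this chain $s$ and let $\eta=\widehat s\in\mathrm{Mor}_+(R_3,R_2)$. The diagrams $R_3'$ and $R_2'$ are obtained from $R_3$ and $R_2$, respectively, by chains of type~I stabilizations (no type~II ones are used, by the choice in Step 4), so there are morphisms $\alpha\in\mathrm{Mor}_+(R_3,R_3')$ and $\beta\in\mathrm{Mor}_+(R_2,R_2')$ induced by those chains.

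First I would compose these to produce a morphism $\eta'=\beta\circ\eta\circ\alpha^{-1}\in\mathrm{Mor}_+(R_3',R_2')$, which certifies $\mathscr L_+(R_3')=\mathscr L_+(R_2')$; this is consistent with the equalities already recorded at the end of Step 4. The real content is the $\mathscr L_-$ direction, so the key observation is that the stabilizations added in Step 4 are of \emph{oriented} type $\overrightarrow{\mathrm I}$ or $\overleftarrow{\mathrm I}$ only, and in equal prescribed numbers $\max_j n_{i,t}(s_j)$ on each component for each such oriented type $t$, for \emph{both} $R_3\rightsquigarrow R_3'$ and $R_2\rightsquigarrow R_2'$. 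Since a type~I stabilization changes $\mathscr L_-$ by a Legendrian stabilization (recall from the introduction that type~I moves preserve $\mathscr L_+$ and alter $\mathscr L_-$), performing the \emph{same} collection of oriented-type~I stabilizations, component by component, on two diagrams already satisfying $\mathscr L_-(R_3)=\mathscr L_-(R_2)$ subjects their $\mathscr L_-$ types to identical Legendrian stabilizations. The oriented type is exactly the data that records which of the two possible Legendrian stabilizations (positive or negative) is applied, so matching oriented types and matching counts guarantees that the resulting $\mathscr L_-$ Legendrian types coincide up to isotopy.

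To make this rigorous I would invoke Theorem~\ref{leg-rect-th}(ii) in the $\mathscr L_-$ form: it suffices to exhibit a chain of elementary moves from $R_3'$ to $R_2'$ in which no type~I stabilization or destabilization appears. I would build such a chain by interleaving. Writing the Step~4 stabilizations on each side as sequences of local oriented-type~I stabilizations, Lemma~\ref{stab-related-by-exch-lem} tells me that two stabilizations of the same oriented type on the same component differ only by exchange moves. So I can transport the chain $s$ (which is type~I, hence built from exchanges and type~I (de)stabilizations) past the added stabilizations: using Lemma~\ref{stab-first-lem} repeatedly I can reorder so that the extra stabilizations on the $R_3$ side are matched one-for-one with those on the $R_2$ side, the matching being exact precisely because the prescribed counts $\max_j n_{i,t}(s_j)$ agree on the two sides for every $(i,t)$ with $t\in\{\overrightarrow{\mathrm I},\overleftarrow{\mathrm I}\}$. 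After cancellation, the residual chain from $R_3'$ to $R_2'$ consists of exchanges and type~I (de)stabilizations, but in fact all the type~I content has been paired off, leaving a transformation inducing a morphism in $\mathrm{Mor}_+(R_3',R_2')$; combined with the observation that no type~II move ever entered, Theorem~\ref{leg-rect-th}(ii) then yields $\mathscr L_-(R_3')=\mathscr L_-(R_2')$, provided I can also see this morphism lies in $\mathrm{Mor}_-$.

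The hard part will be the last step: showing the transformation $R_3'\rightsquigarrow R_2'$ can be realized \emph{without} type~I (de)stabilizations, i.e.\ that it lies in $\mathrm{Mor}_-(R_3',R_2')$, rather than merely in $\mathrm{Mor}_+$. The clean way to close this gap is to apply Theorem~\ref{I+II=>exch-th}. I would argue that $\eta'=\beta\circ\eta\circ\alpha^{-1}$ simultaneously lies in $\mathrm{Mor}_+(R_3',R_2')$ (visible from its construction via type~I moves) and in $\mathrm{Mor}_-(R_3',R_2')$ (from the cancellation argument realizing it by exchanges after pairing the equal-count oriented-type~I stabilizations). Then Theorem~\ref{I+II=>exch-th} forces $\eta'\in\mathrm{Mor}_0(R_3',R_2')$, so $R_3'$ and $R_2'$ are related by exchange moves alone, whence $[R_3']=[R_2']$ and in particular $\mathscr L_-(R_3')=\mathscr L_-(R_2')$. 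The delicate bookkeeping — ensuring the oriented types and per-component counts really do match so that every added type~I stabilization cancels — is where I expect to spend the most care; everything else is a formal consequence of the commutation lemmas and Theorem~\ref{I+II=>exch-th}.
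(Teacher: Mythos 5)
Your core intuition is the right one --- applying identical collections of oriented type~I stabilizations to two diagrams with the same $\mathscr L_-$ type subjects that type to identical Legendrian stabilizations, and the way to make this precise is to push the added stabilizations through a connecting chain via Lemma~\ref{stab-first-lem} and match them up using Lemma~\ref{stab-related-by-exch-lem}; that is essentially the paper's (very terse) proof. But your execution contains a type-swap that is fatal. The equality $\mathscr L_-(R_3)=\mathscr L_-(R_2)$ is certified, by Theorem~\ref{leg-rect-th}(ii), by a chain containing no type~\emph{I} (de)stabilizations, i.e.\ only exchanges and type~II moves, inducing a morphism in $\mathrm{Mor}_-(R_3,R_2)$. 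You instead take $s$ to be a chain of exchanges and type~I moves with $\widehat s\in\mathrm{Mor}_+(R_3,R_2)$; such a chain would certify $\mathscr L_+(R_3)=\mathscr L_+(R_2)$, which is not known --- it is precisely the statement the whole algorithm is trying to decide, and it is false whenever the correct output is `False'. The subsequent claim that $\eta'=\beta\circ\eta\circ\alpha^{-1}$ certifies $\mathscr L_+(R_3')=\mathscr L_+(R_2')$ is likewise unjustified: Step~4 records only $\mathscr L_+(R_1)=\mathscr L_+(R_3')$ and $\mathscr L_+(R_2')=\mathscr L_+(R_2)$, not an equality between the two primed diagrams.

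The error compounds at the end: if you could really place $\eta'$ in $\mathrm{Mor}_+(R_3',R_2')\cap\mathrm{Mor}_-(R_3',R_2')$, Theorem~\ref{I+II=>exch-th} would give $[R_3']=[R_2']$ and hence $\mathscr L_+(R_3')=\mathscr L_+(R_2')$ unconditionally, so the algorithm would always answer `True' --- absurd. The lemma claims only the $\mathscr L_-$ equality, and no appeal to Theorem~\ref{I+II=>exch-th} is needed or possible here. The repaired argument runs as follows: let $s$ be the exchange-and-type-II chain from $R_3$ to $R_2$; induct on the number of stabilizations in the chain $R_2\rightsquigarrow R_2'$, using Lemma~\ref{stab-first-lem} to move each one to the front of $s$ as a stabilization of the same oriented type on the same component performed on $R_3$, and Lemma~\ref{stab-related-by-exch-lem} to identify its result, up to exchanges, with the corresponding stabilization in the chain $R_3\rightsquigarrow R_3'$ (the equal per-component, per-oriented-type counts on the two sides are what makes this pairing exhaust both chains). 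The residual chain from $R_3'$ to $R_2'$ then contains no type~I (de)stabilizations, and Theorem~\ref{leg-rect-th}(ii) yields $\mathscr L_-(R_3')=\mathscr L_-(R_2')$.
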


\begin{proof}
This follows by induction on the number of stabilizations in the chain producing~$R_2'$ from~$R_2$,
and Lemma~\ref{stab-first-lem}.
\end{proof}

\subsection*{Step 5.}
Search all combinatorial types of rectangular diagrams that can be
obtained from~$R_3'$ by exchange moves. There are only finitely many of them,
since exchange moves preserve the number of vertices of the diagram. If
the combinatorial type of~$R_2'$ appears in the produced list,
then the output of the algorithm is `True'. Otherwise, `False'.
The output is correct due to the following statement.

\begin{lemm}\label{final-lem}
In the situation described above, the following two conditions are equivalent\emph:
\begin{enumerate}
\item
$\mathscr L_+(R_3')=\mathscr L_+(R_2')$\emph;
\item
$R_3'$ and~$R_2'$ can be connected by a chain of exchange moves.
\end{enumerate}
\end{lemm}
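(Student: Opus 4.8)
The reverse implication (2)~$\Rightarrow$~(1) is immediate and I would dispose of it first: a chain of exchange moves contains no type~II stabilizations or destabilizations, so by Theorem~\ref{leg-rect-th} it preserves~$\mathscr L_+$, whence~$\mathscr L_+(R_3')=\mathscr L_+(R_2')$. The real content is the forward implication, and the plan is to reduce it to Theorem~\ref{I+II=>exch-th} by exhibiting a \emph{single} morphism~$R_3'\to R_2'$ lying simultaneously in~$\mathrm{Mor}_+$ and~$\mathrm{Mor}_-$; once such a morphism is found, Theorem~\ref{I+II=>exch-th} places it in~$\mathrm{Mor}_0(R_3',R_2')$, i.e.\ it is induced by a chain of exchange moves, which is exactly~(2).

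The first real step is to record the structural consequence of the construction in Step~4. The diagram~$R_3'$ is obtained from~$R_3$ by a chain of stabilizations whose oriented-type counts on each component are precisely~$\max_j n_{i,t}(s_j)$ for the generators~$\widehat s_1,\ldots,\widehat s_r$ of~$\mathrm{Sym}(R_3)$ from Step~3, and only type~I oriented types are used. This is exactly the hypothesis of Proposition~\ref{sym-no-stab-prop} with~$X=\{1,\ldots,m\}\times\{\overrightarrow{\mathrm I},\overleftarrow{\mathrm I}\}$. The proposition then guarantees that every element of~$\mathrm{Sym}(R_3')$ is induced by a chain containing no type~I stabilizations or destabilizations, i.e.
$$\mathrm{Sym}(R_3')=\mathrm{Sym}_-(R_3').$$
I expect this identity to be the crux of the whole argument: it is the only place where the symmetry-group computation of Steps~3 and~4 is genuinely used, and it is what upgrades the bare nonemptiness of~$\mathrm{Mor}_+$ and~$\mathrm{Mor}_-$ into a common element.

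With this in hand I would produce the two morphisms and merge them. By the lemma of Step~4 we have~$\mathscr L_-(R_3')=\mathscr L_-(R_2')$, so Theorem~\ref{leg-rect-th} yields some~$\eta_-\in\mathrm{Mor}_-(R_3',R_2')$; assuming~(1), namely~$\mathscr L_+(R_3')=\mathscr L_+(R_2')$, the same theorem yields some~$\eta_+\in\mathrm{Mor}_+(R_3',R_2')$. The composite~$\eta_-^{-1}\circ\eta_+$ lies in~$\mathrm{Sym}(R_3')$, hence in~$\mathrm{Sym}_-(R_3')$ by the displayed identity. Since~$\mathrm{Mor}_-$ is closed under composition, the factorization~$\eta_+=\eta_-\circ(\eta_-^{-1}\circ\eta_+)$ exhibits~$\eta_+$ as a composite of two type~II morphisms, so~$\eta_+\in\mathrm{Mor}_-(R_3',R_2')$ as well. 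Thus~$\eta_+\in\mathrm{Mor}_+(R_3',R_2')\cap\mathrm{Mor}_-(R_3',R_2')$, and Theorem~\ref{I+II=>exch-th} finishes the proof.

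The only routine verifications left are that~$\mathrm{Mor}_-$ composes and that~$\mathrm{Sym}_-(R_3')$ is a subgroup, both of which follow at once from concatenating and reversing chains of exchange and type~II moves. The conceptual difficulty is entirely concentrated in the second paragraph: recognizing that Theorem~\ref{I+II=>exch-th} speaks about one morphism belonging to both~$\mathrm{Mor}_+$ and~$\mathrm{Mor}_-$, so that merely knowing~$\mathrm{Mor}_+\neq\varnothing$ and~$\mathrm{Mor}_-\neq\varnothing$ is insufficient, and that the stabilizations of Step~4 are designed exactly to absorb the discrepancy~$\eta_-^{-1}\circ\eta_+$ into~$\mathrm{Sym}_-$.
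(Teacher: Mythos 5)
Your proof is correct and follows essentially the same route as the paper: both arguments invoke Proposition~\ref{sym-no-stab-prop} to see that every element of~$\mathrm{Sym}(R_3')$ lies in~$\mathrm{Sym}_-(R_3')$, use this to convert a type-I-free chain certifying~$\mathscr L_+(R_3')=\mathscr L_+(R_2')$ into a type-II-free chain inducing the \emph{same} morphism, and then conclude via Theorem~\ref{I+II=>exch-th}. Your write-up merely makes explicit the factorization~$\eta_+=\eta_-\circ(\eta_-^{-1}\circ\eta_+)$ that the paper leaves implicit in its ``therefore''.
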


\begin{proof}
The implication~$(2)\Rightarrow(1)$ is clear.

Suppose that~$\mathscr L_+(R_3')=\mathscr L_+(R_2')$ holds. Pick a chain of elementary moves~$s$
that produces~$R_2'$ from~$R_3'$ and certifies this equality (that is, uses no type~II (de)stabilizations).

By Proposition~\ref{sym-no-stab-prop}, any element of the symmetry group~$\mathrm{Sym}(R_3')$
can be induced by a chain of elementary moves not including type~I (de)stabilizations.
Therefore, since~$\mathscr L_-(R_3')=\mathscr L_-(R_2')$, there exists a chain of elementary moves~$s'$ such that
\begin{enumerate}
\item
$s'$ produces~$R_2'$ from~$R_3'$;
\item
$s'$ does not include type~I (de)stabilizations;
\item
$s'$ induces the same morphism as~$s$ does.
\end{enumerate}
It now follows from Theorem~\ref{I+II=>exch-th} that~$R_3'$ and~$R_2'$ are related by a chain of exchange
moves.
\end{proof}

\section{Finding generators of the symmetry group}\label{generators-sec}

The aim of this section is to prove Theorem~\ref{find-gen-th}.

\begin{defi}
Let~$M$ be an oriented compact
three-dimensional PL-manifold, and let~$P_0,P_1,P_2$ be three subpolyhedra of~$M$.
We define~$\mathrm{Mor}_{M,P_0}(P_1,P_2)$ to be the set of connected components of
the space of homeomorphisms~$\phi:(M,P_0,P_1)\rightarrow(M,P_0,P_2)$
such that the restriction of~$\phi$ to~$\partial M$ is isotopic to the identity.
The elements of~$\mathrm{Mor}_{M,P_0}(P_1,P_2)$
are called \emph{morphisms from~$P_1$ to~$P_2$ relative to~$P_0$}.
If~$P_0$ is empty, then it is omitted in the notation: $\mathrm{Mor}_M(P_1,P_2)=
\mathrm{Mor}_{M,\varnothing}(P_1,P_2)$.

Similarly, for four subpolyhedra~$P_0',P_0'',P_1,P_2\subset M$
we define~$\mathrm{Mor}_{M,P_0',P_0''}(P_1,P_2)$ to be the set
of connected components of the space of homeomorphisms~$\phi:(M,P_0',P_0'',P_1)\rightarrow
(M,P_0',P_0'',P_2)$ whose restriction to~$\partial M$ is isotopic to the identity.

For two morphisms~$\eta_1\in\mathrm{Mor}_{M,P_0}(P_1,P_2)$
and~$\eta_2\in\mathrm{Mor}_{M,P_0}(P_2,P_3)$
(or $\eta_1\in\mathrm{Mor}_{M,P_0',P_0''}(P_1,P_2)$
and~$\eta_2\in\mathrm{Mor}_{M,P_0',P_0''}(P_2,P_3)$), the composition~$\eta_2\circ\eta_1$
is defined naturally. The group~$\mathrm{Mor}_M(\varnothing,\varnothing)$
is referred to as \emph{the symmetry group of~$M$} and denoted by~$\mathrm{Sym}(M)$.
The group~$\mathrm{Mor}_M(P,P)=\mathrm{Mor}_{M,P}(\varnothing,\varnothing)$
(respectively, $\mathrm{Mor}_{M,P_0}(P,P)=\mathrm{Mor}_{M,P_0,P}(\varnothing,\varnothing)$)
is denoted by~$\mathrm{Sym}_M(P)$ (respectively, by~$\mathrm{Sym}_{M,P_0}(P)$).
\end{defi}

For a morphism~$\eta\in\mathrm{Mor}_{M,P_0}(P_1,P_2)$ (or~$\eta\in\mathrm{Mor}_{M,P_0',P_0''}(P_1,P_2)$)
represented by a self-homeomor\-phism~$\phi$ of~$M$,
we denote by~$\overline\eta$ the element
of~$\mathrm{Sym}_M(P_0)$ (respectively, of~$\mathrm{Sym}_{M,P_0'}(P_0'')$)
represented by~$\phi$.

\begin{defi}
Let~$M$ be an oriented compact three-dimensional PL-manifold, and let~$\mathscr P=(P_1,\ldots,P_N)$
be a collection of subpolyhedra of~$M$.
The collection~$\mathscr P$ is said to be
\emph{fertile} if, for any~$i\in\{1,\ldots,N\}$, the subset
\begin{equation}\label{morpp-eq}
\{\overline\eta:\eta\in\mathrm{Mor}_M(P_i,P_j),\ 1\leqslant j\leqslant N\}
\end{equation}
generates the group~$\mathrm{Sym}(M)$.
\end{defi}

Recall that a compact two-dimensional polyhedron is called \emph{special}
if it is homeomorphic to a CW-complex obtained from a four-valent graph
by attaching two-cells so that each attachment map is an immersion, and
the link of any vertex of the graph in the obtained complex is homeomorphic
to the one-skeleton of the three-simplex.

\begin{theo}\label{fertile-alg-exist-th}
There exists an algorithm that, given a triangulated Haken manifold~$M$, constructs
a fertile family~$\{P_1,\ldots,P_N\}$ of special two-dimensional subpolyhedra of~$M$
such that,
for any~$i=1,\ldots,N$, the complement~$M\smallsetminus P_i$ is a union of open three-balls.
\end{theo}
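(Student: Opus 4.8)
To prove Theorem~\ref{fertile-alg-exist-th}, the plan is to encode $\mathrm{Sym}(M)$ in the combinatorics of special spines and to lean on Matveev's algorithmic theory of Haken manifolds~\cite{mat}. First I would read off from the given triangulation (after a subdivision, if necessary) one special spine $P_1\subset M$ with ball complement: the spine dual to the triangulation is special and its complement is a union of open balls, one for each vertex of the triangulation. Write $c$ for its complexity, i.e.\ the number of true (non-manifold) vertices. Everything rests on two facts. First, by the Matveev--Piergallini theorem any two special spines of $M$ with ball complement are joined by a finite sequence of $2\leftrightarrow3$ and lune moves, each realizable by an ambient isotopy of $M$; hence the graph $\mathcal G$ whose vertices are isotopy classes of such spines and whose edges are these moves is connected. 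Second, since $M$ is Haken, the homeomorphism problem is solvable~\cite{mat}, and from its solution one extracts a finite generating set $g_1,\dots,g_k$ of the (finitely generated) group $\mathrm{Sym}(M)=\mathrm{Mor}_M(\varnothing,\varnothing)$, together with an algorithm that decides, for two spines $P,P'$, the set $\mathrm{Mor}_M(P,P')$ of isotopy classes of homeomorphisms carrying $P$ to $P'$.

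The fertility mechanism comes from the action of $\mathrm{Sym}(M)$ on $\mathcal G$ by $g\cdot[P]=[g(P)]$. Any two spines of $M$ with the same underlying abstract special polyhedron differ by a homeomorphism of $M$: their thickenings agree canonically (uniqueness of the regular neighbourhood of a special polyhedron), and the complementary balls are matched by extending the boundary identification over $\mathbb B^3$. Thus the vertex orbits correspond to abstract special spines of $M$, of which there are only finitely many of each bounded complexity, and likewise for edge orbits; so for every $C$ the quotient $\mathcal G_{\leqslant C}/\mathrm{Sym}(M)$ of the complexity-$\leqslant C$ part is a \emph{finite} graph, even though $\mathcal G_{\leqslant C}$ itself need not be. Each generator $g_i$ carries $P_1$ to the spine $g_i(P_1)$ of complexity $c$, and by connectivity of $\mathcal G$ there is a move-path from $P_1$ to $g_i(P_1)$; I would let $C$ be the largest complexity occurring along these finitely many paths. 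Lifting a spanning tree of the component of $[P_1]$ in $\mathcal G_{\leqslant C}/\mathrm{Sym}(M)$ to a finite tree of spines $P_1,\dots,P_N\subset M$, the Bass--Serre description of a group acting on a connected graph shows that the edge transformations together with the vertex stabilisers generate $\mathrm{Sym}(M)$; every one of these elements is $\overline\eta$ for some $\eta\in\mathrm{Mor}_M(P_i,P_j)$, since each edge is realized by a homeomorphism of $M$ taking one spine to another and each stabiliser lies in the image of $\mathrm{Mor}_M(P_i,P_i)$.

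All of this is effective: the abstract special polyhedra of complexity $\leqslant C$ are finite in number and enumerable; for each I would test, by thickening, capping with balls, and applying the homeomorphism algorithm for Haken manifolds~\cite{mat}, whether it is a spine of $M$, and then compute the morphism sets $\mathrm{Mor}_M(P_i,P_j)$ and the isotopy relations, so the finite quotient graph and its transformations are computable. It remains to obtain fertility with \emph{every} $P_i$ as base, not just $P_1$. Fixing for each $a$ a tree-morphism $t_a\in\mathrm{Mor}_M(P_i,P_a)$ (the composite along the tree path from $P_i$ to $P_a$), any edge generator $\overline e$ with $e\in\mathrm{Mor}_M(P_a,P_b)$ rewrites as $\overline{t_b}\cdot\overline{t_b^{-1}\circ e\circ t_a}\cdot\overline{t_a}^{-1}$, a product of elements of $\bigcup_j\{\overline\eta:\eta\in\mathrm{Mor}_M(P_i,P_j)\}$, because $t_b^{-1}\circ e\circ t_a\in\mathrm{Mor}_M(P_i,P_i)$. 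Hence this set generates $\mathrm{Sym}(M)$ for each $i$, so the family is fertile.

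The hard part is the effective choice of the bound $C$, i.e.\ realizing every symmetry by an explicitly bounded sequence of spine moves; this is where I rely most heavily on Matveev's machinery, both for the connectivity of the space of spines (Matveev--Piergallini) and for extracting generators of $\mathrm{Sym}(M)$ and deciding the morphism sets from the solution of the Haken--Hemion homeomorphism problem. A secondary technical point is the boundary case: if $M$ has boundary one first works with spines relative to $\partial M$ (or caps off appropriately) so that the `union of open three-balls' condition is meaningful, which leaves the argument above unchanged.
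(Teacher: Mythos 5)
Your proposal takes a genuinely different route from the paper (spines modulo Matveev--Piergallini moves plus a Bass--Serre argument, versus the paper's modification of Matveev's characteristic-family construction via extensions $E_1,\dots,E_{10}$), but it contains a gap that is essentially a circularity. You assert that ``from the solution of the homeomorphism problem one extracts a finite generating set $g_1,\dots,g_k$ of $\mathrm{Sym}(M)$.'' This is not something Matveev's recognition algorithm provides: it decides homeomorphy, it does not compute mapping class group generators. Moreover, if one \emph{could} algorithmically produce homeomorphisms generating $\mathrm{Sym}(M)$, the theorem would follow immediately --- the family $\{P_1,g_1(P_1),\dots,g_k(P_1)\}$ is already fertile, since $\{\overline\eta:\eta\in\mathrm{Mor}_M(P_1,g_j(P_1))\}$ contains $g_j$ and $\{\overline\eta:\eta\in\mathrm{Mor}_M(P_1,P_1)\}$ is by definition the whole image of $\mathrm{Sym}_M(P_1)$. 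So your step (2) assumes what is, in substance, the conclusion. The actual content of the theorem is an algorithmic route to such generators, and the paper obtains it only by reworking the triangulation-dependent extension rules so that each $\mathscr E(P)$ satisfies a generation property at every stage, using the Third Finiteness Property, Lickorish--Chillingworth generators of surface mapping class groups lifted to Seifert pieces, and explicit Dehn twists along tori and vertical annuli. None of this is bypassed by invoking the homeomorphism problem.

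A second, independent problem is your finiteness claim for the quotient graph. You identify the $\mathrm{Sym}(M)$-orbits of embedded spines of bounded complexity with abstract special polyhedra. Two embedded spines of the same abstract type do differ by a self-homeomorphism of $M$, but that homeomorphism need not restrict to the identity on $\partial M$ up to isotopy, i.e.\ need not lie in $\mathrm{Sym}(M)$ as defined in Section~\ref{generators-sec}. A single abstract type can therefore split into infinitely many $\mathrm{Sym}(M)$-orbits (for instance under powers of a Dehn twist along an essential annulus meeting $\partial M$), so $\mathcal G_{\leqslant C}/\mathrm{Sym}(M)$ need not be finite and your choice of the bound $C$ and of a spanning tree is not effective. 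Relatedly, deciding the morphism sets $\mathrm{Mor}_M(P_i,P_j)$ and the isotopy relation between embedded spines is itself an algorithmic problem of the same difficulty as the theorem; the paper's hierarchy of admissible polyhedra with property~\eqref{newE-eq} is precisely the device that makes these questions tractable level by level. The Bass--Serre skeleton of your argument is sound, but without an independent, effective source of generators and without finiteness of the quotient, it does not close.
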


\begin{proof}
To prove this theorem we revisit the proof of~\cite[Theorem 6.3.3]{mat},
which states that, for any Haken manifold with boundary pattern~$(M,\mathit\Gamma)$,
one can find a characteristic family of special two-dimensional subpolyhedra of~$M$
such that, for each subpolyhedron~$P$ in the family, the complement~$M\smallsetminus P$
is a union of open three-balls. We show below how to modify
this proof to establish our statement.

Recall from~\cite{mat} that by \emph{a boundary pattern} on a compact oriented three-manifold~$M$
one calls a three-valent graph embedded in~$\partial M$ (with no restriction on loops or multiple edges).
One can see that a boundary pattern~$\mathit\Gamma$ can be chosen so that
any self-homeomorphism~$\phi$ of~$\partial M$ that takes~$\mathit\Gamma$ to itself
is isotopic to identity, and~$(M,\mathit\Gamma)$ is Haken provided that~$M$ considered without
boundary pattern is Haken.
We assume that such a~$\mathit\Gamma$ is fixed from now on.
Forgetting~$\mathit\Gamma$ gives rise to an epimorphism~$\mathrm{Sym}_M(\mathit\Gamma)\rightarrow\mathrm{Sym(M)}$.

Recall also that `\emph{characteristic}' in~\cite{mat} means that the produced
set of subpolyhedra viewed up to a homeomorphism preserving~$\mathit\Gamma$
(there may be duplicates
when the equivalence is taken into account, which are discarded) is a complete topological invariant of~$M$.
In particular,  it depends only on the homeomorphism type
of~$(M,\mathit\Gamma)$, but not on the given triangulation.

Now we overview the general principles used by Matveev to construct a characteristic family, using
slightly different terminology and settings.

If~$P'$ is a two-dimensional
subpolyhedron of~$M$, and~$P$ is a proper subpolyhedron of~$P'\subset M$ we say that~$P'$ is obtained from~$P$
by \emph{an extension}.
Two extensions~$P\mapsto P'$ and~$P\mapsto P''$ are said to be \emph{of the same topological type}
if the set~$\mathrm{Mor}_{M,\mathit\Gamma,P}(P',P'')$ is not empty.
If, additionally, this set contains a morphism~$\eta$ such that~$\overline\eta=1\in\mathrm{Sym}_{M,\mathit\Gamma}(P)$,
then the extensions~$P\mapsto P'$ and~$P\mapsto P''$ are said to be \emph{equivalent}.

Matveev introduces an elaborate set of rules for defining extensions that are used to produce a characteristic
family of polyhedra, and the rules come in two kinds, those formulated in purely
topological terms, and triangulation-dependent ones. We call \emph{allowable} all
extensions that satisfy the topological rules introduced in~\cite{mat}.
(Note that which extensions are allowable depends on the chosen boundary
pattern~$\mathit\Gamma$, and this is the way how~$\mathit\Gamma$ affects the construction below.)
Any subpolyhedron of~$M$
produced by a finite number of consecutive allowable extensions
from the boundary~$\partial M$ is called \emph{admissible}.

The algorithm for producing a characteristic family of polyhedra for~$M$ in~\cite{mat} works as follows.
We start with the family~$\mathscr P_0(M,\mathit\Gamma)=\{\partial M\}$ and put~$k=0$. Then
we implement the following steps in a loop:
\begin{itemize}
\item
check whether~$\mathscr P_k(M,\mathit\Gamma)$ contains a polyhedron~$P$ such that~$M\smallsetminus P$ is a union of
open three-balls;
\item
if true, then the process is terminated and the family
$$\mathscr P=\{P\in\mathscr P_k(M,\mathit\Gamma):M\smallsetminus P\text{ is a union of open three-balls}\}$$
is given as the output;
\item
otherwise, compute~$\mathscr P_{k+1}(M,\mathit\Gamma)$ defined as the family of all polyhedra that can be obtained
from a polyhedron in~$\mathscr P_k(M,\mathit\Gamma)$ by allowable extensions satisfying the
triangulation-dependent restrictions.
\end{itemize}

The topological and triangulation-dependent extension rules in~\cite{mat} are designed so as to
satisfy the following properties:
\begin{itemize}
\item
given an admissible polyhedron~$P\subset M$ such that~$M\smallsetminus P$ is not a union
of open three-balls, there is a non-empty enumerable
set~$\mathscr X(P)$ of extensions of~$P$ such that each
equivalence class of allowable extensions~$P\mapsto P'$
has a representative in~$\mathscr X(P)$;
\item
the subset~$\mathscr E(P)$ of~$\mathscr X(P)$ consisting of all extensions that satisfy triangulation-dependent
restrictions contains at least one representative in each topological type of
allowable extensions of~$P$;
\item
the subset~$\mathscr E(P)$ is finite and computable;
\item
there is a number~$K$ depending on the topology of~$(M,\mathit\Gamma)$
such that at most~$K$ consecutive allowable extensions can be applied to~$\partial M$.
\end{itemize}
It follows from these properties that
\begin{itemize}
\item
for each applicable~$k$, the set of homeomorphism types of triples~$(M,\mathit\Gamma,P)$
with~$P\in\mathscr P_k(M,\mathit\Gamma)$, depends only on the homeomorphism type of~$(M,\mathit\Gamma)$;
\item
the process terminates in finitely many steps.
\end{itemize}
This, in turn, implies that the output is a  characteristic family of polyhedra.

What we need to modify in this machinery is the triangulation-dependent rules and,
thus, the definition of~$\mathscr E(P)$, where~$P$ is an admissible subpolyhedron of~$M$.
We do it so as to have, additionally, the following:
\begin{equation}\label{newE-eq}
\forall P'\in\mathscr E(P)\text{ the set }
\{\overline\eta:\eta\in\mathrm{Mor}_{M,\mathit\Gamma,P}(P',P''),\,
P''\in\mathscr E(P)\}\text{ generates }\mathrm{Sym}_{M,\mathit\Gamma}(P).
\end{equation}
Before we explain how to achieve this, let us see why this suffices to
prove the theorem. We claim that, with the new definition of~$\mathscr E(P)$,
each family~$\mathscr P_k$ produced
using the above scheme satisfies the following condition:
\begin{equation}\label{superfertile-eq}
\forall P\in\mathscr P_k\text{ the set }\{\overline\eta:
\eta\in\mathrm{Mor}_{M,\mathit\Gamma}(P,P'),\,P'\in\mathscr P_k\}
\text{ generates }\mathrm{Sym}_M(\mathit\Gamma),\end{equation}
which, in turn, implies that all sets~$\mathscr P_k(M,\mathit\Gamma)$ as well
as the output of the algorithm are fertile families for~$M$.
The claim is proved by induction on~$k$. For~$k=0$ it holds trivially,
since~$\mathscr P_0=\{\partial M\}$ and~$\{\overline\eta:
\eta\in\mathrm{Mor}_{M,\mathit\Gamma}(\partial M,\partial M)\}$
coincides with~$\mathrm{Sym}_M(\mathit\Gamma)$.

Suppose that~\eqref{superfertile-eq} holds for~$k=k_0$ and there is no polyhedron~$P$
in~$\mathscr P_k(M,\mathit\Gamma)$ such that~$M\smallsetminus P$ is a union of open balls.
Let~$P_1\in\mathscr P_{k_0+1}$, and let~$P_1$ be obtained from~$P_0\in\mathscr P_{k_0}$
by an extension.

To make the induction step, it suffices to show that, for any~$P_0'\in\mathscr P_{k_0}$
and any morphism~$\eta\in\mathrm{Mor}_{M,\mathit\Gamma}(P_0,P_0')$
the element~$\overline\eta\in\mathrm{Sym}_M(\mathit\Gamma)$ can be decomposed
into a product of elements from
\begin{equation}\label{inductionstep-eq}
\{\overline\zeta:\zeta\in\mathrm{Mor}_{M,\mathit\Gamma}(P_1,P_1'),\,
P_1'\in\mathscr P_{k_0+1}\}.
\end{equation}
To this end, pick a homeomorphism~$\phi:(M,\mathit\Gamma,P_0)\rightarrow(M,\mathit\Gamma,P_0')$
representing~$\eta$.

Since the set~$\mathscr E(P_0')$ contains representatives of every topological
type of extensions, there exist~$P_1'\in\mathscr E(P_0')$
and a homeomorphism~$\psi:(M,\mathit\Gamma,P_0,P_1)\rightarrow(M,\mathit\Gamma,P_0',P_1')$.
Let~$\zeta$ be the element of~$\mathrm{Mor}_{M,\mathit\Gamma}(P_1,P_1')$
represented by~$\psi$, and~$\chi$ the element of~$\mathrm{Sym}_{M,\mathit\Gamma}(P_0)$
represented by~$\psi^{-1}\circ\phi$. Then we have
$$\overline\eta=\overline\zeta\circ\overline\chi.$$
The element~$\overline\zeta\in\mathrm{Sym}_{M,\mathit\Gamma}$
belongs to the set~\eqref{inductionstep-eq} by construction.
The element~$\overline\chi$ belongs to the subgroup generated by the set~\eqref{inductionstep-eq}
by virtue of~\eqref{newE-eq}.

Thus, it remains only to show that the extension rules can be modified
so that~\eqref{newE-eq} holds, without violating other properties of
the sets~$\mathscr E(P)$ mentioned above.

In what follows we denote by~$\mathscr E_0(P)$, where~$P$ is an admissible subpolyhedron of~$M$,
the set of all extensions of~$P$ satisfying Matveev's extension rules. The set~$\mathscr E(P)\subset\mathscr X(P)$
we substitute it with will be larger, $\mathscr E(P)\supset\mathscr E_0(P)$. So, we should
be concerned, in addition to~\eqref{newE-eq}, only with ensuring
that~$\mathscr E(P)$ is finite and computable.

For an allowable extension~$P\mapsto P'$, we denote by~$[P']_P$
the equivalence class of this extension. The group~$\mathrm{Sym}_{M,\mathit\Gamma}(P)$ acts
on the set~$\{[P']_P:P'\in\mathscr X(P)\}$ in an obvious way. The orbit of~$[P']_P$
under this action will be denoted by~$\mathscr O_P(P')$.

As we will see, for every~$P'\in\mathscr E_0(P)$,
there is a subgroup~$G(P,P')\subset\mathrm{Sym}_{M,\mathit\Gamma}(P)$
with a computable set of generators, and a finite collection of
allowable extensions~$P\mapsto P_i'\in\mathscr E_0(P)$, $i=1,\ldots,s$,
having the same topological type as~$P\mapsto P'$ has such that
\begin{equation}\label{gpp-eq}
\bigcup_{i=1}^s\bigl(G(P,P'_i)\cdot[P'_i]_P\bigr)=\mathscr O_P(P').
\end{equation}
For each~$P'\in\mathscr E_0(P)$ we compute some set of generators~$\{g_1,\ldots,g_p\}$
of~$G(P,P')$ and, for each~$i=1,\ldots,p$, include in~$\mathscr E(P)$
a representative of~$g_i\cdot[P']_P$. One can see that this ensures~\eqref{newE-eq}.

There are several exceptional Haken manifolds that should be treated separately.
These are the orientable Seifert manifolds~$(\mathbb S^2;1/2,-1/3,-1/6)$,
$(\mathbb S^2;1/2,-1/4,-1/4)$, $(\mathbb S^2;2/3,-1/3,-1/3)$, $(\mathbb R\mathrm P^2;1/2,-1/2)$,
orientable manifolds that fiber over the two-torus or the Klein bottle with fiber a circle,
and orientable manifolds that fiber over a closed surface with fiber a closed interval.
In each of these cases, the symmetry group is known, and a set of generators
can be produced explicitly. If~$M$ is from this list, Matveev's algorithm
allows to detect this fact and to reveal the (Seifert) fibration structure.
After that the construction of a fertile family of subpolyhedra
of~$M$ is quite straightforward.
We assume from now on that the given manifold~$M$ is not in this list.

Let~$P\mapsto P'$ be an allowable extension with~$P'\in\mathscr E_0(P)$. We denote
by~$N$ the closure of the union of all the connected components of~$M\smallsetminus P$ having
a non-empty intersection with~$P'$, and by~$\mathit\Delta$ the  boundary pattern of~$N$
that arises in Matveev's algorithm: $\mathit\Delta=(\overline{P\smallsetminus \partial N}\cup\mathit\Gamma)\cap N$.
We also denote by~$\mathrm{Sym}(N,\mathit\Delta)$ (respectively, by~$\mathrm{Sym}(N;P)$)
the group whose elements are connected components
of the space of self-homeomorphisms~$\phi$ of~$(N,\mathit\Delta)$ (respectively, of~$(N,\mathit\Delta,P\cap N)$)
such that the restriction of~$\phi$ to~$\partial N$
is isotopic to identity within the class of homeomorphisms preserving~$\mathit\Delta$
(respectively, coincides with~$\mathrm{id}_{\partial N}$).
The subgroups of~$\mathrm{Sym}(N,\mathit\Delta)$ and~$\mathrm{Sym}(N;P)$
consisting of elements that can be represented by a homeomorphism
preserving each part of the JSJ-decomposition of~$N$
are denoted, respectively, by~$\mathrm{Sym}_{\mathrm{JSJ}}(N,\mathit\Delta)$
and~$\mathrm{Sym}_{\mathrm{JSJ}}(N;P)$.

Any homeomorphism representing an element of~$\mathrm{Sym}(N;P)$
can be continued to a self-homeomorphism of~$M$ whose restriction to~$\overline{M\smallsetminus N}$
is the identity map. This defines an inclusion~$\mathrm{Sym}(N;P)
\hookrightarrow\mathrm{Sym}_{M,\mathit\Gamma}(P)$.
So, we may view the groups~$\mathrm{Sym}(N;P)$ and~$\mathrm{Sym}_{\mathrm{JSJ}}(N;P)$
as subgroups of~$\mathrm{Sym}_{M,\mathit\Gamma}(P)$.

The orbit~$\mathrm{Sym}_{\mathrm{JSJ}}(N;P)\cdot[P']_P$
may be a proper subset of~$\mathscr O_P(P')$, since
not any homeomorphism representing an element of~$\mathrm{Sym}_{M,\mathit\Gamma}(P)$
has to preserve~$N$ or be isotopic to identity on~$(\partial N,\mathit\Delta)$,
and not any homeomorphism representing an element of~$\mathrm{Sym}(N;P)$
has to preserve each part of the JSJ-decomposition of~$N$.
However, the algorithm in~\cite{mat}
is designed so that, for any homeomorphism~$\phi$ representing an element of~$\mathrm{Sym}_{M,\mathit\Gamma}(P)$, the orbit of the equivalence class of the extension~$P\mapsto\phi(P')$
under the action~$\mathrm{Sym}_{\mathrm{JSJ}}(\phi(N);P)$ is represented by at least one extension~$P\mapsto P''$
with~$P''\in\mathscr E_0(P)$.

Thus, to have~\eqref{gpp-eq} hold true it would suffice to let~$P_1',P_2',\ldots,P_s'$ be all elements of~$\mathscr E_0(P)\smallsetminus\{P'\}$ such that the extensions~$P\mapsto P_i'$, $i=1,\ldots,s$,
have the same topological type as~$P\mapsto P'$ has, and to define~$G(P,P')$ so as
to ensure the following:
\begin{equation}\label{gpp0-eq}
\mathrm{Sym}_{\mathrm{JSJ}}(N;P)\cdot[P']_P=G(P,P')\cdot[P']_P.
\end{equation}

Now we consider all kinds of extensions defined in~\cite{mat} and show, in each case,
how to achieve~\eqref{gpp-eq}. Recall that Matveev defines 13 types of extensions,
which are denoted $E_1$, $E_2$, $E_3$, $E_4$, $E_5$, $E_6$, $E_7$, $E_8$, $E_9$,
$E_3'$, $E_4'$, $E_4''$, $E_{10}$ and listed here in the order of their priority.

If the extension~$P\mapsto P'$ is of one of the types $E_3$, $E_4$, $E_3'$, $E_4'$, or $E_4''$,
then one can see that the orbit~$\mathscr O_P(P')$ is finite and no element of this orbit
is sorted out due to the triangulation-dependent rules of~\cite{mat}. Therefore, each element of~$\mathscr
O_P(P')$
already has a representative in~$\mathscr E_0(P)$, and we may put~$G(P,P')=\{1\}$ to
ensure~\eqref{gpp-eq}.

Suppose that~$P\mapsto P'$ is an extension of one of the types~$E_1$, $E_2$, $E_7$, or~$E_8$.
This means this extension is associated, in a certain way, with a two-torus (in the~$E_1$ and~$E_7$ cases) or a longitudinal
annulus (in the~$E_2$ or~$E_8$ cases) $S\subset N$ which is \emph{essential}
in the sense of~\cite[Definition 6.3.15]{mat}.

Namely, in the~$E_1$ and~$E_2$ cases the extension~$P\mapsto P'$
amounts to the addition of~$S$ or two parallel copies of~$S$. In the~$E_7$ and~$E_8$
cases the extension~$P\mapsto P'$ is defined by~$S$ in a more complicated way as follows.
The intersection~$P\cap(N\smallsetminus\partial N)$, in these two cases, is not empty,
and its closure cuts~$N$ into `slices' each of which has the structure of a fiber bundle
over a compact surface with fiber a closed interval. First, the surface~$S$ is deformed by
an isotopy to become \emph{vertical}, which means that, after a proper deformation of
the fiber bundles, the intersection of~$S$
with each `slice' is a union of fibers. Then two parallel copies of~$S$ are added to~$P$.
Finaly, the four-valent edges of the obtained polyhedron are resolved in order
to make the polyhedron simple (see \cite[Definition 1.1.8]{mat}).

Such a resolution may not be unique, so there may be
several extensions associated with a single surface~$S$.
What is crucial here is that the obtained family of extensions is finite and the set of their equivalence
classes depends only on
the isotopy class~$[S]$ of the surface~$S$ in~$(N,\mathit\Delta)$. What depends on the triangulation of~$M$ here
is which concrete surface~$S$ is picked among others whose isotopy
classes are obtained from~$[S]$ by the action of~$\mathrm{Sym}_{\mathrm{JSJ}}(N,\mathit\Delta)$.

The finiteness of~$\mathscr E_0(P)$ in the case of extensions of type~$E_1,E_2,E_7,E_8$
is deduced from \emph{The Third Finiteness Property} \cite[Theorem 6.4.44]{mat}, which
states that there are finitely many orbits of isotopy classes of surfaces suitable for~$S\subset N$
under the action of~$\mathrm{Sym}(N,\mathit\Delta)$, and a collection of surfaces representing
all these orbits can be found algorithmically. To achieve our goal, we make a closer look at
the proof of The Third Finiteness Property. In particular, the analysis of the proof shows
that the algorithm in~\cite{mat} finds a representative in each orbit
of the $\mathrm{Sym}_{\mathrm{JSJ}}(N,\mathit\Delta)$-action on the set  of isotopy classes of surfaces
suitable for~$S$.

There are two possible situations here:
\begin{enumerate}
\item
$S$ is a JSJ-surface of~$N$;
\item
$S$ is contained in a Seifert fibered manifold~$N'\subset N$ obtained as a part of the JSJ-decomposition of~$N$,
and~$S$ is a union of non-exceptional fibers of an algorithmically constructed Seifert fibration.
\end{enumerate}
If~$S$ is a JSJ-surface of~$N$, then
the entire orbit of~$[P']_P$ under the action of~$\mathrm{Sym}_{M,\mathit\Gamma}(P)$
is finite and included into~$\mathscr E_0(P)$. We may put~$G(P,P')=\{1\}$ in this case to have~\eqref{gpp-eq}.

Suppose that~$S$ is contained in a part~$N'\subset N$ of the JSJ-decomposition of~$N$.
Then the algorithm in~\cite{mat} includes an explicit construction of a Seifert fibration~$f:N'\rightarrow F$,
where~$F$ is a compact surface. Moreover, all fibers of this fibration are vertical
with respect to the $I$-bundle structure in each `slice' (after a proper deformation of the $I$-bundle structures).
The surface~$S$ arises as the preimage~$f^{-1}(\alpha)$ of a closed curve or a proper arc~$\alpha\subset F$.

Any surface obtained from~$S$ by a homeomorphism representing
an element of~$\mathrm{Sym}_{\mathrm{JSJ}}(N,\mathit\Delta)$ is isotopic to a surface of the form~$f^{-1}(\alpha')$,
where~$\alpha'$ is obtained from~$\alpha$ by a self-homeomorphism~$\psi$ of~$F$
identical on~$\partial F$. Thus, we have a transitive action of
the mapping class group of~$F$ (with boundary fixed pointwise)
on the orbit of~$[S]$ under the action of~$\mathrm{Sym}_{\mathrm{JSJ}}(N,\mathit\Delta)$.

Due to Lickorish~\cite{lick63,lick64} and Chillingworth~\cite{chil},
one can find algorithmically a finite collection~$\psi_1,\ldots,\psi_p$
of self-homeomorphisms of~$F$ identical on~$\partial F$ and
generating the mapping class group of~$F$. Moreover,
each~$\psi_i$ can be chosen to be either a Dehn twist or a Y-homeomorphism (which
is needed in the case of a non-orientable~$F$). Then each~$\psi_i$ can be lifted to a
self-homeomorphism~$\widetilde\psi_i$ of~$N'$ preserving~$N'\cap P$ and identical on~$\partial N'$.
This is due to the fact that, by construction, the homeomorphism~$\psi_i$
is identical outside of a regular neighborhood of a circle or
a bouquet of two circles, over which the fibration~$f$ has a section.

Continue the homeomorphism~$\widetilde\psi_i$ to the whole~$M$
by~$\widetilde\psi_i|_{M\smallsetminus N'}=\mathrm{id}_{M\smallsetminus N'}$
and let~$g_i$ be the element of~$\mathrm{Sym}_{M,\mathit\Gamma}(P)$ represented by~$\widetilde\psi_i$,
$i=1,\ldots,p$. The above discussion implies that the subgroup~$G(P,P')$
of~$\mathrm{Sym}_{M,\mathit\Gamma}(P)$ generated
by~$g_1,\ldots,g_p$ satisfies~\eqref{gpp0-eq}, and hence~\eqref{gpp-eq}.
This concludes the proof of~\eqref{newE-eq} in the case when the extension~$P\mapsto P'$ is of one
of the types~$E_1$, $E_2$, $E_7$, or~$E_8$.

Suppose that the extension~$P\mapsto P'$ has the type~$E_5$. In this case, $N$ is simple,
and~$P'$ is obtained
from~$P$ by the addition of a $p$-minimal surface~$F\subset N$ (see \cite[Definition 6.3.8]{mat}) with
non-empty boundary. We define~$G(P,P')$ as the subgroup of~$\mathrm{Sym}_{\mathrm{JSJ}}(N;P)=
\mathrm{Sym}(N;P)\subset\mathrm{Sym}_{M,\mathit\Gamma}(P)$
generated by Dehn twist along boundary tori of~$N$. Then, by the nature of the procedure
in the proof of \cite[Proposition 6.3.21]{mat}, each $G(P,P')$-orbit 
contained in~$\mathscr O_P(P')$ is represented in~$\mathscr E_0(P)$.
Technically, it suffices to change the inequality~$\tau_i\leqslant k+2$ to~$\tau_i\leqslant2k+4$
on page 239 of~\cite{mat} to achieve~\eqref{newE-eq} in this case.

If the extension~$P\mapsto P'$ has type~$E_9$, we put~$G(P,P')=\mathrm{Sym}_{\mathrm{JSJ}}(N;P)=
\mathrm{Sym}(N;P)$ and note that the algorithm in~\cite{mat} already includes a procedure
for producing a generating set~$\{g_1,\ldots,g_p\}$ for this group. We only need
to compute a representative~$g_i\cdot[P']_P$ for each~$i=1,\ldots,p$ and include it
in~$\mathscr E(P)$.

Finally, suppose that the extension~$P\mapsto P'$ is of type~$E_{10}$. In this case, the manifold~$N$
carries a Seifert fibration structure~$f:N\rightarrow F$ over a compact surface~$F$ with non-empty boundary,
and the intersection~$P\cap N$ has the form~$f^{-1}(\gamma)$, where~$\gamma$
is a three-valent graph embedded in~$F$ such that~$F\smallsetminus\gamma$ is a union of open
two-discs. Thus, the closure of each connected component of~$N\smallsetminus P$ is
a solid torus. The extension~$P\mapsto P'$ consists in the addition of a pair of meridional discs of each
such solid torus to~$P$. In our case (due to the choice of~$\mathit\Gamma$), we will also have~$f(\mathit\Delta)=
\partial F$.

Suppose an orientation has been fixed on~$N$. Then with every oriented simple closed curve~$\alpha\subset F\smallsetminus\partial F$
one can associate a well defined Dehn twist~$N\rightarrow N$ along~$f^{-1}(\alpha)$ that keeps invariant
each fiber~$f^{-1}(x)$, $x\in F$. One can see that this construction yields an isomorphism~$H_1(F;\mathbb Z)\rightarrow
\mathrm{Sym}(N;P)=\mathrm{Sym}_{\mathrm{JSJ}}(N;P)$. So, in order to achieve~\eqref{gpp-eq}
in this case, we may put~$G(P,P')=\mathrm{Sym}(N;P)$
and produce a set of generators of~$G(P,P')$
from any set of generators of the first homology group~$H_1(F;\mathbb Z)$.

This concludes the proof of Theorem~\ref{fertile-alg-exist-th}.
\end{proof}

\begin{proof}[Proof of Theorem~\ref{find-gen-th}]
The algorithm for producing the required family of transformations starts from
triangulating the sphere~$\mathbb S^3$ so that~$\widehat R$ is a union of edges.
For instance, the triangulation in which every three-simplex has the form~$I*J$,
where~$I$ and~$J$ are closed intervals of~$\mathbb S^1_{\tau=1}$
and~$\mathbb S^1_{\tau=0}$, respectively, between two neighboring vertices of~$\widehat R$
is suitable for that. After a sufficiently fine subdivision we will have an open tubular neighborhood~$U$
of~$\widehat R$ such that the closure~$\overline U$ is a union of three-simplices.
Denote~$\mathbb S^3\setminus U$ by~$M$. Clearly, every element of~$\mathrm{Sym}(\widehat R)$
can be represented by a PL-self-homeomorphism of~$\mathbb S^3$ preserving~$M$,
which gives rise to an isomorphism~$\mathrm{Sym}(\widehat R)\cong\mathrm{Sym}(M)$.

Now we use Theorem~\ref{fertile-alg-exist-th} to find a fertile family of special polyhedra~$\mathscr P=\{P_0,P_1,\ldots,P_N\}$
in~$M$ such that~$M\setminus P_i$ is a union of open three-balls for all~$i$.

The fact that the complement~$M\setminus P_i$ is a union of open three-balls has the following consequences:
\begin{enumerate}
\item
the space of PL-homeomorphisms from~$P_0$ to~$P_i$ has finitely many connected components,
representatives of which can be found algorithmically;
\item
any PL-homeomorphism~$\phi:P_0\rightarrow P_i$ extends to a self-homeomorphism of~$M$
in a unique way up to isotopy.
\end{enumerate}

By construction, each polyhedron~$P_i$ contains the boundary of~$M$, and any homeomorphism~$\phi:P_i\rightarrow P_j$
preserves it. We call such a homeomorphism \emph{good} if its restriction to~$\partial M$
is isotopic to identity. Clearly, this is an algorithmically checkable condition.

Thus, we can find algorithmically a finite collection~$\mathscr A=\{(P_{i_1},\phi_1),(P_{i_2},\phi_2),\ldots,(P_{i_q},\phi_q)\}$
of pairs in which~$\phi_j$ is a good PL-homeomorphism from~$P_0$ to~$P_{i_j}$
such that,
for any $i=1,\ldots,N$ and any good PL-homeomorphism~$\phi:P_0\rightarrow P_i$
there is a pair~$(P_{i_j},\phi_j)$ in~$\mathscr A$ with~$i_j=i$
and~$\phi_j$ isotopic to~$\phi$. Since the collection~$\mathscr P$ is fertile, the elements of~$\mathrm{Sym}(\widehat R)$
represented by extensions of~$\phi_j$ to the entire~$\mathbb S^3$ generate the whole symmetry group.

Now observe that the set of all sequences~$s$ of elementary moves starting from and arriving at~$R$
is enumerable. We create an empty list~$\mathscr B$ and
start a process that generates an infinite list of all such sequences. We denote this process by~$p_*$.

For each sequence~$s$ generated by~$p_*$, we can compute, in combinatorial terms,
a PL-self-homeomorphism of~$(\mathbb S^3,\widehat R)$
representing~$\widehat s$. Moreover, the set of all such homeomorphisms is also enumerable, so we start
a process~$p_s$ that generates an infinite list of all such homeomorphisms. We run~$p_s$
in parallel with~$p_*$ and all other previously started processes.

For each homeomorphism~$\phi$ produced by~$p_s$ we check wether~$\phi|_{P_0}$ coincides with one
of~$\phi_j$, where~$(P_{i_j},\phi_j)\in\mathscr A$. When such a coincidence occurs we remove the pair~$(P_{i_j},\phi_j)$
from~$\mathscr A$ and append~$s$ to~$\mathscr B$.

Once the list~$\mathscr A$ becomes empty, all processes are terminated and~$\mathscr B$ is given as the output.
One can see that this occurs after finitely many steps of the algorithm, and the set~$\{\widehat s:s\in\mathscr B\}$
generates~$\mathrm{Sym}(\widehat R)$ as required.
\end{proof}

\section{Transverse links}\label{transverse-sec}

\emph{A $\xi$-transverse link} is a link in~$\mathbb S^3$ which is transverse to~$\xi$ at every point.
When~$\xi$ is endowed with a coorientation (which we always assume) one also defines
\emph{positively transverse} and \emph{negatively transverse} links by requiring that the
contact $1$-form evaluates positively or, respectively, negatively on every tangent vector to the link provided
that the direction of the vector agrees with the orientation of the link. In the case of~$\xi_\pm$
we use the $1$-forms~$\alpha_\pm$ given by~\eqref{alpha+-eq} and~\eqref{alpha--eq} to set
the coorientation.

Two $\xi$-transverse links~$K$ and~$K'$ are said to be \emph{equivalent} (or \emph{transversely isotopic}) if
there is an isotopy from~$K$ to~$K'$ through $\xi$-transverse links
(for~$\xi=\xi_+$ or~$\xi_-$ this is equivalent to saying that there is a
diffeomorphism~$\varphi:\mathbb S^3\rightarrow\mathbb S^3$ preserving~$\xi$ and its coorientation such that~$\varphi(K)=K'$).

Suppose that~$\xi$ is a cooriented contact structure on~$\mathbb S^3$. Then with
every $\xi$-Legendrain link type~$\mathscr L$ one associates a positively and negatively $\xi$-transverse
link types~$\mathscr T_+(\mathscr L)$  and~$\mathscr T_-(\mathscr L)$, respectively,
by requiring that a representative of~$\mathscr
T_\pm(\mathscr L)$ can be obtained by arbitrarily $C^1$-small deformation of any representative of~$\mathscr L$.
This construction is known to give a one-to-one correspondence between
positive (respectively, negative) $\xi$-transverse link types and $\xi$-Legendrian types
viewed up to negative (respectively, positive) stabilizations; see \cite{EFM,ngth}.

Thus, there are the following four transverse links associated with every rectangular diagram~$R$:
$$\mathscr T_{++}(R)=\mathscr T_+(\mathscr L_+(R)),\quad
\mathscr T_{+-}(R)=\mathscr T_-(\mathscr L_+(R)),\quad
\mathscr T_{-+}(R)=\mathscr T_+(\mathscr L_-(R)),\quad
\mathscr T_{--}(R)=\mathscr T_-(\mathscr L_-(R)),$$
and from Theorem~\ref{leg-rect-th} we have the following.

\begin{theo}[\cite{MM,ngth,OST}]\label{tran-rect-th}
\emph{(i)}
Any positive $\xi_+$-transverse link type has the form~$\mathscr T_{++}(R)$ for
some rectangular diagram~$R$.

\emph{(ii)}
For any two rectangular diagrams of links~$R_1$ and~$R_2$ we have~$\mathscr T_{++}(R_1)=
\mathscr T_{++}(R_2)$ if and only if the diagrams are related by a sequence of elementary moves
not including stabilizations and destabilizations of oriented type~$\overleftarrow{\mathrm{II}}$.

Similar statement holds for~$\mathscr T_{+-}$, $\mathscr T_{-+}$, and~$\mathscr T_{--}$
and oriented \emph(de\emph)stabilization types $\overrightarrow{\mathrm{II}}$,
$\overleftarrow{\mathrm I}$, and~$\overrightarrow{\mathrm I}$, respectively.
\end{theo}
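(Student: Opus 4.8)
The plan is to reduce this statement to its Legendrian analogue, Theorem~\ref{leg-rect-th}, by feeding the latter through the transverse--Legendrian correspondence recalled at the start of the section. The only genuinely geometric input needed beyond Theorem~\ref{leg-rect-th} is a dictionary matching the four oriented (de)stabilization types with \emph{signed} Legendrian (de)stabilizations. Recall that type~I and type~II moves realize $\xi_-$- and $\xi_+$-Legendrian (de)stabilizations respectively (they preserve $\mathscr L_+$, respectively $\mathscr L_-$). I claim the arrow decoration records the Legendrian sign: $\overrightarrow{\mathrm I}$ and $\overrightarrow{\mathrm{II}}$ are the \emph{negative} $\xi_-$- and $\xi_+$-Legendrian (de)stabilizations, while $\overleftarrow{\mathrm I}$ and $\overleftarrow{\mathrm{II}}$ are the corresponding \emph{positive} ones. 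Establishing this dictionary---by examining the standard smoothing of the relevant oriented rectangle together with the \emph{OST} conventions in the table above---is the step I expect to require the most care, since it alone fixes \emph{which} oriented type is excluded in each of the four cases. Once it is in hand, I would treat $\mathscr T_{++}=\mathscr T_+(\mathscr L_+)$ in detail and obtain the remaining three cases verbatim after replacing the relevant sign and contact structure.

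For part~(i), the correspondence writes any positive $\xi_+$-transverse type as $\mathscr T_+(\mathscr L)$ with $\mathscr L$ a $\xi_+$-Legendrian type, and Theorem~\ref{leg-rect-th}(i) supplies a rectangular diagram $R$ with $\mathscr L=\mathscr L_+(R)$, whence the given type equals $\mathscr T_{++}(R)$.

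For the ``if'' half of part~(ii), I would simply check that each permitted move fixes $\mathscr T_{++}$. Exchange moves and type~I (de)stabilizations preserve $\mathscr L_+$, hence preserve its transverse pushoff; an $\overrightarrow{\mathrm{II}}$ move alters $\mathscr L_+$ by a \emph{negative} $\xi_+$-Legendrian (de)stabilization, under which the \emph{positive} pushoff is invariant by the correspondence. Thus no excluded move is ever needed, and the permitted repertoire already fixes $\mathscr T_{++}$.

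For the ``only if'' half, assume $\mathscr T_{++}(R_1)=\mathscr T_{++}(R_2)$. By the correspondence, $\mathscr L_+(R_1)$ and $\mathscr L_+(R_2)$ become $\xi_+$-Legendrian isotopic after finitely many \emph{negative} $\xi_+$-Legendrian stabilizations; performing the matching $\overrightarrow{\mathrm{II}}$ stabilizations on each diagram produces $R_1'$ and $R_2'$ with $\mathscr L_+(R_1')=\mathscr L_+(R_2')$. Theorem~\ref{leg-rect-th}(ii) then joins $R_1'$ to $R_2'$ by a sequence of elementary moves containing \emph{no} type~II (de)stabilizations at all. Concatenating $R_1\rightsquigarrow R_1'$ (all of oriented type $\overrightarrow{\mathrm{II}}$), this $\mathscr L_+$-preserving chain, and the reversed $R_2'\rightsquigarrow R_2$ (again all $\overrightarrow{\mathrm{II}}$) yields a route from $R_1$ to $R_2$ whose only type~II moves are of oriented type $\overrightarrow{\mathrm{II}}$, so in particular no $\overleftarrow{\mathrm{II}}$ move appears---exactly the assertion. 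The whole argument is a formal translation once the sign dictionary of the first paragraph is secured; an error there would merely interchange $\overrightarrow{\mathrm{II}}$ with $\overleftarrow{\mathrm{II}}$ and contradict the stated exclusion, which is why I flag it as the crux.
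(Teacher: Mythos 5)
Your proposal is correct and follows exactly the route the paper intends: the paper gives no proof of this theorem, citing it to the literature and prefacing it with ``from Theorem~\ref{leg-rect-th} we have the following,'' i.e.\ it is meant to be the combination of the transverse--Legendrian correspondence (positive/negative $\xi$-transverse types $\leftrightarrow$ $\xi$-Legendrian types modulo negative/positive stabilizations) with Theorem~\ref{leg-rect-th}, which is precisely your argument. Your sign dictionary (arrowed type $\overrightarrow{\mathrm{II}}$ realizing the negative $\xi_+$-Legendrian stabilization, etc.) is the correct one, consistent with the \emph{OST} convention table and with the exclusions stated in the theorem, so the flagged crux is resolved as you anticipated.
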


There is a full analogue of Theorem~\ref{main-th} for transverse links, which is as follows.

\begin{theo}\label{main1-th}
There exists an algorithm that, given two rectangular diagrams of links~$R_1$ and~$R_2$,
decides whether or not~$\mathscr T_{++}(R_1)=\mathscr T_{++}(R_2)$.
\end{theo}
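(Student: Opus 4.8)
The plan is to mirror, step by step, the five-step algorithm of Section~\ref{algorithm-sec} that proves Theorem~\ref{main-th}, replacing everywhere the Legendrian invariant~$\mathscr L_+$ and its associated move classes by the transverse counterparts dictated by Theorem~\ref{tran-rect-th}(ii). The role played by the ``type~I moves'' (those preserving~$\mathscr L_+$) will now be played by the moves preserving~$\mathscr T_{++}$, that is, exchange moves together with (de)stabilizations of oriented types~$\overrightarrow{\mathrm I}$, $\overleftarrow{\mathrm I}$, and~$\overrightarrow{\mathrm{II}}$ (all but~$\overleftarrow{\mathrm{II}}$); the role of the ``type~II moves'' (preserving~$\mathscr L_-$) will be played by the \emph{complementary} moves, namely exchange moves together with (de)stabilizations of oriented type~$\overleftarrow{\mathrm{II}}$ only. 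I write~$\mathrm{Mor}_{\mathscr T_{++}}(R_1,R_2)$ and~$\mathrm{Mor}_{\overleftarrow{\mathrm{II}}}(R_1,R_2)$ for the corresponding sets of induced morphisms, and~$\mathrm{Sym}_{\mathscr T_{++}}$, $\mathrm{Sym}_{\overleftarrow{\mathrm{II}}}$ for their self-morphism versions.

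The key structural facts I would need are exact analogues of the three results underlying the Legendrian algorithm. First, a transverse version of Theorem~\ref{r3-mor-th}: every~$\eta\in\mathrm{Mor}(R_1,R_2)$ factors as~$\eta=\eta_{\overleftarrow{\mathrm{II}}}\circ\eta_{\mathscr T_{++}}$ with~$\eta_{\mathscr T_{++}}\in\mathrm{Mor}_{\mathscr T_{++}}(R_1,R_3)$ and~$\eta_{\overleftarrow{\mathrm{II}}}\in\mathrm{Mor}_{\overleftarrow{\mathrm{II}}}(R_3,R_2)$ for a suitable~$R_3$; equivalently, $\eta$ is induced by a chain in which all moves other than~$\overleftarrow{\mathrm{II}}$-(de)stabilizations precede the~$\overleftarrow{\mathrm{II}}$-(de)stabilizations. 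This immediately yields the transverse analogue of Theorem~\ref{r3-th} (existence of~$R_3$ with~$\mathscr T_{++}(R_3)=\mathscr T_{++}(R_1)$ joined to~$R_2$ by exchanges and~$\overleftarrow{\mathrm{II}}$-moves). Second, a transverse version of Theorem~\ref{I+II=>exch-th}: $\mathrm{Mor}_{\mathscr T_{++}}(R_1,R_2)\cap\mathrm{Mor}_{\overleftarrow{\mathrm{II}}}(R_1,R_2)=\mathrm{Mor}_0(R_1,R_2)$. Together these give the counting bijection of Corollary~\ref{count-coro} with~$\mathrm{Sym}_+,\mathrm{Sym}_-$ replaced by~$\mathrm{Sym}_{\mathscr T_{++}},\mathrm{Sym}_{\overleftarrow{\mathrm{II}}}$. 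Crucially, Proposition~\ref{sym-no-stab-prop} and Theorem~\ref{find-gen-th} are already stated at the resolution of individual oriented types, so they apply verbatim with~$X=\{1,\dots,m\}\times\{\overrightarrow{\mathrm I},\overleftarrow{\mathrm I},\overrightarrow{\mathrm{II}}\}$.

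With these in hand the algorithm runs as in Section~\ref{algorithm-sec}. Step~1 (the topological comparison) is unchanged. In Step~2 I run two enumerations in parallel, one from~$R_1$ using only~$\mathscr T_{++}$-moves and one from~$R_2$ using only exchanges and~$\overleftarrow{\mathrm{II}}$-moves, halting at a common combinatorial type~$R_3$; termination is guaranteed by the transverse Theorem~\ref{r3-th}. Step~3 produces chains generating~$\mathrm{Sym}(R_3)$ via Theorem~\ref{find-gen-th}. In Step~4 I stabilize~$R_3$ and~$R_2$ by the same number~$\max_j n_{i,t}(s_j)$ of~$\overrightarrow{\mathrm I}$-, $\overleftarrow{\mathrm I}$-, and~$\overrightarrow{\mathrm{II}}$-stabilizations on each component, obtaining~$R_3',R_2'$; applying identical stabilizations to both keeps them joined by exchanges and~$\overleftarrow{\mathrm{II}}$-moves (by the argument of the Step~4 lemma via Lemma~\ref{stab-first-lem}). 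Step~5 searches the exchange class of~$R_3'$ and returns ``True'' iff~$R_2'$ occurs in it. Correctness is the transverse analogue of Lemma~\ref{final-lem}: given a chain~$s$ certifying~$\mathscr T_{++}(R_3')=\mathscr T_{++}(R_2')$, Proposition~\ref{sym-no-stab-prop} lets one realize the~$\mathrm{Sym}(R_3')$-ambiguity using only exchanges and~$\overleftarrow{\mathrm{II}}$-moves, yielding a second chain~$s'$ of that kind inducing the same morphism as~$s$; the transverse intersection theorem then forces~$R_3'$ and~$R_2'$ to be related by exchange moves.

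The main obstacle is establishing the two structural theorems at the resolution of a \emph{single} oriented type rather than of a whole type. Theorem~\ref{r3-mor-th} only commutes all of type~I past all of type~II, whereas here I must additionally commute~$\overrightarrow{\mathrm{II}}$-moves past~$\overleftarrow{\mathrm{II}}$-moves so that~$\overleftarrow{\mathrm{II}}$ alone can be isolated, and the intersection theorem must likewise distinguish~$\overrightarrow{\mathrm{II}}$ from~$\overleftarrow{\mathrm{II}}$. Both refinements are precisely what the sign-sensitive form of the bypass technique of~\cite{bypasses}, as developed in~\cite{trleg}, is designed to supply, so I would import them and then re-run the argument of Theorem~\ref{r3-mor-th} and the surface-isotopy argument of~\cite{dysha18} with oriented-type bookkeeping. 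A useful conceptual check for the intersection statement is that any morphism in~$\mathrm{Mor}_{\mathscr T_{++}}\cap\mathrm{Mor}_{\overleftarrow{\mathrm{II}}}$ preserves all four transverse types~$\mathscr T_{++},\mathscr T_{+-},\mathscr T_{-+},\mathscr T_{--}$: since the moves preserving both~$\mathscr T_{++}$ and~$\mathscr T_{+-}$ are exactly the type~I moves and those preserving both~$\mathscr T_{-+}$ and~$\mathscr T_{--}$ are exactly the type~II moves, such a morphism ought to lie in~$\mathrm{Mor}_+\cap\mathrm{Mor}_-=\mathrm{Mor}_0$ by Theorem~\ref{I+II=>exch-th}. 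Promoting this invariant-level observation to a statement about representing chains is the crux of the matter.
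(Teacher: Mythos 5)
Your overall architecture rests on two structural theorems that you name but do not prove: a factorization of an arbitrary morphism into ($\mathscr T_{++}$-preserving moves) followed by (exchanges and $\overleftarrow{\mathrm{II}}$-moves only), and the corresponding intersection statement $\mathrm{Mor}_{\mathscr T_{++}}\cap\mathrm{Mor}_{\overleftarrow{\mathrm{II}}}=\mathrm{Mor}_0$. Both are strictly stronger than Theorems~\ref{r3-mor-th} and~\ref{I+II=>exch-th} (a chain avoiding $\overleftarrow{\mathrm{II}}$ may freely mix type~I moves with $\overrightarrow{\mathrm{II}}$ moves, so it certifies neither $\mathscr L_+$- nor $\mathscr L_-$-equivalence), and your Step~2 termination and Step~5 correctness both collapse without them. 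You correctly identify this as ``the crux of the matter,'' but resolving it is deferred to an unspecified ``sign-sensitive'' refinement of the bypass technique that you hope to import; as written, the proposal establishes neither statement, and your invariant-level heuristic (that preserving all four $\mathscr T_{\pm\pm}$ should force membership in $\mathrm{Mor}_+\cap\mathrm{Mor}_-$) is exactly the kind of invariants-versus-chains inference the whole paper is at pains to avoid.

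The paper's actual proof shows this machinery is unnecessary. It runs Steps~1--4 of the Legendrian algorithm \emph{verbatim} — in particular, Step~4 still pads only with type~I stabilizations, so that $\mathscr L_-(R_2')=\mathscr L_-(R_3')$ and the hypotheses of Lemma~\ref{final-lem} remain available — and modifies only the final check: decide whether $R_2'$ and $R_3'$ are related by exchanges and $\overrightarrow{\mathrm{II}}$-(de)stabilizations, which is precisely the decidability result of~\cite[Theorem~4.1]{trleg}. Correctness then needs no new commutation or intersection theorem: if $\mathscr T_{++}(R_2')=\mathscr T_{++}(R_3')$, Theorem~\ref{tran-rect-th} gives a chain avoiding $\overleftarrow{\mathrm{II}}$, so padding both diagrams with $\overrightarrow{\mathrm{II}}$ stabilizations produces $R_2''$, $R_3''$ with $\mathscr L_+(R_2'')=\mathscr L_+(R_3'')$, and the already-proven argument of Lemma~\ref{final-lem} makes these exchange-equivalent. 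The lesson is that the only genuinely new ingredient required is the decidability of the exchange-plus-$\overrightarrow{\mathrm{II}}$ relation, not a wholesale re-partition of the move types; I would rework your proof along these lines rather than attempt the single-oriented-type analogues of Theorems~\ref{r3-mor-th} and~\ref{I+II=>exch-th}.
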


\begin{proof}
We follow the lines of the proof of Theorem~\ref{main-th} given in Section~\ref{algorithm-sec},
until Step~5, where a modification occurs. Namely, at Step~5 we check
whether or not the diagrams~$R_2'$ and~$R_3'$ are related by a sequence of elementary moves
that includes only exchange moves and (de)stabilizations of oriented type~$\overrightarrow{\mathrm{II}}$.
This is decidable due to~\cite[Theorem~4.1]{trleg}. The answer is positive if and only if~$\mathscr T_{++}(R_1)=\mathscr T_{++}(R_2)$,
which can be seen, by analogy with Lemma~\ref{final-lem}, as follows.

Suppose that~$\mathscr T_{++}(R_3')=\mathscr T_{++}(R_2')$ holds, which is equivalent to~$\mathscr T_{++}(R_1)=\mathscr T_{++}(R_2)$.
According to Theorem~\ref{tran-rect-th} this means that there exists a sequence~$s$ of elementary moves
not including (de)stabilizations of oriented type~$\overleftarrow{\mathrm{II}}$. Therefore,
there exist rectangular diagrams~$R_2''$ and~$R_3''$ obtained from~$R_2'$ and~$R_3'$, respectively,
by some number of type~$\overrightarrow{\mathrm{II}}$ stabilizations such that~$\mathscr L_+(R_2'')=\mathscr L_+(R_3'')$.
Now the argument from the proof of Lemma~\ref{final-lem} shows that~$R_2''$ and~$R_3''$
are exchange-equivalent.
\end{proof}

\section{Examples}\label{example-sec}

Here we use Corollary~\ref{count-coro} to confirm several previously unsettled conjectures
about non-equivalence of Legendrian knots. The conjectures are
formulated in the Legendrian knot atlas by W.\,Chongchitmate and L.\,Ng~\cite{chong2013}. 
Namely, the $\xi_+$-Legendrian (respectively, $\xi_-$-Legendrian) classes in the first row
(respectively, leftmost column) of each of the tables in Figures~\ref{7_5-fig}--\ref{11n_118-fig}
are conjectured to be pairwise distinct in~\cite{chong2013} (except one case, which is overlooked in~\cite{chong2013}).

The collections of diagrams in Figures~\ref{7_5-fig}--\ref{11n_118-fig}
have the following three properties:
\begin{enumerate}
\item
all diagrams in each column (respectively, row) of the table represent the same
$\xi_+$-Legendrian type (respectively, the same~$\xi_-$-Legendrian type);
\item
all diagrams except those in the leftmost column and the top row represent pairwise distinct
exchange classes;
\item
any exchange class~$c$ such that~$\mathscr L_+(c)$ appears in the top row and~$\mathscr L_-(c)$
appears in the leftmost column is represented by a diagram outside the top row and leftmost column.
\end{enumerate}
All this is verified by an exhaustive search. To confirm the declared equivalences of Legendrian knots
one stabilization suffices in each case.

The knot type and its symmetry group
are specified in the top left corner of each table. All of them can be verified by
the Knotscape program~\cite{knotscape}. In most cases, the symmetry groups
have also been computed previously in the literature. Namely, the cases of
the knots~$7_5$, $7_7$, and~$8_{21}$ are covered by Theorem~6.2 in~\cite{sak90},
the knots~$9_{47}$ and~$9_{49}$ match Example~1.11 in~\cite{ks92},
the knots~$11n_{19}$ and~$11n_{38}$ are Montesinos knots, which are treated in Theorem~1.3
in~\cite{BoZim}.

The tables in Figures~\ref{7_5-fig}--\ref{11n_118-fig} confirm the conjectures of~\cite{chong2013}
by reducing the negation of the latter to a contradiction with Corollary~\ref{count-coro}.
Indeed, in all these cases except for the case of the knot $9_{49}$, merging any two rows
or columns would create a cell with more than~$|G|$ diagrams in it, where~$G$ is the symmetry group.
In the case of the knot~$9_{49}$ there are just two $\xi_+$-Legendrian types in question,
and merging their respective columns would create a cell with exactly two diagrams in
it, which is impossible, since the symmetry group is~$\mathbb Z_3$.

The conjectures of~\cite{chong2013} about Legendrian knots
having knot types~$9_{42}$, $9_{43}$, $9_{44}$, $9_{45}$,
$10_{128}$, and~$10_{160}$ are settled in~\cite{dysha2023} by using the same method.
Those related to knot types~$6_2$ and~$7_6$ are confirmed in~\cite{dynn-pras-7-6,dp2021}
in a different way, but can also be confirmed by counting exchange classes.

In the remaining three unresolved
cases in~\cite{chong2013}, which involve knot types~$7_4$, $9_{48}$, and~$10_{136}$,
the computation of the respective symmetry groups~$\mathrm{Sym}_\pm$ appears to be essential
for applying Corollary~\ref{count-coro}. This is done in~\cite{prsh}, where all
conjectures of~\cite{chong2013} about the corresponding Legendrian types
are also confirmed.

Finally, we note that the top part of the mountain range for the knot~$11n_{19}$ shown in~\cite{chong2013}
contains an error, which is corrected in Figure~\ref{11n_19-fig}.

\begin{figure}[ht]
\begin{tabular}{c|c|cc|cc}
$7_5$, $\mathbb Z_2$
&$\mathscr L_+\left(\raisebox{-14pt}{\includegraphics[scale=.16]{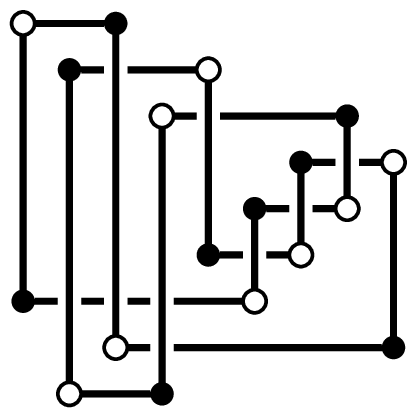}}\right)$
&\multicolumn2{c|}{$\mathscr L_+\left(\raisebox{-14pt}{\includegraphics[scale=.16]{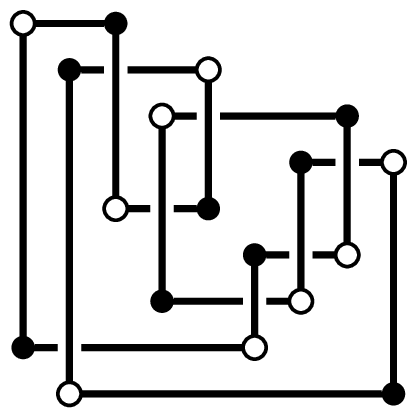}}\right)$}
&\multicolumn2c{$\mathscr L_+\left(\raisebox{-14pt}{\includegraphics[scale=.16]{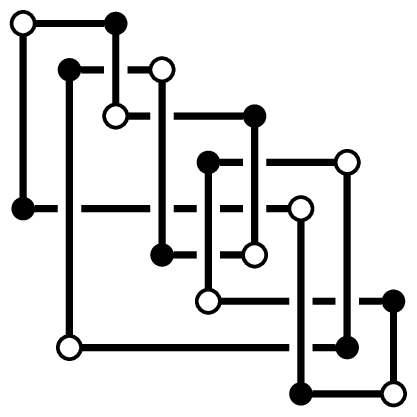}}\right)$}\\\hline
$\mathscr L_-\left(\raisebox{-14pt}{\includegraphics[scale=.16]{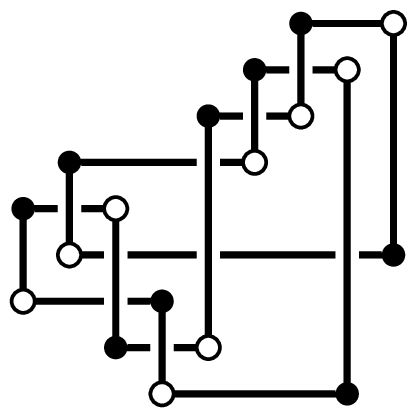}}\right)$
&\raisebox{-14pt}{\includegraphics[scale=.16]{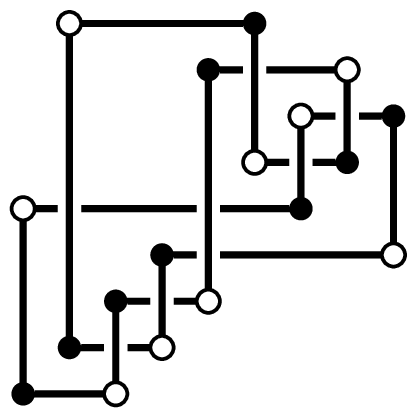}}
&\raisebox{-14pt}{\includegraphics[scale=.16]{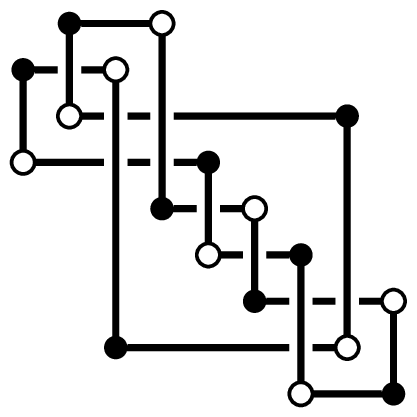}}
&\raisebox{-14pt}{\includegraphics[scale=.16]{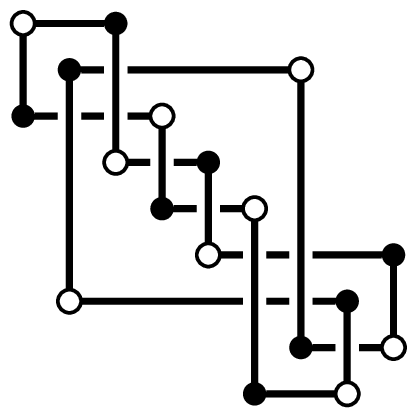}}
&\raisebox{-14pt}{\includegraphics[scale=.16]{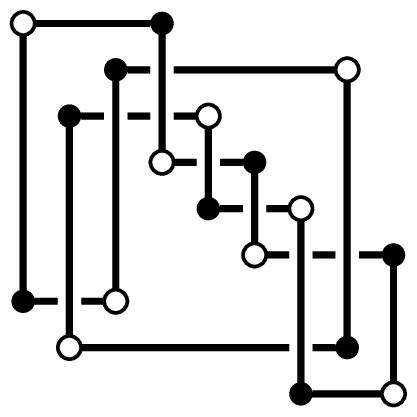}}
&\raisebox{-14pt}{\includegraphics[scale=.16]{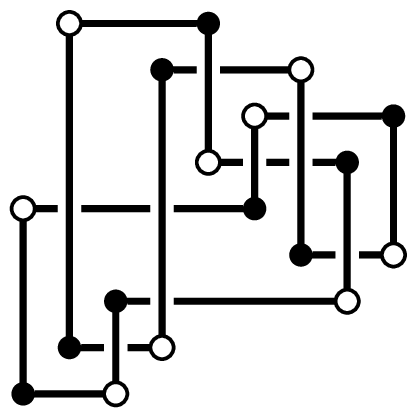}}
\end{tabular}
\caption{Diagrams of the knot $7_5$}\label{7_5-fig}
\end{figure}

\begin{figure}[ht]
\begin{tabular}{c|cc|cc|cc|cc}
$7_7$, $\mathbb Z_4$
&\multicolumn2{c|}{$\mathscr L_+\left(\raisebox{-14pt}{\includegraphics[scale=.16]{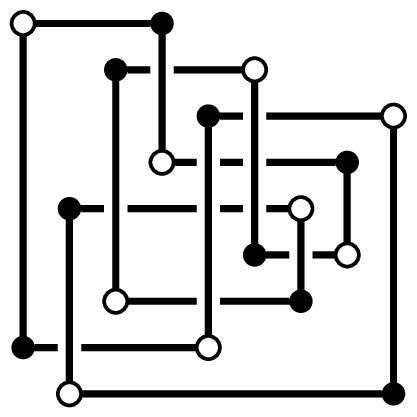}}\right)$}
&\multicolumn2{c|}{$\mathscr L_+\left(\raisebox{-14pt}{\includegraphics[scale=.16]{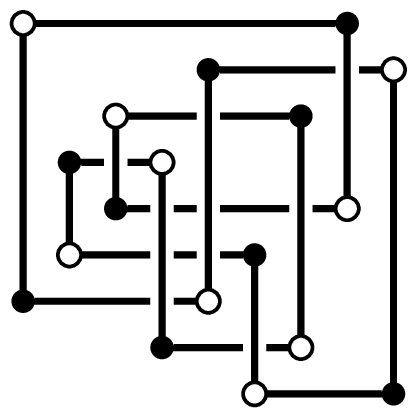}}\right)$}
&\multicolumn2{c|}{$\mathscr L_+\left(\raisebox{-14pt}{\includegraphics[scale=.16]{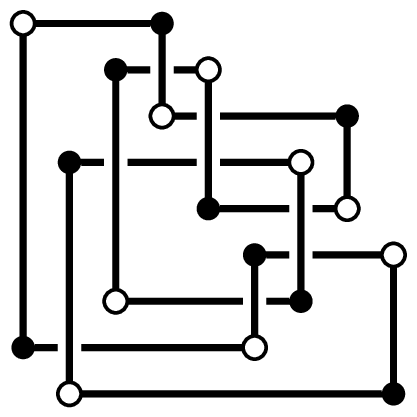}}\right)$}
&\multicolumn2c{$\mathscr L_+\left(\raisebox{-14pt}{\includegraphics[scale=.16]{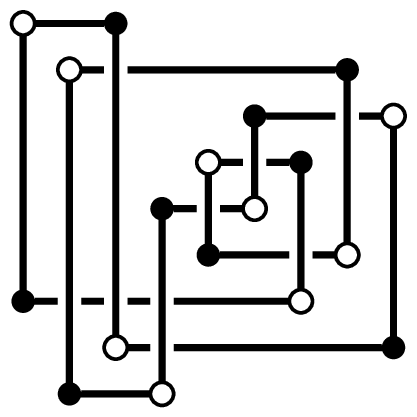}}\right)$}\\\hline
\multirow2*{$\mathscr L_-\left(\raisebox{-14pt}{\includegraphics[scale=.16]{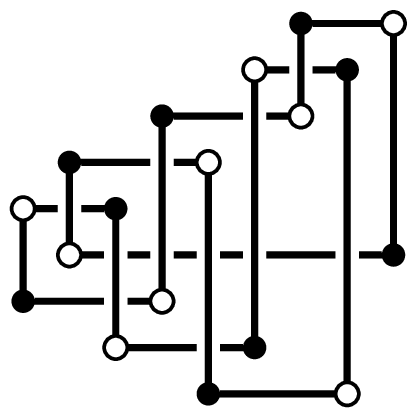}}\right)$}
&\raisebox{-14pt}{\includegraphics[scale=.16]{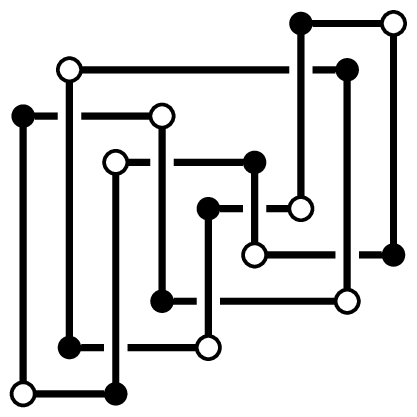}}
&\raisebox{-14pt}{\includegraphics[scale=.16]{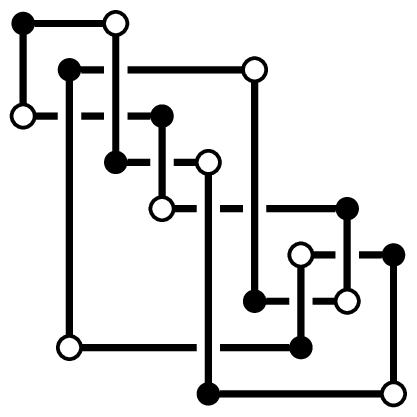}}
&\raisebox{-14pt}{\includegraphics[scale=.16]{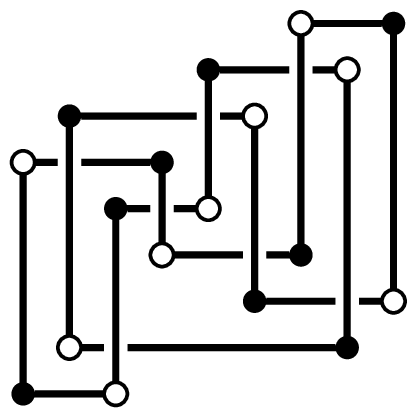}}
&\raisebox{-14pt}{\includegraphics[scale=.16]{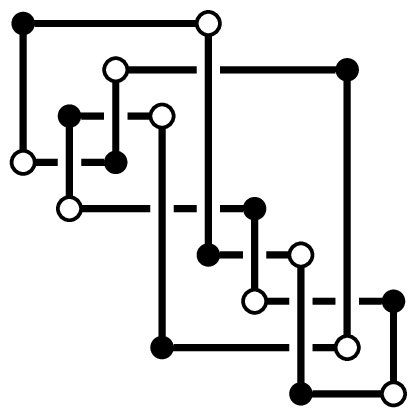}}
&\raisebox{-14pt}{\includegraphics[scale=.16]{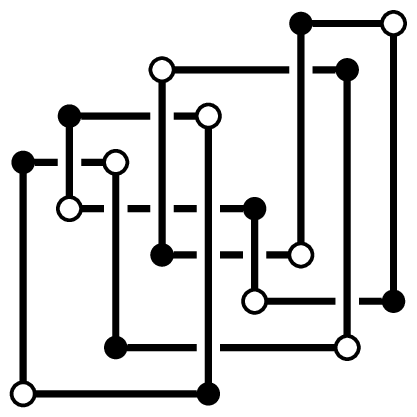}}
&\raisebox{-14pt}{\includegraphics[scale=.16]{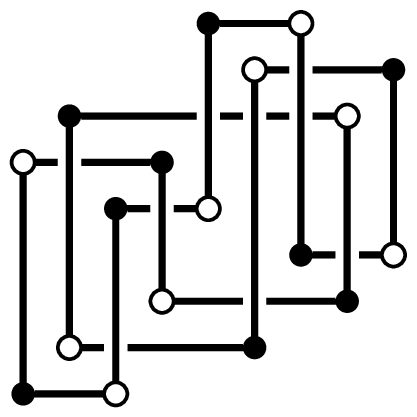}}
&\raisebox{-14pt}{\includegraphics[scale=.16]{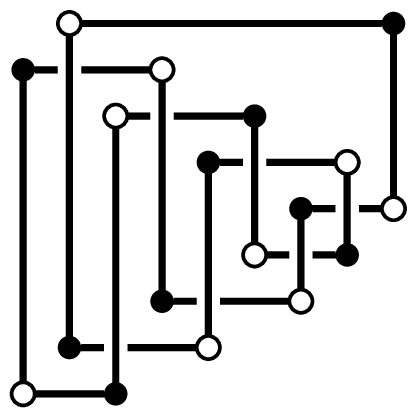}}
&\raisebox{-14pt}{\includegraphics[scale=.16]{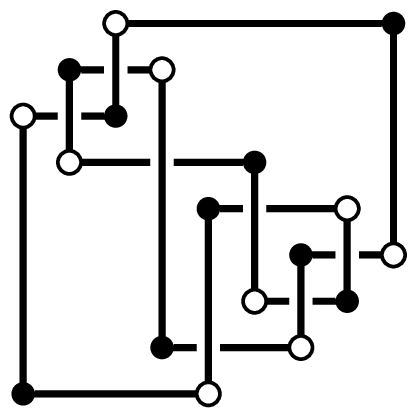}}
\\
&\raisebox{-14pt}{\includegraphics[scale=.16]{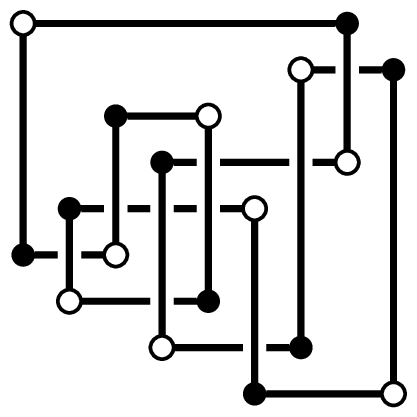}}
&\raisebox{-14pt}{\includegraphics[scale=.16]{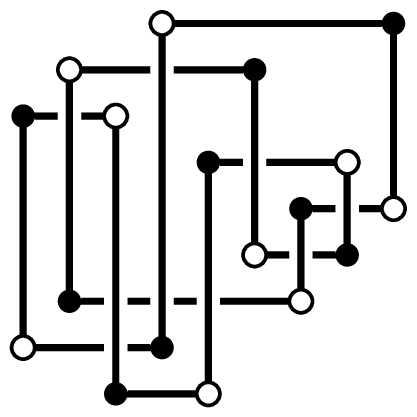}}
&\raisebox{-14pt}{\includegraphics[scale=.16]{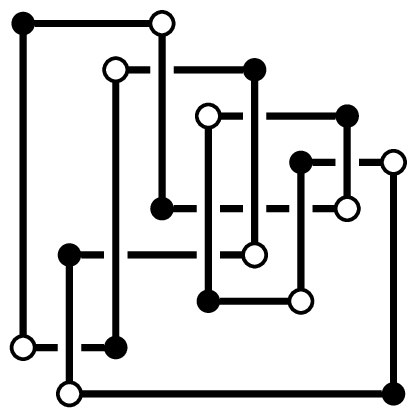}}
&\raisebox{-14pt}{\includegraphics[scale=.16]{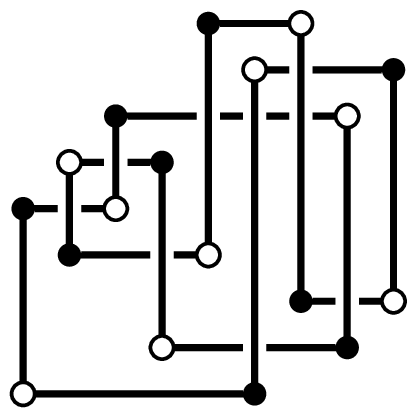}}
&\raisebox{-14pt}{\includegraphics[scale=.16]{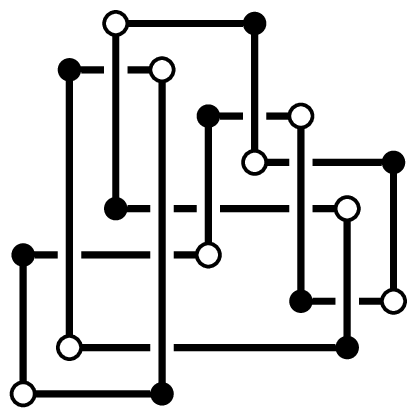}}
&\raisebox{-14pt}{\includegraphics[scale=.16]{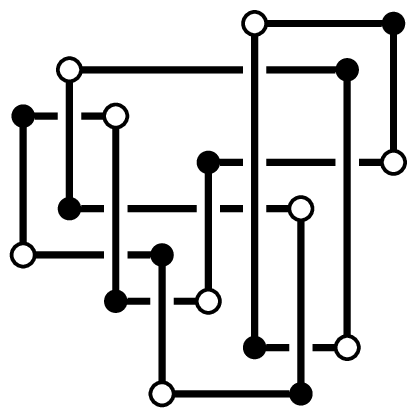}}
&\raisebox{-14pt}{\includegraphics[scale=.16]{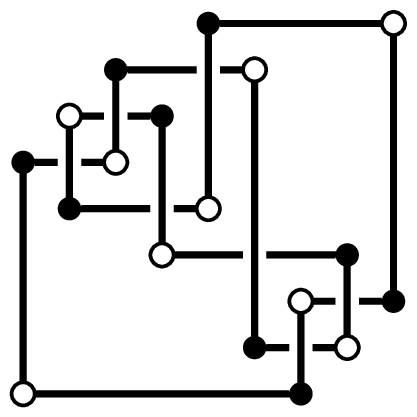}}
&\raisebox{-14pt}{\includegraphics[scale=.16]{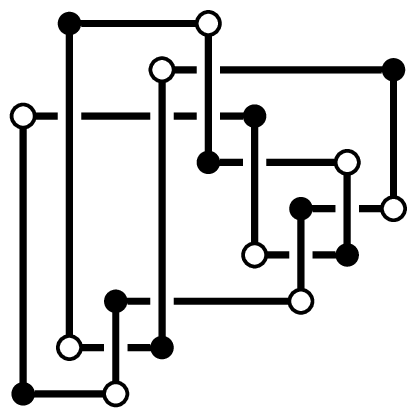}}
\\\hline
\multirow2*{$\mathscr L_-\left(\raisebox{-14pt}{\includegraphics[scale=.16]{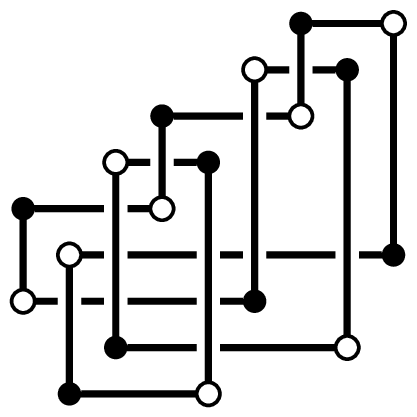}}\right)$}
&\raisebox{-14pt}{\includegraphics[scale=.16]{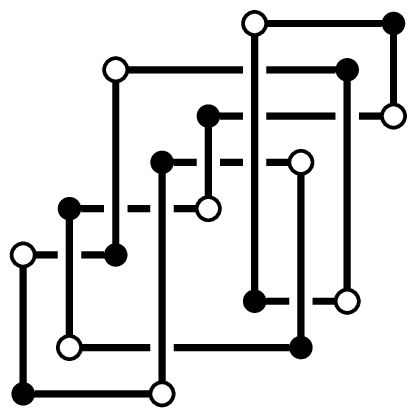}}
&\raisebox{-14pt}{\includegraphics[scale=.16]{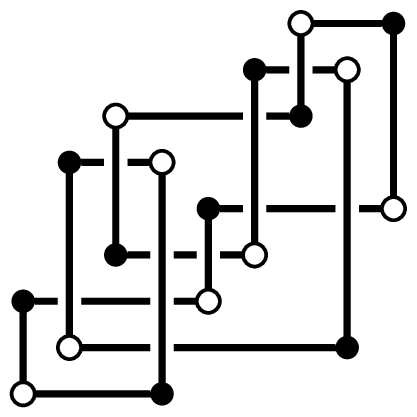}}
&\raisebox{-14pt}{\includegraphics[scale=.16]{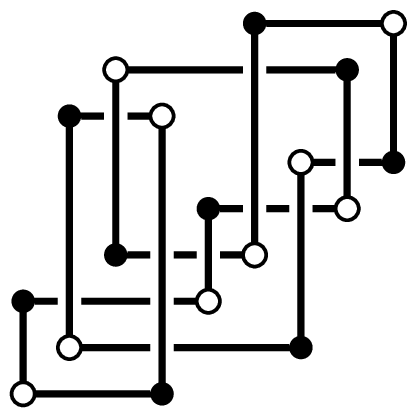}}
&\raisebox{-14pt}{\includegraphics[scale=.16]{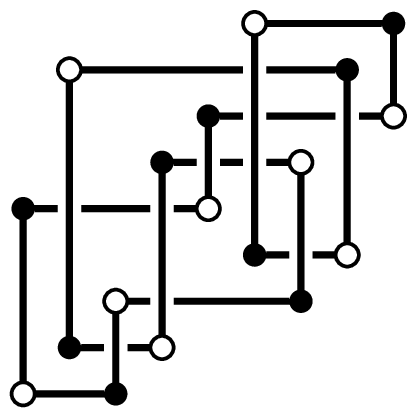}}
&\raisebox{-14pt}{\includegraphics[scale=.16]{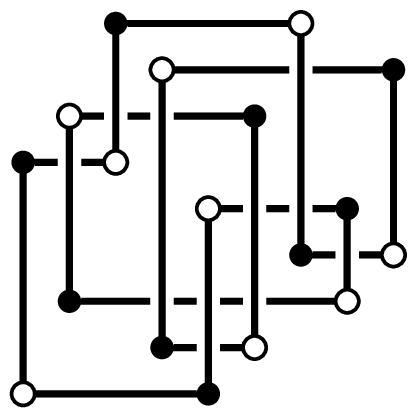}}
&\raisebox{-14pt}{\includegraphics[scale=.16]{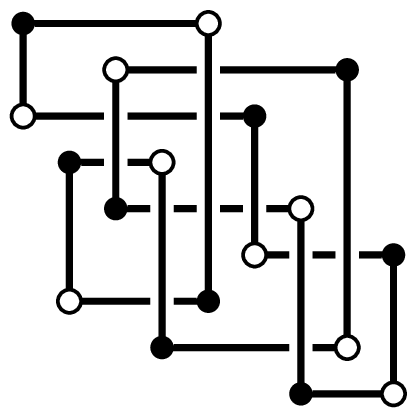}}
&\raisebox{-14pt}{\includegraphics[scale=.16]{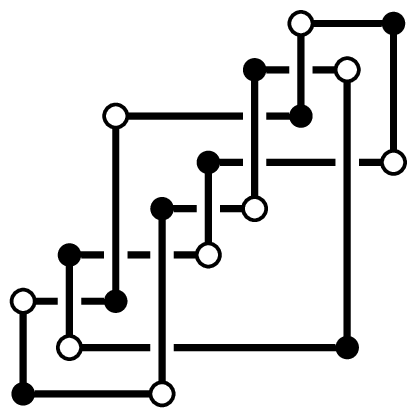}}
&\raisebox{-14pt}{\includegraphics[scale=.16]{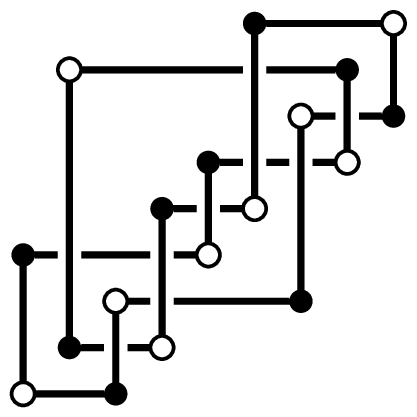}}
\\
&\raisebox{-14pt}{\includegraphics[scale=.16]{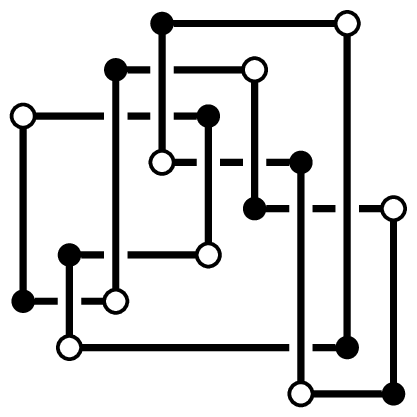}}
&\raisebox{-14pt}{\includegraphics[scale=.16]{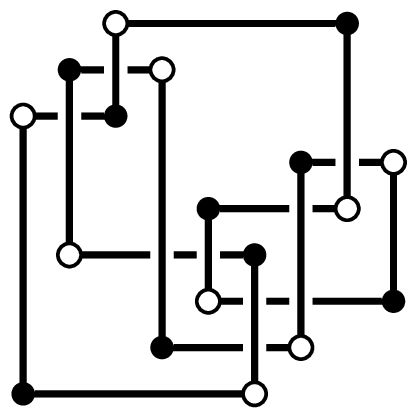}}
&\raisebox{-14pt}{\includegraphics[scale=.16]{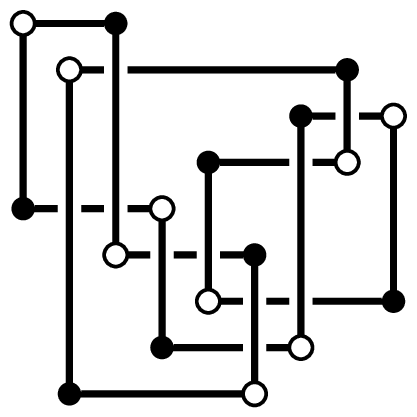}}
&\raisebox{-14pt}{\includegraphics[scale=.16]{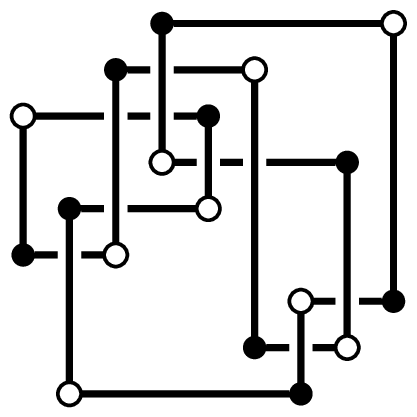}}
&\raisebox{-14pt}{\includegraphics[scale=.16]{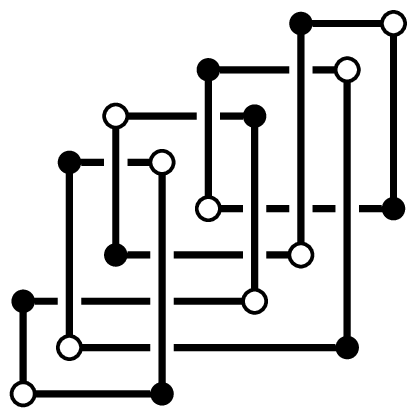}}
&\raisebox{-14pt}{\includegraphics[scale=.16]{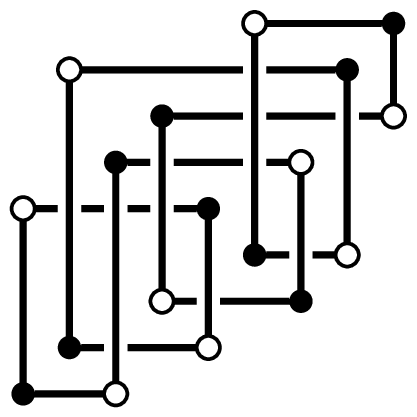}}
&\raisebox{-14pt}{\includegraphics[scale=.16]{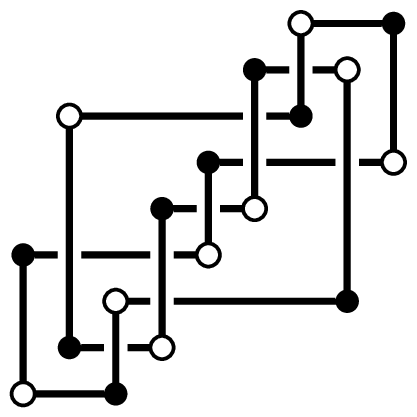}}
&\raisebox{-14pt}{\includegraphics[scale=.16]{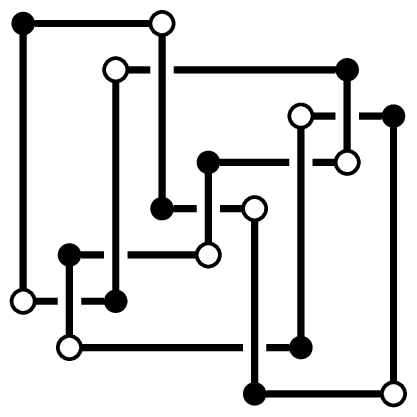}}
\end{tabular}
\caption{Diagrams of the knot $7_7$}\label{7_7-fig}
\end{figure}

\begin{figure}[ht]
\begin{tabular}{c|cc|cc|cc}
$8_{21}$, $\mathbb Z_2$
&\multicolumn2{c|}{$\mathscr L_+\left(\raisebox{-14pt}{\includegraphics[scale=.16]{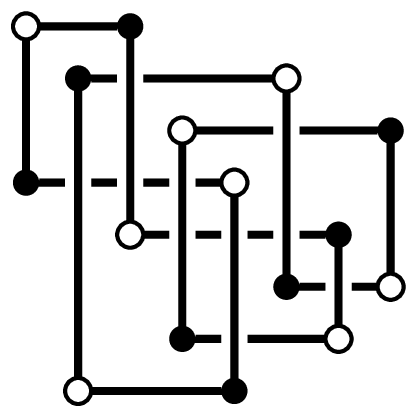}}\right)$}
&\multicolumn2{c|}{$\mathscr L_+\left(\raisebox{-14pt}{\includegraphics[scale=.16]{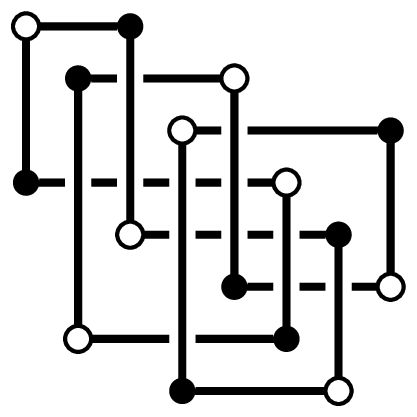}}\right)$}
&\multicolumn2c{$\mathscr L_+\left(\raisebox{-14pt}{\includegraphics[scale=.16]{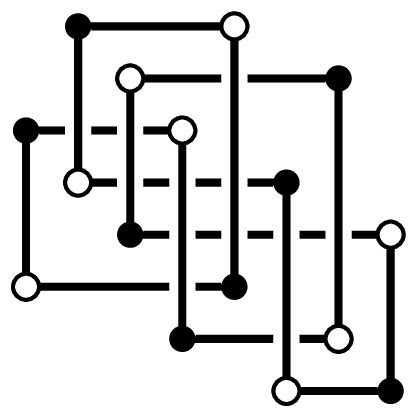}}\right)$}\\\hline
$\mathscr L_-\left(\raisebox{-14pt}{\includegraphics[scale=.16]{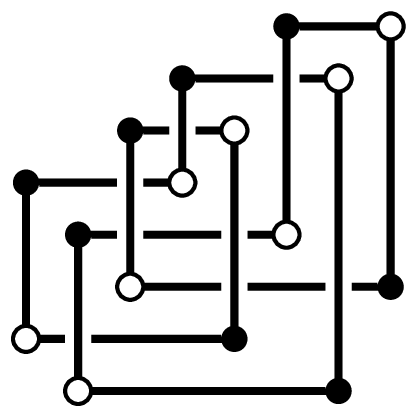}}\right)$
&\raisebox{-14pt}{\includegraphics[scale=.16]{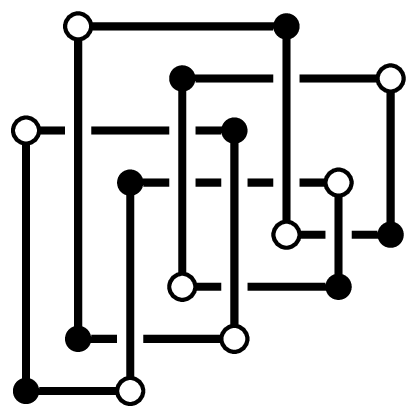}}
&\raisebox{-14pt}{\includegraphics[scale=.16]{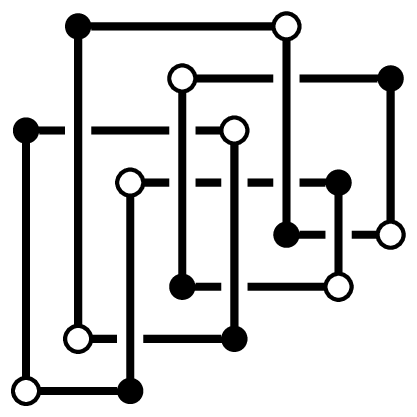}}
&\raisebox{-14pt}{\includegraphics[scale=.16]{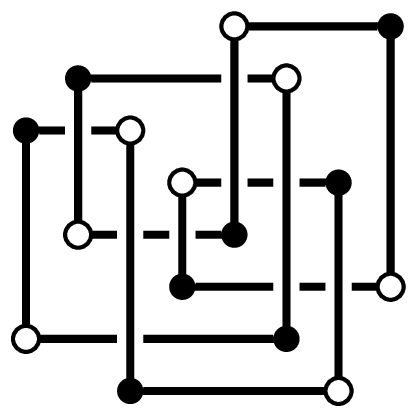}}
&\raisebox{-14pt}{\includegraphics[scale=.16]{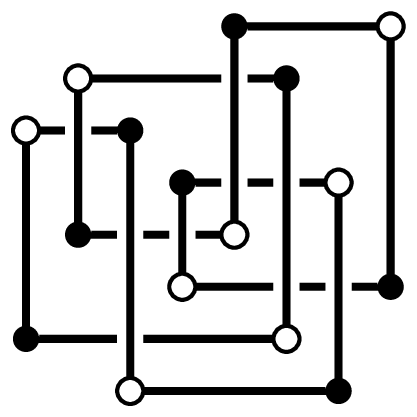}}
&\raisebox{-14pt}{\includegraphics[scale=.16]{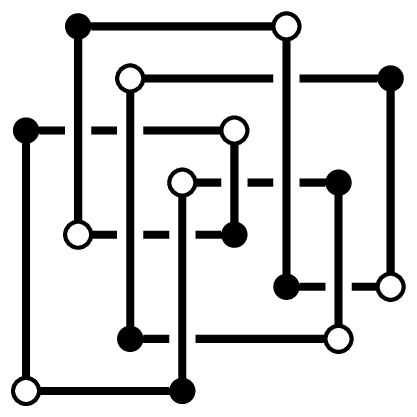}}
&\raisebox{-14pt}{\includegraphics[scale=.16]{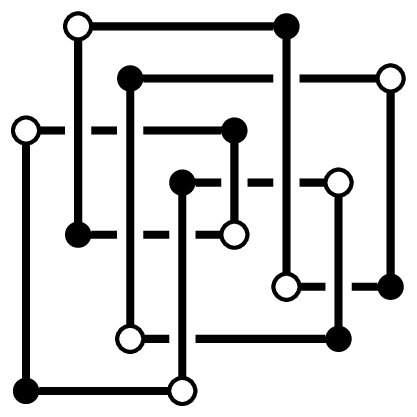}}
\end{tabular}
\caption{Diagrams of the knot $8_{21}$}\label{8_21-fig}
\end{figure}

\begin{figure}[ht]
\begin{tabular}{c|ccc}
$9_{47}$, $\mathbb Z_3$
&\multicolumn3c{$\mathscr L_+\left(\raisebox{-14pt}{\includegraphics[scale=.16]{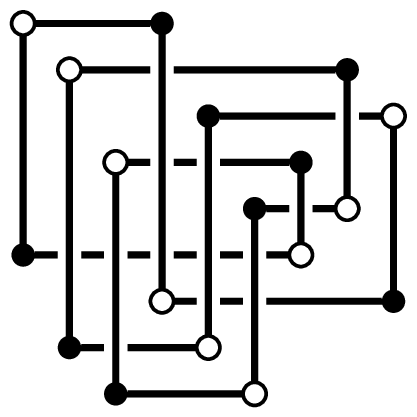}}\right)$}\\\hline
$\mathscr L_-\left(\raisebox{-14pt}{\includegraphics[scale=.16]{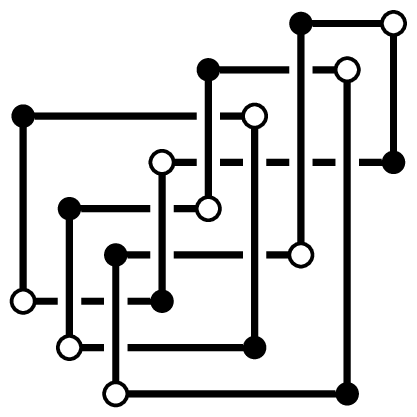}}\right)$
&\raisebox{-14pt}{\includegraphics[scale=.16]{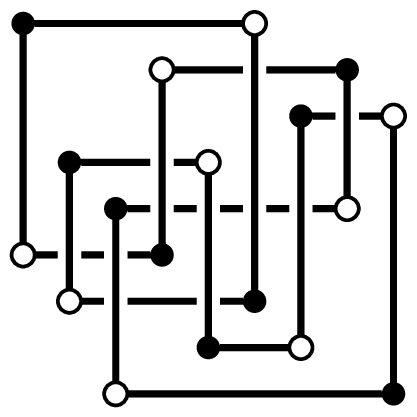}}
&\raisebox{-14pt}{\includegraphics[scale=.16]{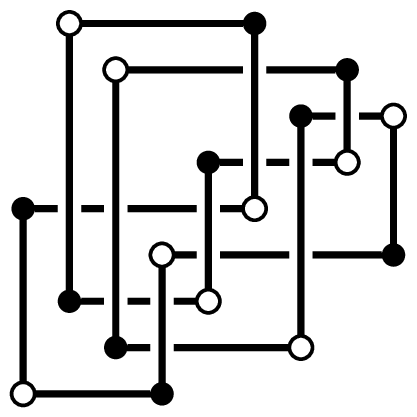}}
&\raisebox{-14pt}{\includegraphics[scale=.16]{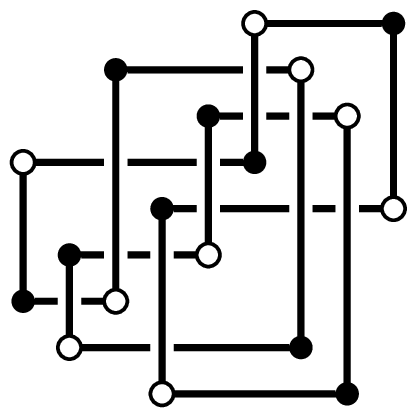}}
\\\hline
$\mathscr L_-\left(\raisebox{-14pt}{\includegraphics[scale=.16]{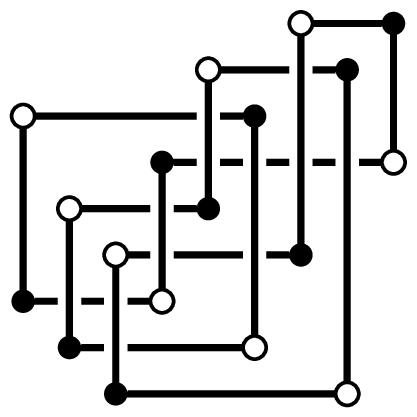}}\right)$
&\raisebox{-14pt}{\includegraphics[scale=.16]{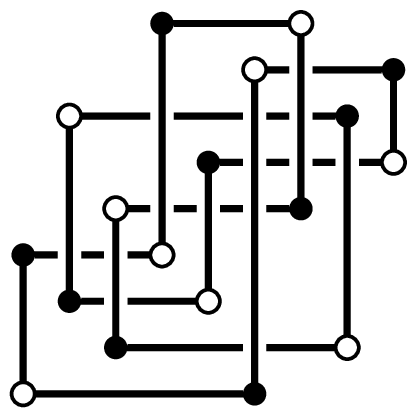}}
&\raisebox{-14pt}{\includegraphics[scale=.16]{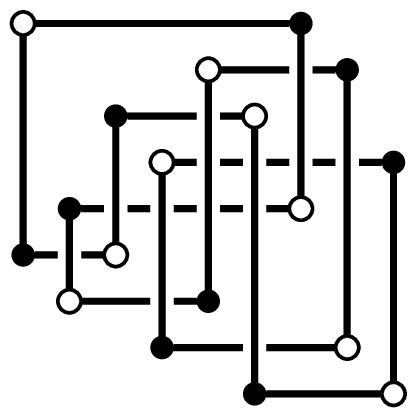}}
&\raisebox{-14pt}{\includegraphics[scale=.16]{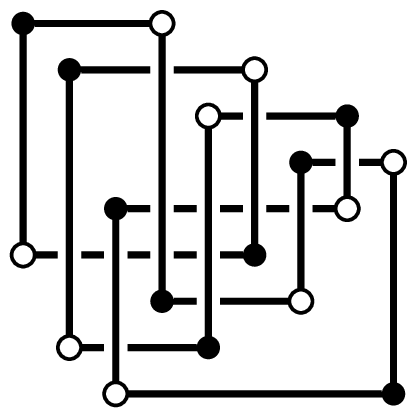}}
\end{tabular}
\caption{Diagrams of the knot $9_{47}$}\label{9_47-fig}
\end{figure}

\begin{figure}[ht]
\begin{tabular}{c|c|c}
$9_{49}$, $\mathbb Z_3$
&$\mathscr L_+\left(\raisebox{-14pt}{\includegraphics[scale=.16]{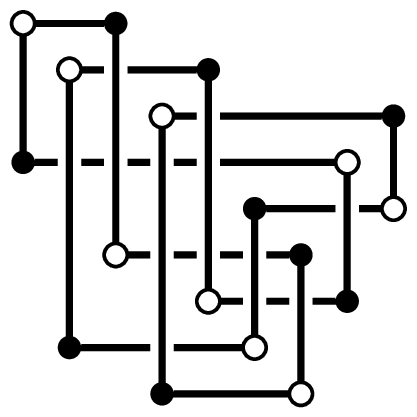}}\right)$
&$\mathscr L_+\left(\raisebox{-14pt}{\includegraphics[scale=.16]{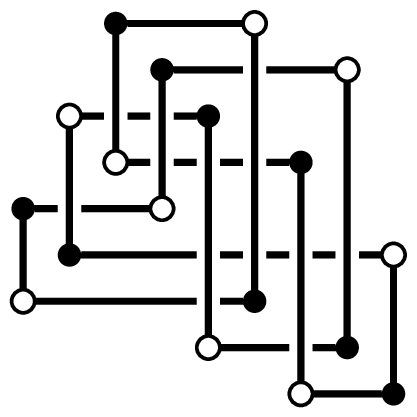}}\right)$\\\hline
$\mathscr L_-\left(\raisebox{-14pt}{\includegraphics[scale=.16]{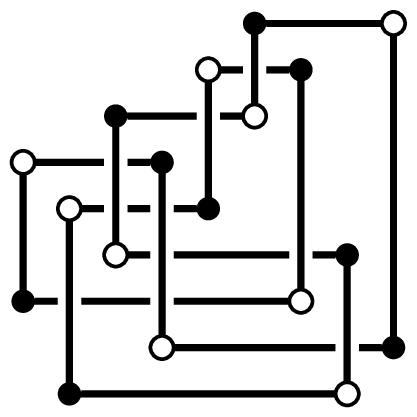}}\right)$
&\raisebox{-14pt}{\includegraphics[scale=.16]{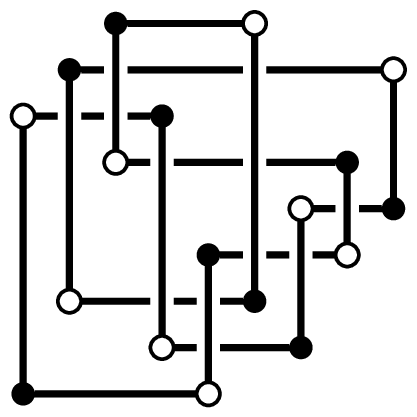}}
&\raisebox{-14pt}{\includegraphics[scale=.16]{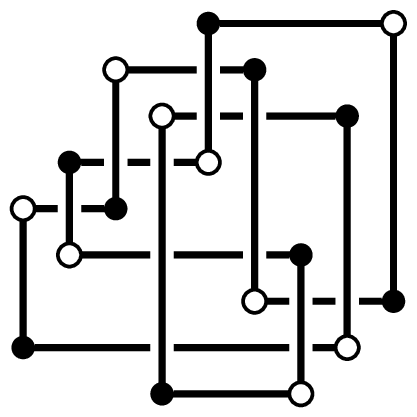}}
\end{tabular}
\caption{Diagrams of the knot $9_{49}$}\label{9_49-fig}
\end{figure}

\begin{figure}[ht]
\begin{tabular}{c|c|c}
$11n_{19}$, $\{1\}$
&$S_-\left(\mathscr L_+\left(\raisebox{-14pt}{\includegraphics[scale=.16]{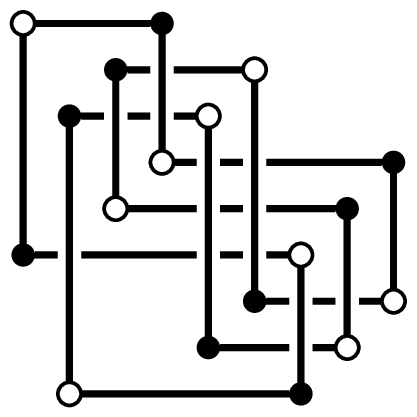}}\right)\right)$
&$S_+\left(\mathscr L_+\left(\raisebox{-14pt}{\includegraphics[scale=.16]{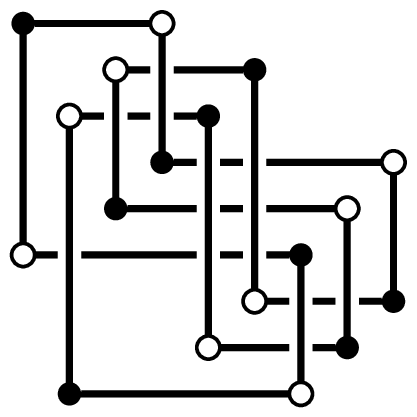}}\right)\right)$\\\hline
$\mathscr L_-\left(\raisebox{-14pt}{\includegraphics[scale=.16]{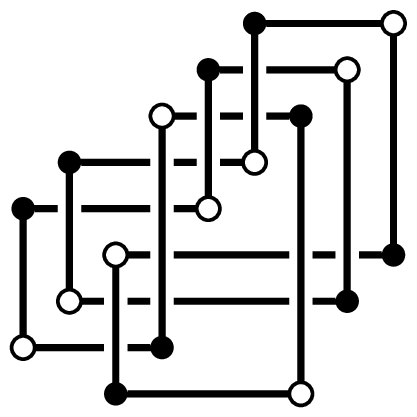}}\right)$
&\raisebox{-14pt}{\includegraphics[scale=.16]{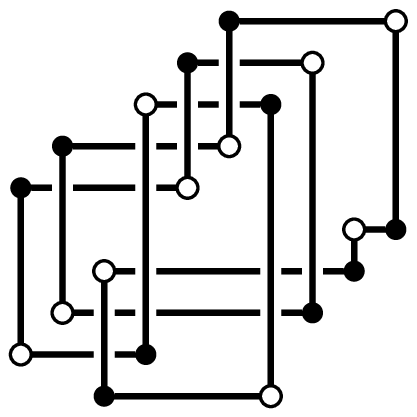}}
&\raisebox{-14pt}{\includegraphics[scale=.16]{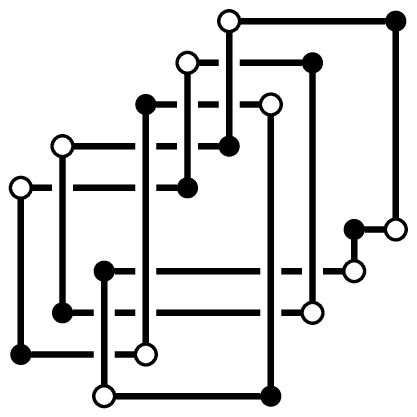}}
\\\hline
$\mathscr L_-\left(\raisebox{-14pt}{\includegraphics[scale=.16]{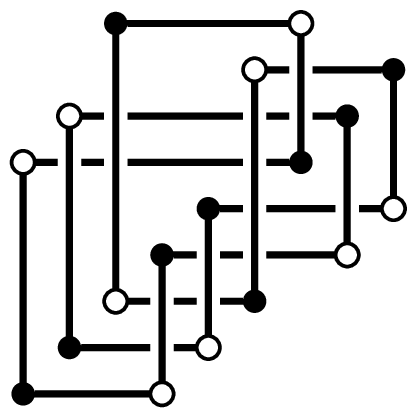}}\right)$
&\raisebox{-14pt}{\includegraphics[scale=.16]{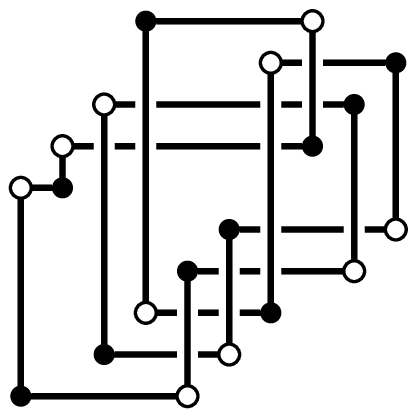}}
&\raisebox{-14pt}{\includegraphics[scale=.16]{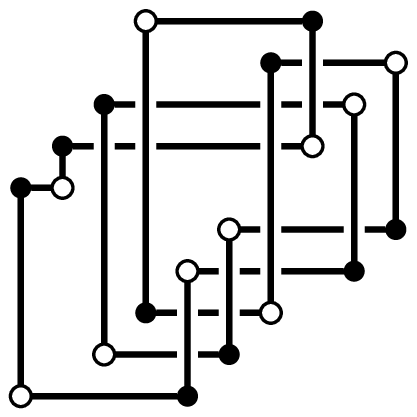}}
\end{tabular}

\includegraphics{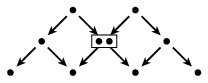}
\caption{Diagrams of the knot $11n_{19}$ and the corrected top part
of the corresponding $\xi_+$-Legendrian mountain range}\label{11n_19-fig}
\end{figure}

\begin{figure}[ht]
\begin{tabular}{c|c|c}
$11n_{38}$, $\{1\}$
&$\mathscr L_+\left(\raisebox{-14pt}{\includegraphics[scale=.16]{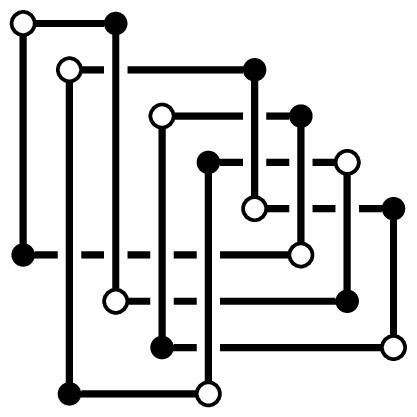}}\right)$
&$\mathscr L_+\left(\raisebox{-14pt}{\includegraphics[scale=.16]{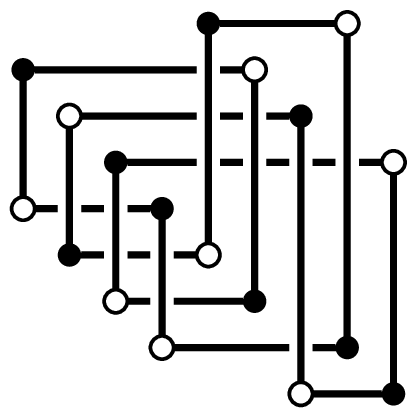}}\right)$\\\hline
$S_-\left(\mathscr L_-\left(\raisebox{-14pt}{\includegraphics[scale=.16]{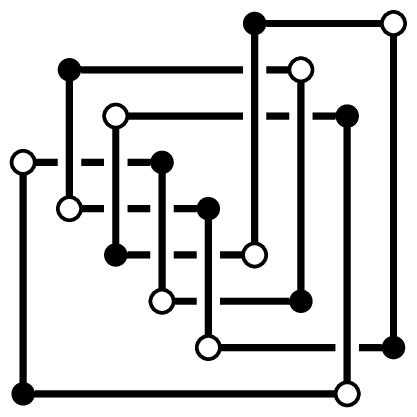}}\right)\right)$
&\raisebox{-14pt}{\includegraphics[scale=.16]{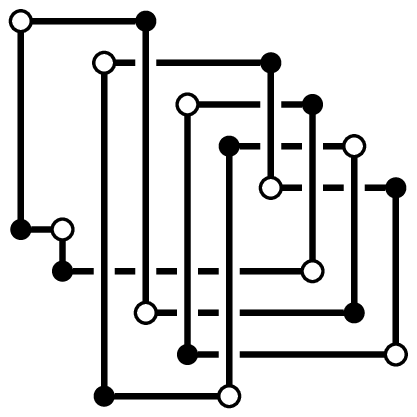}}
&\raisebox{-14pt}{\includegraphics[scale=.16]{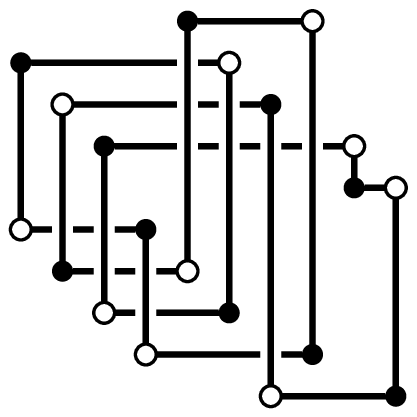}}\\\hline
$S_+\left(\mathscr L_-\left(\raisebox{-14pt}{\includegraphics[scale=.16]{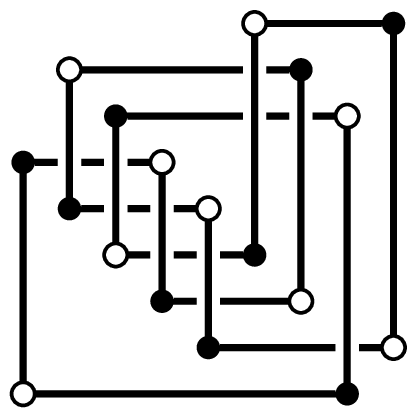}}\right)\right)$
&\raisebox{-14pt}{\includegraphics[scale=.16]{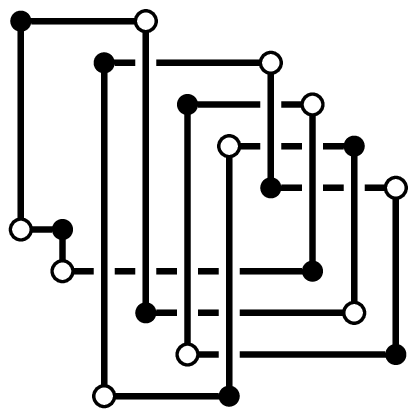}}
&\raisebox{-14pt}{\includegraphics[scale=.16]{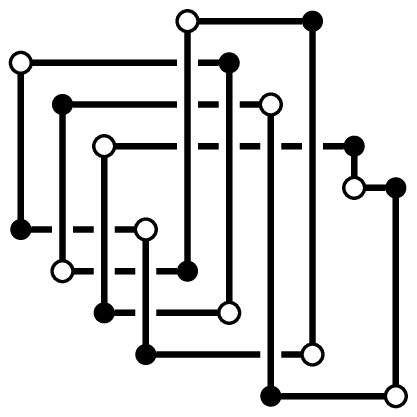}}
\end{tabular}
\caption{Diagrams of the knot $11n_{38}$}\label{11n_38-fig}
\end{figure}

\begin{figure}[ht]
\begin{tabular}{c|c|c}
$11n_{95}$, $\{1\}$
&$\mathscr L_+\left(\raisebox{-14pt}{\includegraphics[scale=.16]{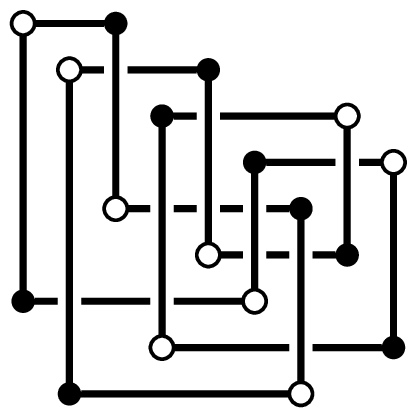}}\right)$
&$\mathscr L_+\left(\raisebox{-14pt}{\includegraphics[scale=.16]{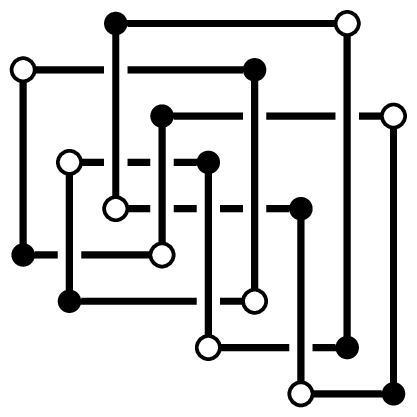}}\right)$\\\hline
$\mathscr L_-\left(\raisebox{-14pt}{\includegraphics[scale=.16]{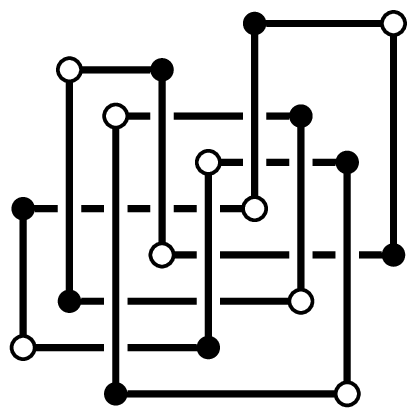}}\right)$
&\raisebox{-14pt}{\includegraphics[scale=.16]{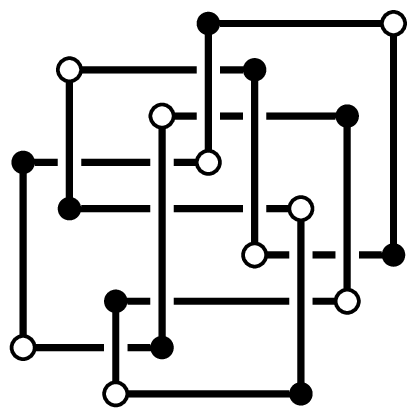}}
&\raisebox{-14pt}{\includegraphics[scale=.16]{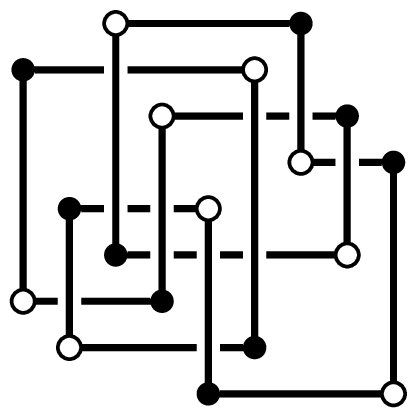}}
\\\hline
$\mathscr L_-\left(\raisebox{-14pt}{\includegraphics[scale=.16]{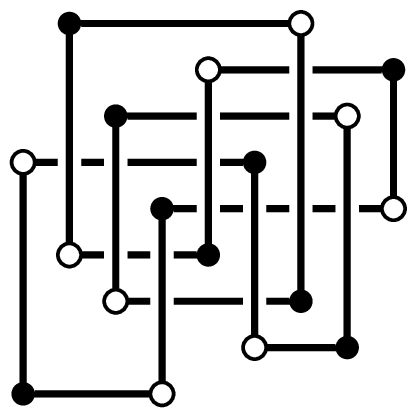}}\right)$
&\raisebox{-14pt}{\includegraphics[scale=.16]{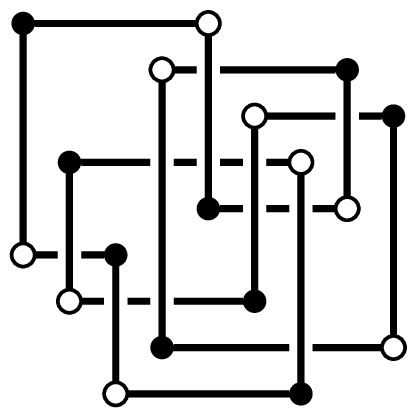}}
&\raisebox{-14pt}{\includegraphics[scale=.16]{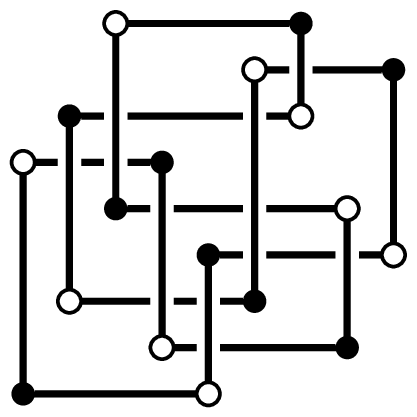}}
\\\hline
$\mathscr L_-\left(\raisebox{-14pt}{\includegraphics[scale=.16]{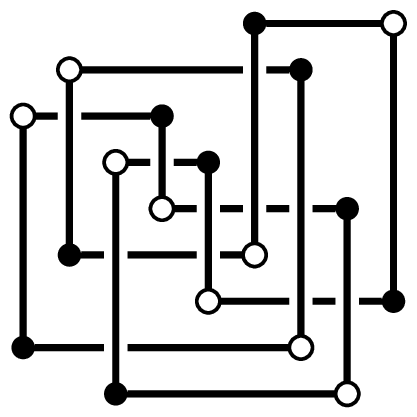}}\right)$
&\raisebox{-14pt}{\includegraphics[scale=.16]{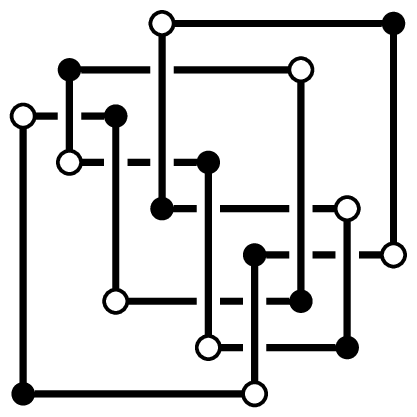}}
&\raisebox{-14pt}{\includegraphics[scale=.16]{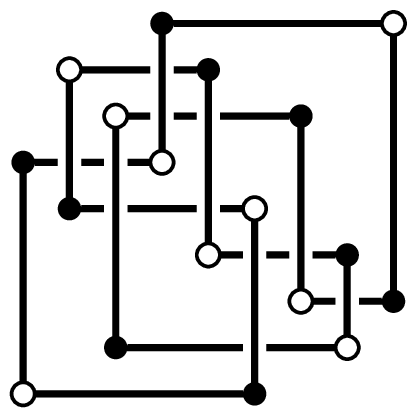}}
\end{tabular}
\caption{Diagrams of the knot $11n_{95}$}\label{11n_95-fig}
\end{figure}

\begin{figure}[ht]
\begin{tabular}{c|c|c}
$11n_{118}$, $\{1\}$
&$\mathscr L_+\left(\raisebox{-14pt}{\includegraphics[scale=.16]{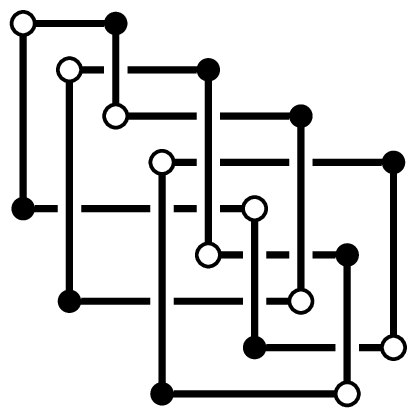}}\right)$
&$\mathscr L_+\left(\raisebox{-14pt}{\includegraphics[scale=.16]{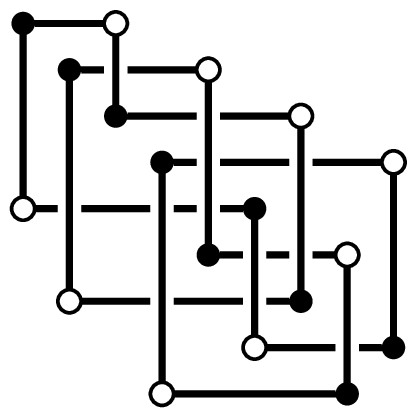}}\right)$\\\hline
$\mathscr L_-\left(\raisebox{-14pt}{\includegraphics[scale=.16]{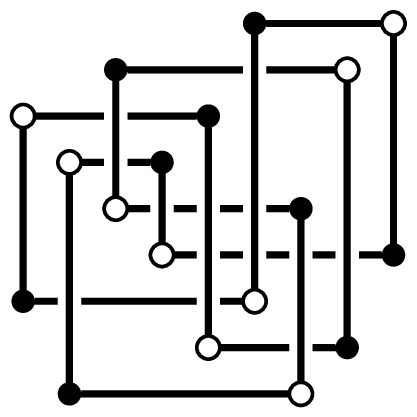}}\right)$
&\raisebox{-14pt}{\includegraphics[scale=.16]{11n_118-1.eps}}
&\raisebox{-14pt}{\includegraphics[scale=.16]{11n_118-2-m.eps}}
\end{tabular}
\caption{Diagrams of the knot $11n_{118}$}\label{11n_118-fig}
\end{figure}

\end{document}